\title{$K(1)$-local $K$-theory of Azumaya algebras}
\author{Maxime Ramzi }
\date{}
\newtheorem{thm}{Theorem}[section]
\newaliascnt{lm}{thm}  
\newtheorem{lm}[lm]{Lemma}
\Crefname{lm}{Lemma}{Lemmas}
\newaliascnt{prop}{thm}  
\newtheorem{prop}[prop]{Proposition}
\Crefname{prop}{Proposition}{Propositions}
\newaliascnt{cor}{thm}  
\newtheorem{cor}[cor]{Corollary}
\Crefname{cor}{Corollary}{Corollaries}
\newtheorem*{thm*}{Theorem}
\newtheorem*{cor*}{Corollary}
\newtheorem{thmx}{Theorem}
\theoremstyle{definition}
\newaliascnt{defn}{thm}  
\newtheorem{defn}[defn]{Definition}
\Crefname{defn}{Definition}{Definitions}
\newaliascnt{cons}{thm}  
\newtheorem{cons}[cons]{Construction}
\Crefname{cons}{Construction}{Constructions}
\newaliascnt{nota}{thm}  
\newtheorem{nota}[nota]{Notation}
\Crefname{nota}{Notation}{Notations}
\newaliascnt{conv}{thm}  
\Crefname{conv}{Convention}{Conventions}
\newaliascnt{ex}{thm}  
\newtheorem{ex}[ex]{Example}
\Crefname{ex}{Example}{Examples}
\newaliascnt{rmk}{thm}  
\newtheorem{rmk}[rmk]{Remark}
\Crefname{rmk}{Remark}{Remarks}
\newaliascnt{ques}{thm}  
\Crefname{ques}{Question}{Questions}
\newaliascnt{conj}{thm}  
\newtheorem{conj}[conj]{Conjecture}
\Crefname{conj}{Conjecture}{Conjectures}
\newaliascnt{warn}{thm}  
\newtheorem{warn}[warn]{Warning}
\Crefname{warn}{Warning}{Warnings}
\newaliascnt{obs}{thm}  
\newtheorem{obs}[obs]{Observation}
\Crefname{obs}{Observation}{Observations}
\newtheorem*{ques*}{Question}
\newtheorem*{rmk*}{Remark}
\newtheorem*{ex*}{Example}
\newtheorem*{defn*}{Definition}
\newaliascnt{recoll}{thm}  
\Crefname{recoll}{Recollection}{Recollections}
\newcommand{\op}{^{\mathrm{op}}}
\newcommand{\cat}{\mathrm}
\newcommand{\Cat}{\cat{Cat}}
\newcommand{\Catperf}{\mathrm{Cat}^{\mathrm{perf}}}
\newcommand{\on}{\operatorname}
\newcommand{\id}{\mathrm{id}}
\newcommand{\Fun}{\on{Fun}}
\newcommand{\Map}{\on{Map}}
\newcommand{\Z}{\mathbb Z}
\newcommand{\Sph}{\mathbb S}
\newcommand{\Ss}{\mathscr{S}}
\newcommand{\Sp}{\cat{Sp}}
\renewcommand{\O}{\mathcal{O}}
\newcommand{\PrL}{\mathrm{Pr}^\mathrm{L} }
\newcommand{\Alg}{\mathrm{Alg}}
\newcommand{\CAlg}{\mathrm{CAlg}}
\newcommand{\Mod}{\cat{Mod}}
\newcommand{\Perf}{\mathrm{Perf}}
\newcommand{\Ind}{\mathrm{Ind}}
\newcommand{\End}{\mathrm{End}}
\newcommand{\Idem}{\mathrm{Idem}}
\newcommand{\colim}{\mathrm{colim}}
\newcommand{\Psh}{\cat{Psh}}
\newcommand{\KO}{\mathrm{KO}}
\newcommand{\KU}{\mathrm{KU}}
\newcommand{\Br}{\mathrm{Br}}
\newcommand{\Pic}{\mathrm{Pic}}
\newcommand{\BBr}{\mathbf{Br}}
\newcommand{\PPic}{\mathbf{Pic}}
\newcommand{\Gm}{\mathbb{G}_m}
\newcommand{\Gpic}{\mathbb{G}_{\mathrm{pic}}}
\newcommand{\et}{\mathrm{\'et}}
\newcommand{\Mot}{\mathrm{Mot}}
\newcommand{\Sh}{\mathrm{Sh}}
\newcommand{\udl}[1]{\underline{#1}}
\newcommand{\QCoh}{\mathrm{QCoh}}
\newcommand{\Aut}{\mathrm{Aut}}
\newcommand{\U}{\mathcal{U}}
\newcommand{\Spec}{\mathrm{Spec}}
\newcommand{\one}{\mathbbm{1}}
\newcommand{\V}{\mathcal V}
\newcommand{\st}{\mathrm{st}}
\newcommand{\coker}{\mathrm{coker}}
\newcommand{\Gal}{\mathrm{Gal}}
\begin{document}

\maketitle
\begin{abstract}
We compute certain strict Picard spectra of $K(1)$-local $K$-theory spectra of schemes in terms of Brauer groups, using the map that takes an Azumaya algebra to its $K(1)$-local $K$-theory and proving a Künneth formula in that setting. For example, we prove that for semi-local rings of characteristic $\neq p$, $\BBr(R)[p^\infty]\simeq \Gpic(L_{K(1)}K(R)\otimes\Sph_{W(\overline{\mathbb F}_p)})[p^\infty]$, where $\Gpic$ is Carmeli's strict Picard spectrum. We prove the same result for $R[\frac{1}{p}]$, when $R$ is $p$-henselian.   
\end{abstract}
\tableofcontents
\newpage
\section*{Introduction}
\subsection*{Background}
The group of units of a commutative ring $R$ is typically very different from its additive group. This difference becomes more stringent when one allows $R$ to be a commutative ring \emph{spectrum}, where the underlying spectrum of $R$ and its spectrum of units $\mathfrak{gl}_1(R)$ are of vastly different nature. To see an example of this, one need look no further than the initial commutative ring spectrum, the sphere spectrum $\Sph$, where $\Sph$ receives no interesting map from (connective) complex topological $K$-theory, $\mathrm{ku}$, while $\mathfrak{gl}_1(\Sph)$ receives (up to a shift) the $J$-homomorphism, a very important map in homotopy theory. 

One is therefore naturally lead to the study of \emph{spectra} of units of commutative ring spectra. One convenient way to approximate this study is to sudy instead the spectrum of \emph{strict units} of $R$: 
\begin{defn*}
    The connective spectrum of strict units of a commutative ring spectrum $R$ is defined as the following connective mapping spectrum: $$\Gm(R):=\Map_{\Sp^{\mathrm{cn}}}(\mathbb Z, \mathrm{gl}_1(R))$$
\end{defn*}
This object, and variants thereof, is under increasing scrutiny, see e.g. \cite{Shacharstrict}, \cite{ChroNS} and \cite{shacharkiran}. Among other things, it partly governs ``flat algebraic geometry'' over $R$, since it is equivalent to maps of commutative $R$-algebras from the flat Laurent polynomials ring $R[t^{\pm 1}]$ to $R$. 

In this paper, we study the spectrum of strict units of a large class of height $1$ commutative ring spectra, namely the $K(1)$-local $K$-theory of (classical) schemes. Work of Thomason \cite{thomason1985algebraic} (see also \cite{CM}) shows that this $K(1)$-local $K$-theory is closely related to étale cohomology, and this is perhaps the main source of examples of commutative ring spectra of height $1$.

Since $K$-theory is some kind of decategorification process, a natural guess is that the (strict) units of the ($K(1)$-local) $K$-theory of a scheme $X$ has something to do with the Picard group of that scheme, and we will prove that there is in fact a tight relationship between the two - at the level of $\pi_0$, this relationship is very classical, for example for Dedking domains where $K_0(R)\cong \mathbb Z \oplus \Pic(R)$.

More generally, the study of $\Gm$ itself leads to the study of a canonical delooping thereof, namely the spectrum of strict picard elements, $\Gpic$, defined similarly, see \cite{Shacharstrict}. In that case, the ``decategorification philosophy'' mentioned above suggests that the (strict) picard elements of the ($K(1)$-local) $K$-theory of a scheme $X$ has something to do with the \emph{Brauer group} of that scheme, and it is in fact this relationship that most of this paper is devoted to. 
%The Brauer group of a scheme $X$, $\Br(X)$, is a natural categorification of its Picard group, $\Pic(X)$, itself a natural categorification of the group of units $\Gm(X)$. When studying those, one is naturally led to wonder how these categorifications interact with natural decategorification processes, such as $K$-theory (in some sense the \emph{ur}-decategorification). 

%Perhaps the most basic nontrivial instance of this phenomenon is the fact that for schemes, the Picard group splits off of $K_0(X)$ (as a set, say) via the determinant, which shows that the process of taking the $K$-theory class of a line bundle is a lossless process. In fact, $K_0(X)$ is sometimes very close to $\Pic(X)$.

%In \cite{TvdB1, TvdB2,Tab1,Tab2}, Tabuada--Van den Bergh and Tabuada study a similar phenomenon one category level up, that is, they study how lossy taking the \emph{motive} of a Brauer class can be. They prove various interesting injectivity results, which were recalled in the companion paper \cite{companionMot}, where we pursued their line of investigation and proved similar injectivity results, at least for the ones concerning the Picard group of motives. 

%In this paper, we continue this investigation, from a similar lens as in \cite{companionMot}, that is, through a study the local behaviour of Azumaya algebras, but with different techniques. The techniques here are less soft and rely on recent advances in chromatic homotopy theory and algebraic $K$-theory, but they also provide somewhat more precise results.

To state our precise results, we start with a (too good to be true!) dream and add some symbols in order to, slowly but surely, arrive at a correct statement. 

Since (derived\footnote{Our Brauer groups and Azumaya algebras are implicitly derived throughout.}) Brauer classes over $X$ are exactly invertible objects in $\Catperf_X$, the $\infty$-category of $X$-linear stable $\infty$-categories, also known as quasi-coherent stacks on $X$, one is led to hope that taking their $K$-theory would yield invertible modules over $K(X)$, establishing the aforementioned connection between the Brauer group of $X$ and the Picard group of the commutative ring spectrum $K(X)$. This would look like a map $$``\Br(X)\to \Pic(K(X))"$$
If the world were perfect, this group morphism would be an isomorphism, or not far from it, up to understandable phenomena. 

Unfortunately, none of this works. For Azumaya algebras $A,A'$, even over a field, one typically does not have $K(A)\otimes_{K(X)}K(A')\simeq K(A\otimes_{\O_X}A')$\footnote{In fact, this is almost never the case - if $K(X)$ is connective, having this property for $A$ and $A\op$ implies that $A$ is trivial in the Brauer group, cf. \cite[Corollary 2.22]{companionMot} }. So we do not even have a group morphism, let alone a near-isomorphism! There are however at least two ways to make a Künneth-type formula like this one true - we start by describing the first one. If the prime $p$ is invertible in the base $X$, $K(1)$-localization tends to make Künneth formulas ``more true''. For example we will see that when $X$ is the spectrum of a semi-local ring\footnote{Or the $p$-inverted $p$-henselization of a commutative ring.}, the above formula always holds for Azumaya algebras which are $p$-primary torsion in the Brauer group. If this is true, the above construction gives a morphism from the $p$-primary torsion part of the Brauer group of $X$ to the ($K(1)$-local) Picard group of $L_{K(1)}K(X)$, variants of which have been studied a fair amount. One may then again hope that the morphism $$\Br(X)[p^\infty]\to \Pic(L_{K(1)}K(X))$$
is injective. This fails for an even sillier reason: if the Künneth formula holds, then since $A\op$ is Morita inverse to $A$ and $K(A\op)\simeq K(A)$ as $K(X)$-modules, $L_{K(1)}K(A)$ is forced to be a $2$-torsion Picard element! For example, if $p$ is odd, this forces it to be trivial: our morphism is not interesting. 
\begin{rmk*}
    When $p=2$, we \emph{can} actually get interesing Picard elements this way. For example, over $\mathbb R$ and at the prime $2$, the quaternion algebra $\mathbb H$ satisfies $L_{K(1)}K(\mathbb H)\simeq \Sigma^4 \KO_2$ which is the nontrivial $2$-torsion element in $$\Pic(L_{K(1)}K(\mathbb R))= \Pic(\KO_2)\cong \Z/8.$$ 
\end{rmk*}
Note, however, that we used the \emph{property} that $A$ is $p$-primary torsion in the Brauer group instead of remembering that as extra data. Indeed, the Brauer group of a scheme is better viewed as $\pi_0$ of a more natural Brauer space or spectrum $\BBr(X)$, in which there is room for homotopies. Similarly, the Picard group $\Pic(L_{K(1)}K(X))$ is $\pi_0$ of the more natural Picard space or spectrum $\PPic(L_{K(1)}K(X))$ (which is used to define the strict Picard spectrum!). For a connective spectrum $E$, we let $E[p^\infty]$ denote the (connective, homotopy) fiber of the map $E\to E[\frac{1}{p}]$ - this consists of points in $E$ equipped with a nullhomotopy of one of their $p$-power powers. So, at least when $X$ is the spectrum of a semi-local ring, we actually obtain a map $$\BBr(X)[p^\infty]\to\PPic(L_{K(1))}K(X))[p^\infty]$$

We remark that in \cite{Toen}, Toën shows that $\pi_0(\BBr(X)[p^\infty])$ is given by the étale cohomology group $H^2_\et(X,\mu_{p^\infty})$ up to a summand $H^0_\et(X,\Z/p^\infty)$ with little geometric meaning\footnote{This summand is there to observe that $\O_X$ is itself $p^n$-torsion in many ways, namely by suspensions. In particular it does not have much to do with Brauer groups, so on top of it making some statements false, it is mostly irrelevant.}. In fact, the whole space can be described as $\BBr(X)[p^\infty]\simeq \Gamma_\et(X,B^2\mu_{p^\infty})\times H^0_\et(X,\Z/p^\infty)$. We will \emph{ignore} this second factor, and for the purposes of this paper, use the following notation: 
\begin{nota}\label{conv:Br}
   Instead of the usual derived Brauer sheaf which is equivalent to $B^2\Gm\times B\Z$, we let $\BBr(-):= B^2\Gm$, so $\BBr(X) = \Gamma_\et(X,B^2\Gm)$. 
   
   In particular, 
    
    $\BBr(-)[p^\infty]= B^2\mu_{p^\infty} =\Gamma_\et(-,B^2\mu_{p^\infty})$ (this is a subspace of $E[p^\infty]$, for $E$ the actual derived Brauer space).
\end{nota}
With this notation, one thing we will prove is that the above map is in fact injective on $\pi_0$ (and a variant with a smaller source will be true for more general $X$). 

However, we should not stop here : since $\mu_{p^\infty}$ is a sheaf of ordinary abelian groups, this cohomology space has more structure than a mere spectrum: all its elements are \emph{strict}. More precisely, it is a $\Z$-module spectrum (equivalently, can be considered as an object of the derived category $D(\Z)$). Thus, our map actually admits a refinement to $\Gpic := \Map_{\Sp^{\mathrm{cn}}}(\Z,\PPic)$, the strict Picard spectrum mentioned earlier. That is, we find a map $\BBr(X)[p^\infty]\to \Gpic(L_{K(1)}K(X))[p^\infty]$. We have now put a considerable more amount of structure on $L_{K(1)}K(A)$ and are allowed to hope for an isomorphism statement. It turns out that this is not quite right, as one can see by looking at algebraically closed fields: the issue is that the $K(1)$-local $K$-theory of algebraically closed fields is not itself exactly algebraically closed: it is $p$-complete topological $K$-theory, as opposed to the Lubin--Tate theory associated to the formal group $(\overline{\mathbb F}_p,\widehat{\Gm})$. One can fix this by tensoring our commutative ring spectra with the spherical Witt vectors $\Sph_{W(\overline{\mathbb F}_p)}$. 

Our first main theorem is that in general, all of this works after looping once, but over a semi-local ring or the $p$-inversion of a $p$-henselian ring, this whole story actually pans out: 
\begin{thmx}[{\Cref{thm:generalblahG}, \Cref{cor:perfectoid}, \Cref{cor:semilocal}}]\label{thm:BrwithK}
    Let $X$ be a qcqs scheme such that $p\in\Gm(X)$\footnote{Here and throughout this introduction, $p$ denotes a fixed prime number. $K(1)$- and $T(n)$-localizations are to be understood with respect to this implicit prime.}. The canonical map $$\PPic(X)[p^\infty]\to \Gm(L_{K(1)}K(X)\otimes\Sph_{W(\overline{\mathbb F}_p)})[p^\infty]$$ is an equivalence. 

    If $X$ is the spectrum of a semi-local ring, or $X$ is the spectrum of $R[\frac{1}{p}]$ for some $p$-henselian commutative ring $R$, this map deloops to an equivalence $$\BBr(X)[p^\infty]\to \Gpic(L_{K(1)}K(X)\otimes\Sph_{W(\overline{\mathbb F}_p)})[p^\infty]$$
    which is given by $A\mapsto L_{K(1)}K(A)$. More generally, there is a subspace $\widetilde{\BBr}(X)[p^\infty]\subset~\BBr(X)[p^\infty]$, to be defined later\footnote{Which, for a semi-local ring, or a ring of the form $R[\frac{1}{p}]$ with $R$ $p$-henselian, agrees with $\BBr(X)[p^\infty]$}, and a map described as above which is an inclusion of components $$\widetilde{\BBr}(X)[p^\infty]\to\Gpic(L_{K(1)}K(X)\otimes\Sph_{W(\overline{\mathbb F}_p)})[p^\infty] $$ 
\end{thmx}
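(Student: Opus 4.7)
The plan is to prove the statement in three stages: the $\Gm$-level equivalence for qcqs $X$ with $p$ invertible, the $\BBr$-level deloop over a field, and finally the $\widetilde{\BBr}$-inclusion in full generality.

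For the first statement, I would proceed by étale descent. The functor $\PPic(-)[p^\infty]$ is an étale hypersheaf on $p$-inverted schemes by \Cref{conv:Br} and Toën's identification of the $p$-primary derived Picard with $\mu_{p^\infty}$-cohomology. On the right, $K(1)$-local algebraic $K$-theory is an étale hypersheaf on schemes with $p$ invertible (Thomason, Mitchell, Clausen--Mathew--Naumann--Noel); the operations of base-changing to $\Sph_{W(\overline{\mathbb F_p})}$ and passing to $p$-primary strict units preserve this property. It therefore suffices to check the equivalence on étale stalks, reducing to the case $X=\Spec F$ for $F$ a separably closed field of characteristic $\neq p$. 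In that case both sides should be accessible by direct computation: the source is (a shift of) $\mu_{p^\infty}(F)=\mathbb{Q}_p/\Z_p$, and the $W(\overline{\mathbb F_p})$-base change of $L_{K(1)}K(F)$ should be a transparent form of $p$-completed $\KU$, whose $p$-primary strict units can be computed by hand.

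To deloop to $\BBr$ when $X=\Spec F$ is a field, I would combine the $\Gm$-equivalence with the observation that both $\BBr(-)[p^\infty]$ and $\Gpic(-)[p^\infty]$ are once-deloopings (in the strict, i.e.\ $\Z$-module, sense) of the $\PPic$- and $\Gm$-spectra already compared. The remaining content is to produce the actual decategorification map realizing $A\mapsto L_{K(1)}K(A)$. The key input is the $K(1)$-local Künneth formula
$$L_{K(1)}(K(A)\otimes_{K(F)}K(A'))\simeq L_{K(1)}K(A\otimes_F A')$$
for Azumaya algebras $A,A'$ over $F$ (after the $W(\overline{\mathbb F_p})$-extension). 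Once this holds, $A\mapsto L_{K(1)}K(A)$ is multiplicative on Brauer classes, and the power operations $A\mapsto A^{\otimes n}$ upgrade the assignment to a morphism of strict $p$-primary spectra; equivalence then drops out from the first part after applying $\Map(\Z,-)$ and looping once more.

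For a general qcqs $X$, I would define $\widetilde{\BBr}(X)[p^\infty]$ as the union of components of $\BBr(X)[p^\infty]$ for which the Künneth formula above holds after the $W(\overline{\mathbb F_p})$-extension. The map is then defined on this subspace by construction, and to see it is an inclusion of components I would identify the fibre at a basepoint with a once-looped version of the map from the first part, which was already shown to be an equivalence. \textbf{The main obstacle} is establishing the $K(1)$-local Künneth formula for Azumaya algebras in as much generality as possible: it is both the technical engine making the decategorification multiplicative and what controls the size of $\widetilde{\BBr}$. I expect this to be approached via a direct study of $L_{K(1)}K(\Perf_A)$ in its natural $(K(1)$-local$)$ endofunctor category after $W(\overline{\mathbb F_p})$-extension, combined with descent from a separable closure (where every Azumaya algebra is trivial and the Künneth formula is tautological). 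For fields, this descent should suffice to force $\widetilde{\BBr}=\BBr$, while in general it applies only on a subspace, explaining why the theorem can only claim a componentwise inclusion.
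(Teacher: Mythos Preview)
Your overall architecture matches the paper's, but there is a genuine gap in the reduction to stalks. You write that after observing both sides are \'etale (hyper)sheaves, ``it therefore suffices to check the equivalence on \'etale stalks, reducing to the case $X=\Spec F$ for $F$ a separably closed field.'' The issue is that the stalk of the presheaf $U\mapsto \Gm(L_{K(1)}K(U)\otimes\Sph_{W(\overline{\mathbb F_p})})[p^\infty]$ at a geometric point $\overline{x}$ is the filtered colimit $\colim_{U}\Gm(L_{K(1)}K(U)\otimes\Sph_{W(\overline{\mathbb F_p})})[p^\infty]$, and there is no reason for this to coincide with $\Gm$ of the colimit, i.e.\ with $\Gm(L_{K(1)}K(\O_{X,\overline{x}})\otimes\Sph_{W(\overline{\mathbb F_p})})[p^\infty]$. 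The functor $R\mapsto \Gm(R)[p^\infty]$ on $K(1)$-local commutative ring spectra is not obviously finitary; in fact the paper spends all of \Cref{section:Galois} proving the weaker \emph{subfinitariness} statement (\Cref{thm:finitary}) via the chromatic Fourier transform and a careful analysis of compactness of Galois extensions. Only after that, combined with the \emph{connectedness} of the value at separably closed fields (from \cite[Theorem H]{ChroNS}), does the paper conclude that the stalk agrees with the evaluation at the residue field. You have skipped this entirely.

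A second, related point: at the $\Gpic$-level the target presheaf is not known to be a sheaf, only a \emph{separated} presheaf (\Cref{cor:Gmsubsheaf}). The paper therefore does not argue by ``both sides are sheaves, check on stalks,'' but rather by showing the map to the \emph{sheafification} of the target is an equivalence, and then invoking \Cref{lm:subsheaf} to get that the unsheafified target sits between source and sheafification as an inclusion of components. Your delooping step (``equivalence then drops out from the first part after applying $\Map(\Z,-)$ and looping once more'') does not work as stated: knowing $\Omega f$ is an equivalence does not make $f$ one, and the $\pi_0$ content is precisely what is handled by this separated-presheaf/sheafification argument. Finally, for the K\"unneth formula over fields you gesture at descent from a separable closure, but the actual mechanism in the paper is Merkurjev--Suslin (every $p$-primary class is a product of cyclic algebras, hence split by a $p$-group Galois cover) combined with $K(1)$-local Galois descent from \cite{CMNN}; generic descent from the separable closure does not give this.
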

\begin{rmk}\label{rmk:ZactSphW}
    Here, $\Sph_{W(\overline{\mathbb F}_p)}$ denotes the spherical Witt vectors of the field $\overline{\mathbb F}_p$, see e.g. \cite[Section 2.1]{ChroNS} or Lurie's original account \cite[Section 5.2]{Ell2}. As explained above, they act here as a sort of ``renormalization'' to ``fix'' the fact that $L_{K(1)}K(\mathbb C)$ is not ``algebraically closed''.

    Note that this $\mathbb E_\infty$-ring spectrum has a ($p$-completely) locally unipotent $\Z$-action by a spherical Witt lift of the Frobenius automorphism, which induces a descent equivalence between $p$-complete modules over it with a unipotent Frobenius semi-linear automorphism and $p$-complete spectra. In particular, if one wants to remove it, one has to take $\Z$-fixed points of both sides of our map. However, by design, the action on the source of our map is trivial, so that one finds that without the spherical Witt vectors term, the $\Gpic$ in question (say in the case of a local ring) is simply $$\BBr(X)[p^\infty]^{S^1}\simeq \BBr(X)[p^\infty]\oplus\PPic(X)[p^\infty].$$ 
\end{rmk}

\begin{rmk*}
    We will soon see that $\Gpic(L_{K(1)}K(X)\otimes\Sph_{W(\overline{\mathbb F}_p)})[p^\infty]$ is always \emph{some} subspace of $\BBr(X)[p^\infty]$, equivariantly with respect to the unipotent $\Z$-action described above. 
\end{rmk*}
\begin{rmk*}
    The assumption that $p\in\Gm(X)$ in the above theorem is in some sense unavoidable: $K(1)$-local $K$-theory is insensitive to inverting $p$, see \cite[Theorem 1.1]{BCM}. 
\end{rmk*}
We provide some examples of applications of this theorem. First, we use it to equip $\Sigma^4\KO_2$ with a strict $2$-torsion structure by observing that $L_{K(1)}K(\mathbb H)\simeq \Sigma^4\KO_2$, recovering an unpublished calculation of Antieau. We also deduce from this the \emph{non-existence} of a spectral lift of the ``Clifford algebra'' morphism from the Witt group of $\mathbb R$ to its Brauer--Wall group: 
\begin{cor*}[{\Cref{cor:nofact}}]
    There exists no map of spectra $L(\mathbb R)\to \mathbf{BW}(\mathbb R)$ which, on $\pi_0$, recovers the assignment $(V,q)\mapsto \mathrm{Cl}(V,q)$. 
\end{cor*}
Here, $\mathbf{BW}(\mathbb R)$ is the Brauer--Wall spectrum, which is to the Brauer--Wall group what the Brauer spectrum is to the Brauer group.

Second, we use our result to compute $\Gm(K(R))[p^\infty]$ for various rings $R$, such as $p$-inverted rings of integers in number fields. In that context, we have (see \Cref{section:numberring} for a slightly more general statement):
\begin{cor*}[\Cref{cor:GmLQ}]
     Let $R=\O_F[\frac{1}{p}]$ or some further localization thereof, where $F$ is a number field. 

     In this case, there is an equivalence $$\Gm(K(R))[p^\infty]\simeq \PPic(R)[p^\infty]\oplus \Gm(R)[p^\infty]$$
\end{cor*}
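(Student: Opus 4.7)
The plan is to apply \Cref{thm:BrwithK} to $X=\Spec R$, eliminate the spherical Witt factor via Frobenius descent, and then pass from $L_{K(1)}K(R)$ back to $K(R)$ on the $[p^\infty]$-torsion of $\Gm$ via Quillen--Lichtenbaum.

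First, since $p\in\Gm(R)$, \Cref{thm:BrwithK} furnishes an equivalence
$$\PPic(R)[p^\infty] \xrightarrow{\sim} \Gm\bigl(L_{K(1)}K(R)\otimes\Sph_{W(\overline{\mathbb F_p})}\bigr)[p^\infty].$$
Both sides carry compatible $\Z$-actions: trivial on the source, and induced by the spherical-Witt Frobenius on the target. Taking $\Z$-homotopy fixed points, and invoking the Frobenius descent recorded in \Cref{rmk:ZactSphW} together with the fact that $\Gm$ and $(-)[p^\infty]$ commute with the relevant limits, the target becomes $\Gm(L_{K(1)}K(R))[p^\infty]$. On the source, since $B\Z\simeq S^1$ and the action is trivial, the $\Z$-fixed points split as $\PPic(R)[p^\infty]\oplus\Omega\PPic(R)[p^\infty] = \PPic(R)[p^\infty]\oplus\Gm(R)[p^\infty]$. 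Hence
$$\Gm(L_{K(1)}K(R))[p^\infty] \simeq \PPic(R)[p^\infty] \oplus \Gm(R)[p^\infty].$$

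Next, I would show that the canonical map $\Gm(K(R))[p^\infty] \to \Gm(L_{K(1)}K(R))[p^\infty]$ is an equivalence. The plan is to factor through $p$-completion: by Quillen's finite-generation theorem for number rings, $K(R)$ has finitely generated homotopy groups, so one expects $\Gm(K(R))[p^\infty] \simeq \Gm(K(R)^\wedge_p)[p^\infty]$, the $\Z[\tfrac{1}{p}]$-local part of the unit spectrum contributing no $p$-power torsion. Then for $R = \O_F[\tfrac{1}{p}]$ (or a further localization), the Quillen--Lichtenbaum theorem of Voevodsky--Rost, together with the work of Rognes--Weibel, identifies $K(R)^\wedge_p$ with $L_{K(1)}K(R)$ in non-negative degrees, which suffices because $\Gm$ only reads off $\pi_0^\times$ and positive-degree homotopy.

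The main obstacle I anticipate is this second step: one must carefully justify that $\Gm(-)[p^\infty]$ interacts correctly with $p$-completion (using finite generation of $K_*(\O_F[\tfrac{1}{p}])$) and that the non-negative-degree Quillen--Lichtenbaum equivalence translates to a genuine equivalence on strict $p^\infty$-torsion units. Once these technical points are verified, combining them with the first step yields the claimed splitting.
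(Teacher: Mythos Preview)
Your overall strategy is correct and matches the paper's: apply \Cref{thm:BrwithK}, remove the spherical Witt factor via $\Z$-fixed points, and then pass from $K(1)$-local $K$-theory back to $K(R)$ using Quillen--Lichtenbaum plus arithmetic input. The first paragraph of your argument is essentially \Cref{cor:GmKp} in the paper.

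The paper resolves your ``main obstacle'' somewhat differently than you sketch, and the order of operations matters. Two points:

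\emph{First}, the paper performs the Quillen--Lichtenbaum reduction \emph{before} removing the spherical Witt factor. With the $\Sph_{W(\overline{\mathbb F_p})}$ still present, one has a factorization
\[
\PPic(R)[p^\infty]\;\longrightarrow\;\Gm\bigl(K(R)_p\otimes\Sph_{W(\overline{\mathbb F_p})}\bigr)[p^\infty]\;\longrightarrow\;\Gm\bigl(L_{K(1)}K(R)\otimes\Sph_{W(\overline{\mathbb F_p})}\bigr)[p^\infty]
\]
in which the composite is an equivalence by \Cref{thm:BrwithK} and the second arrow is an inclusion of components (since weak LQ dimension $\leq 2$ makes $K(R)_p\to L_{K(1)}K(R)$ an inclusion of components, a property preserved by tensoring with $\Sph_{W(\overline{\mathbb F_p})}$ and by $\Gm(-)[p^\infty]$). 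Hence both arrows are equivalences. If instead you take $\Z$-fixed points first, you must show directly that $\Gm(K(R)_p)[p^\infty]\to\Gm(L_{K(1)}K(R))[p^\infty]$ is \emph{surjective} on $\pi_0$, and ``agrees in non-negative degrees'' only gives injectivity: the map on $\pi_0$ of the ring spectra need not be surjective.

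\emph{Second}, for the passage from $K(R)_p$ back to $K(R)$, the paper does not argue via finite generation alone. It uses the arithmetic fracture square together with the lemma that $\Gm(A[\tfrac{1}{p}])[p^\infty]$ depends only on $\pi_0(A)$ (\Cref{lm:primetop}), and then an explicit computation of $K_0(R)$ and its $p$-completion to check that the map on the $p$-inverted corners induces an isomorphism on $\mu_{p^\infty}$. Finite generation of $K_*(R)$ is implicit in this (finiteness of the class group, vanishing of $K_{-1}$), but the actual mechanism is the fracture square plus the $\pi_0$-lemma rather than a direct appeal to finite generation.
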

 As already mentioned, part of \Cref{thm:BrwithK} in the case of a field is the statement that $p$-power torsion Brauer classes satisfy a Künneth theorem. In full generality, we are unable to prove this, but we have the following result:
\begin{thmx}[\Cref{cor:AzKünneth}]\label{thm:Kintro}
    Let $X$ be a qcqs scheme such that $p\in\Gm(X)$. Suppose $A$ is an Azumaya algebra over $X$ such that after pullback to $X\times_{\Spec(\Z[\frac{1}{p}])}\Spec(\Z[\frac{1}{p},\zeta_{p^\infty}])$, $A$ is split by a $G$-Galois cover, for some $p$-group $G$. 

    For any $X$-linear category $C$, the Künneth map $$K(A)\otimes_{K(X)}K(C)\to K(\Perf(A)\otimes_{\Perf(X)}C)$$ is a $K(1)$-local equivalence. 
\end{thmx}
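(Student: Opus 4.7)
The plan is to reduce, via two successive descent steps for $K(1)$-local $K$-theory, to the trivial case where $A$ is Morita-equivalent to $\O_X$.

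I would first base-change along the pro-étale cover $X':=X\times_{\Spec\Z[\frac{1}{p}]}\Spec\Z[\frac{1}{p},\zeta_{p^\infty}]\to X$, which has pro-cyclic Galois group (a quotient of) $\Z_p^\times$. The key input is that $L_{K(1)}K$ satisfies continuous descent along this cover---essentially the content behind the spherical Witt factor in \Cref{thm:BrwithK}, and a consequence of the Devinatz--Hopkins-style identification $L_{K(1)}K(\Z[\frac{1}{p},\zeta_{p^\infty}])\simeq \Sph_{K(1)}$ equipped with its standard $\Z_p^\times$-action. Since $\Perf(A)$ is smooth and proper over $X$, $K(A)$ is $K(1)$-locally dualizable over $K(X)$, so the external tensor product $K(A)\otimes_{K(X)}K(C)$ also respects this descent. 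The question thereby reduces to the analogous Künneth assertion over $X'$.

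On $X'$, let $\pi\colon Y\to X'$ be the finite $G$-Galois cover splitting $A$, with $G$ a $p$-group. By Clausen--Mathew's theorem that $L_{K(1)}K$ satisfies $G$-Galois descent for finite $p$-groups, both $L_{K(1)}(K(A)\otimes_{K(X')}K(C))$ and $L_{K(1)}K(\Perf(A)\otimes_{\Perf(X')}C)$ are totalizations of their cosimplicial Čech base-changes along $\pi$. It therefore suffices to verify the Künneth equivalence after base-change to each $Y^{\times_{X'}(\bullet+1)}$. But on every such term $Z$, $A$ is Morita-trivial: $\Perf(A_Z)\simeq \Perf(Z)$ as $Z$-linear categories, and $K(A_Z)\simeq K(Z)$ as $K(Z)$-modules. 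The Künneth map then becomes the tautological equivalence $K(Z)\otimes_{K(Z)}K(C_Z)\simeq K(C_Z)$.

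The principal obstacle is the first, continuous, descent step: since $X'\to X$ is of infinite degree, descent must be phrased as a continuous limit against the profinite group $\Z_p^\times$, and one must ensure that the external tensor product $K(A)\otimes_{K(X)}K(C)$---which does not a priori commute with arbitrary limits---respects this particular limit after $K(1)$-localization. This hinges on the dualizability of $K(A)$ over $K(X)$ being preserved by the relevant base changes, which in turn relies on the smoothness and properness of $\Perf(A)$ as an $X$-linear category. The finite $G$-Galois step is by contrast largely formal once Clausen--Mathew's descent theorem is invoked.
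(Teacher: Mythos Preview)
Your first reduction step contains a circularity. You assert that $K(A)$ is $K(1)$-locally dualizable over $K(X)$ because $\Perf(A)$ is smooth and proper over $X$, and then use this to push the tensor product $K(A)\otimes_{K(X)}K(C)$ through the continuous descent limit. But smooth-properness only gives dualizability of the \emph{motive} $\U^X(\Perf(A))$; the functor $L_{K(1)}K:\Mot_X\to\Mod_{L_{K(1)}K(X)}$ is merely lax symmetric monoidal, and for a lax monoidal $F$, dualizability of $F(x)$ with dual $F(x^\vee)$ is equivalent to the K\"unneth map $F(x)\otimes F(x^\vee)\to F(x\otimes x^\vee)$ being an equivalence (this is exactly \Cref{lm:globalKfromlocalK}). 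So knowing $K(A)$ is dualizable over $K(X)$ is essentially the statement you are trying to prove, and you cannot use it as an input.

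The paper sidesteps this by a different and simpler first reduction: rather than attempting profinite descent along $X'\to X$ \emph{relative to} $K(X)$, one tensors the K\"unneth map with $\KU$ \emph{over the sphere}. Since $\KU\otimes(-)$ is $K(1)$-locally conservative, it suffices to check the map after this tensoring, and \cite[Theorems 1.4 and 3.11]{BCM} give $K(D)\otimes\KU\simeq K(D\otimes_{\Perf(X)}\Perf(X[\zeta_{p^\infty}]))$ for every $X$-linear $D$. This converts both sides into the K\"unneth map over $X'$ with no appeal to dualizability or continuous fixed points.

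Your second step is close to the paper's, though organized differently. The paper does not run \v{C}ech descent along $Y\to X'$; instead it writes $K(A)\simeq K(Y)^{hG}$ for a twisted $G$-action (by \cite[Theorem B]{CMNN}), and likewise for $K(\Perf(A)\otimes_{\Perf(X')}C)$, so the K\"unneth map becomes the $G$-fixed points of the Galois K\"unneth map $K(Y)\otimes_{K(X')}K(C)\to K(\Perf(Y)\otimes_{\Perf(X')}C)$. That map is shown to be an equivalence in \Cref{lm:GaloisKünneth} by observing it is one after taking $G$-fixed points and that $(-)^{hG}$ is conservative on $\Sp_{K(1)}^{BG}$ by $K(1)$-local affineness of $BG$. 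Your \v{C}ech argument implicitly uses this same Galois K\"unneth lemma when you identify the base-changed terms over each $Y^{\times_{X'}(\bullet+1)}$ with the $K$-theory of the base-changed categories; once you make that explicit, the two arguments are equivalent.
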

The subspace spanned by those Azumaya algebras satisfying this result is what we called $\widetilde{\BBr}(X)[p^\infty]$. 
\begin{ex*}
    By the Merkurjev-Suslin theorem \cite[Corollary, pp.158]{MS}, resp. its extension to semilocal rings \cite{hoobler}, the hypotheses of the above theorem are satisfied for any $p$-power torsion Brauer class over a field, resp. over a semilocal ring.
    \end{ex*}
\begin{ex*}
    Another source of examples comes from the theory of perfectoid rings, cf. \Cref{cor:perfectoid}: whenever $R$ is $p$-henselian, Azumaya algebras over $R[\frac{1}{p}]$ also satisfy the assumptions of the above theorem.
\end{ex*}
Removing the symbol ``$\mathbb G$'' but keeping the torsion part, we lose some of the tools needed to establish \Cref{thm:BrwithK}. On the level of $\PPic(X)$, though, we nonetheless get the following injectivity result:
\begin{thmx}[\Cref{thm:nostricttext}]\label{thm:nostrict}
    Let $X$ be a qcqs scheme such that $p\in\Gm(X)$. The canonical map $$\PPic(X)[p^\infty]\to GL_1(L_{K(1)}K(X))[p^\infty]$$ is injective on $\pi_0$. 
\end{thmx}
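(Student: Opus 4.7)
The plan is to derive \Cref{thm:nostrict} from \Cref{thm:BrwithK} by factoring the canonical map through strict units.

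Since $\PPic(X)$ of a scheme carries a canonical $\mathbb{Z}$-module spectrum structure (for example, via its étale-cohomological description as $\Gamma_\et(X, B\Gm)$), the canonical map $\PPic(X)[p^\infty] \to GL_1(L_{K(1)}K(X))[p^\infty]$ factors through the strict units as
\[
\PPic(X)[p^\infty] \xrightarrow{\alpha} \Gm(L_{K(1)}K(X))[p^\infty] \xrightarrow{\gamma} GL_1(L_{K(1)}K(X))[p^\infty],
\]
where $\Gm(R) := \map_\Sp(\mathbb{Z}, GL_1(R))$. Post-composing $\alpha$ with base-change along the unit $L_{K(1)}K(X)\to L_{K(1)}K(X)\otimes\Sph_{W(\overline{\mathbb F_p})}$ recovers the equivalence of \Cref{thm:BrwithK}, so $\alpha$ is split injective; in particular $\pi_0\alpha$ is injective, and $\pi_0\PPic(X)[p^\infty]$ is identified with a subgroup of $\pi_0\Gm(L_{K(1)}K(X))[p^\infty]$.

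It remains to check that composing with $\pi_0\gamma$ retains injectivity on the image of $\pi_0\alpha$. Both source and target are computable via étale descent when $p\in\Gm(X)$: the source by standard descent for $B\Gm$ combined with Kummer theory, the target by the Thomason--Clausen--Mathew étale-descent equivalence for $L_{K(1)}K$. The map can then be analysed via a comparison of descent spectral sequences, with the image of $\pi_0\alpha$ landing in a controlled filtration piece coming from $H^1_\et(X,\mu_{p^\infty})$.

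The main obstacle is the last step: ruling out that any differentials or extensions in the descent spectral sequence computing $\pi_0 GL_1(L_{K(1)}K(X))[p^\infty]$ could identify distinct Picard classes. This should follow from the sparse structure of the descent spectral sequence for $L_{K(1)}K$ on $X[1/p]$ (which ultimately reflects the sparse homotopy of $\KU^\wedge_p$) together with compatibility of the Kummer-theoretic classes with the descent filtration, but this compatibility is the substantive input of the argument and will require careful bookkeeping.
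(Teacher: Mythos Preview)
Your detour through strict units and \Cref{thm:BrwithK} is circular: once you know $\alpha$ is split injective, you still need $\gamma$ to be injective on the image of $\alpha$, and since $\gamma\circ\alpha$ is exactly the map in question, you have not reduced the problem at all. The entire content of the theorem is therefore packed into your ``main obstacle'' paragraph, which you leave as a vague plan involving descent spectral sequences and an unspecified compatibility check. In other words, the proposal identifies where the work is but does none of it.

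The paper's proof avoids this altogether and does \emph{not} go through \Cref{thm:BrwithK} or strict units (indeed the paper remarks that, mysteriously, this theorem works better without the spherical Witt vectors). Instead, one considers the map of étale presheaves $\PPic[p^\infty]\to GL_1(L_{K(1)}K(-))[p^\infty]$ and postcomposes with the $1$-truncation $\mathcal H$ of the target. A short cohomological lemma says that for any $1$-truncated sheaf of spectra $\mathcal F$, the map $\Sigma\pi_1\mathcal F\to\mathcal F$ is injective on $\pi_0$ of global sections; this replaces your entire spectral-sequence analysis in one stroke. One is then reduced to checking that $\PPic[p^\infty]\to\mathcal H$ is a $\pi_1$-isomorphism on stalks. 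Since $\pi_1(GL_1(R)[p^\infty])\cong\pi_1(R[p^\infty])$ is finitary in $R$, the stalk is computed by evaluating at a separably closed field (via Gabber rigidity), where the map becomes $\mu_{p^\infty}(F)\to\pi_1(GL_1(\KU_p)[p^\infty])$, an isomorphism by Snaith's theorem or the $n=0$ case of \Cref{thm:Picshift}.
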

\begin{rmk*}
   We note that in this theorem, the proof works better \emph{without} the spherical Witt vectors of \Cref{thm:BrwithK}.
\end{rmk*}
Now, if one wants a result that goes beyond the examples provided above (or more generally beyond the hypotheses of \Cref{thm:Kintro}) in \Cref{thm:BrwithK}, one should either try to generalize \Cref{thm:Kintro} (which the author has not succeeded in doing), or give up on having the nice concrete description ``$A\mapsto L_{K(1)}K(A)$''. In particular, a second way of enforcing Künneth type formulas is to work étale locally: Azumaya algebras are étale locally trivial, so that they trivially satisfy a ``local Künneth formula''. 

Thus, in a slightly less interesting way, because it does not include a Künneth type statement but only a verification on stalks, we also obtain the following: 
\begin{thmx}[\Cref{thm:Sh}]\label{thm:BrPicSh}
    Let $X$ be a qcqs scheme such that $p\in\Gm(X)$. The canonical map $$\BBr(X)[p^\infty]\to \Gpic(\Mod_{\mathcal K}(\Sh(X_\et, \Mod_{\Sph_{W(\overline{\mathbb F}_p)}}(\Sp_{K(1)}))))[p^\infty]$$ is an equivalence, where $\mathcal K$ is the étale sheaf given by $K(1)$-local $K$-theory. 
\end{thmx}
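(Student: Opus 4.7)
First I would identify both sides of the claimed map as global sections of étale sheaves of spectra on $X_\et$, reducing the theorem to a stalkwise check at geometric points. For the source, this is immediate from \Cref{conv:Br}, which \emph{defines} $\BBr(-)[p^\infty]$ as $\Gamma_\et(-,B^2\mu_{p^\infty})$. For the target, the idea is to build an étale sheaf of spectra $\udl{\Gpic}$ on $X_\et$ sending $U\to X$ to $\Gpic(L_{K(1)}K(U)\otimes \Sph_{W(\overline{\mathbb F_p})})[p^\infty]$, and to use étale descent for invertible modules over the sheaf of $\mathbb E_\infty$-rings $\mathcal K$ to identify the target of our map with $\Gamma_\et(X,\udl{\Gpic})$. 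Concretely, this combines faithfully-flat descent for $\PPic$ with hyperdescent for $K(1)$-local $K$-theory on $X_\et$ (known for qcqs $X$ with $p$ invertible), noting that $\Gpic=\Map(\Z,\PPic)$ preserves limits.

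Having reduced to an equivalence of étale sheaves of spectra, I then check it on stalks at a geometric point $\bar x\to X$ with strict henselization $\O^{\mathrm{sh}}_{X,\bar x}$ and separably closed residue field $\kappa$. On the source, classical rigidity of étale cohomology with $\mu_{p^\infty}$-coefficients (applicable since $p$ is invertible) yields $$\BBr(\O^{\mathrm{sh}}_{X,\bar x})[p^\infty] \simeq \BBr(\kappa)[p^\infty].$$ On the target, Gabber--Suslin rigidity for $K(1)$-local $K$-theory gives $L_{K(1)}K(\O^{\mathrm{sh}}_{X,\bar x}) \simeq L_{K(1)}K(\kappa)$, hence the same identification after tensoring with $\Sph_{W(\overline{\mathbb F_p})}$ and forming $\Gpic$. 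The stalkwise comparison thus reduces to the field case $$\BBr(\kappa)[p^\infty] \to \Gpic(L_{K(1)}K(\kappa)\otimes \Sph_{W(\overline{\mathbb F_p})})[p^\infty],$$ which is an equivalence by \Cref{thm:BrwithK} applied to $\kappa$.

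The main obstacle is not the stalkwise input, which \Cref{thm:BrwithK} delivers directly, but the sheaf-theoretic packaging of the first step: making precise the identification of $\Gpic$ of the global $\infty$-category of $\mathcal K$-modules with global sections of the pointwise $\Gpic$-sheaf, and verifying enough hyperdescent to pass to stalks. One has to exploit that restricting to $[p^\infty]$ enforces $p$-completeness (so truncations behave well) and that $K(1)$-local $K$-theory satisfies étale hyperdescent on qcqs schemes with $p$ invertible. Once that bookkeeping is in place, the proof is a clean local-to-global reduction to \Cref{thm:BrwithK}.
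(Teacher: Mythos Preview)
Your overall strategy—view both sides as global sections of étale sheaves and check on stalks—matches the paper's, but there are two genuine gaps.

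First, your presheaf $\udl{\Gpic}\colon U \mapsto \Gpic(L_{K(1)}K(U)\otimes \Sph_{W(\overline{\mathbb F_p})})[p^\infty]$ is \emph{not} a sheaf (the paper shows it is only separated, \Cref{cor:Gmsubsheaf}), and its value at $X$ is \emph{not} the target $\Gpic(\Mod_{\mathcal K}(\Sh(X_\et,\ldots)))[p^\infty]$. Modules over the ring $L_{K(1)}K(U)$ in $\Sp_{K(1)}$ form a genuinely smaller category than $\mathcal K$-module \emph{sheaves} on $U_\et$; this is the usual distinction between modules over global sections and quasicoherent sheaves, and ``descent for invertible modules'' does not bridge it. The paper instead uses the presheaf $U \mapsto \Gpic(\Mod_{\mathcal K|_U}(\Sh(U_\et,\ldots)))[p^\infty]$, which \emph{is} a sheaf because module categories over a sheaf of rings satisfy descent and $\Gpic$ preserves limits. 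The display immediately after \Cref{thm:Sh} makes the strict inclusion $\Gpic(L_{K(1)}K(X)\otimes\Sph_{W(\overline{\mathbb F_p})})[p^\infty]\subset \Gpic(\Mod_\mathcal K(\Sh(X_\et,\ldots)))[p^\infty]$ explicit and remarks that equality is only conjectural.

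Second, and more seriously, you do not justify why the stalk at $\bar x$ can be computed as $\Gpic$ of the stalk ring. The stalk is by definition a filtered colimit over étale neighbourhoods, and $\Gpic$ does \emph{not} commute with filtered colimits (see the remark following \Cref{thm:finitary} for an explicit counterexample). Gabber rigidity only identifies the colimit of the \emph{rings} (or categories) with $\KU_p\otimes\Sph_{W(\overline{\mathbb F_p})}$; pushing $\Gpic(-)[p^\infty]$ through that colimit is precisely the content of the subfinitariness result \Cref{cor:catfinitary}, combined with the observation that $\Gpic(\KU_p\otimes\Sph_{W(\overline{\mathbb F_p})})[p^\infty]$ is connected so that the a priori monomorphism is forced to be an equivalence. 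This is not bookkeeping: the paper devotes all of \Cref{section:Galois} to establishing it via the chromatic Fourier transform and a careful analysis of compactness of Galois extensions. Your appeal to ``truncations behave well'' and hyperdescent does not supply this ingredient.
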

We note that unlike in the previous discussion, here $K(1)$-localizing was simply not needed to enforce the desired Künneth type statement, but it is needed to have the correct local picture.

While this result is less interesting because of the trivial enforcing of the Künneth formula, this construction lends itself better to an analysis of the local behaviour of the map $B^2\mu_{p^\infty}\to GL_1(L_{K(1)}K(-))[p^\infty]$. We thus obtain the following categorified version of \Cref{thm:nostrict}: 
\begin{thmx}[\Cref{thm:nostrictBrtext}]\label{thm:nostrictBr}
    Let $X$ be a qcqs scheme such that $p\in\Gm(X)$. The canonical map $$\BBr(X)[p^\infty]\to \PPic(\Mod_{\mathcal K}(\Sh(X_\et, \Sp_{K(1)})))[p^\infty]$$ is injective on $\pi_0$. In particular, the restricted map $$\widetilde{\BBr}(X)[p^\infty]\to \PPic(L_{K(1)}K(X))[p^\infty]$$ is also injective on $\pi_0$.
\end{thmx}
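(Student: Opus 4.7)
The strategy parallels that of \Cref{thm:nostrict}, lifted one categorical level up: I analyze both sides via the étale descent spectral sequence on $X_\et$ and reduce injectivity to an explicit identification at the strictly Henselian stalks, where by Suslin rigidity and the hyperdescent of $L_{K(1)}K$-theory the sheaf $\mathcal K$ is simply $\KU_p$.

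The source $\BBr(X)[p^\infty] = \Gamma_\et(X, B^2\mu_{p^\infty})$ has a descent spectral sequence that degenerates immediately, since $B^2\mu_{p^\infty}$ has only $\pi_2 = \mu_{p^\infty}$ as a nontrivial homotopy sheaf; in particular $\pi_0$ of the source is identified with $H^2_\et(X, \mu_{p^\infty})$. For the target, the relevant descent spectral sequence has $E_2^{s,t} = H^s_\et(X, \pi_t \PPic(\mathcal K)[p^\infty])$. Using the Postnikov description $\pi_n \PPic(\KU_p) = \Z/2, \Z_p^\times, 0, \Z_p, 0, \Z_p, \dots$ together with the fiber sequence defining $[p^\infty]$, a direct computation yields $\pi_2 \PPic(\KU_p)[p^\infty] \cong \Z/p^\infty$, arising as the cokernel of $\pi_2 \KU_p = \Z_p \hookrightarrow \Q_p$, and this is canonically identified with $\mu_{p^\infty}$ via Bott periodicity together with the $K(1)$-local orientation. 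The natural map $B^2\mu_{p^\infty} \to \PPic(\mathcal K)[p^\infty]$ then induces precisely this iso on $\pi_2$-sheaves.

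By naturality of the spectral sequences and the absence of any differentials on the source side, the image of $H^2_\et(X, \mu_{p^\infty})$ consists of permanent cycles in the target, mapping isomorphically onto the $(s,t)=(2,2)$-subquotient of $\pi_0$. The main obstacle is ruling out trivializations coming from the filtration extensions on $\pi_0$ of the target, or hidden differentials landing in this subquotient from lower cohomological degree. This will be handled by appealing to \Cref{thm:BrPicSh}, which in the strict $\Gpic$-setting already provides a summand inclusion (after adjoining spherical Witt vectors). Pulling this rigidification back through the forgetful $\Gpic \to \PPic$ and the Frobenius fixed-point analysis of \Cref{rmk:ZactSphW} produces the needed retraction on the relevant subquotient, ensuring $\pi_0$-injectivity.

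Finally, the restricted injection $\widetilde{\BBr}(X)[p^\infty] \to \PPic(L_{K(1)}K(X))[p^\infty]$ follows by factorization through global sections: by \Cref{thm:Kintro}, Azumaya algebras in $\widetilde{\BBr}(X)[p^\infty]$ have the property that $K(\Perf(A))$ is a genuinely invertible $L_{K(1)}K(X)$-module globally, not merely an invertible sheaf of $\mathcal K$-modules on $X_\et$. Consequently, the map factors as $\widetilde{\BBr}(X)[p^\infty] \to \PPic(L_{K(1)}K(X))[p^\infty] \to \PPic(\Mod_{\mathcal K}(\Sh(X_\et, \Sp_{K(1)})))[p^\infty]$, and the $\pi_0$-injectivity of the first map is inherited from injectivity of the composition.
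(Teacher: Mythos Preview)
Your spectral-sequence setup is sound through the identification of the $\pi_2$-sheaves, but the crucial step---ruling out that classes in $H^2_\et(X,\mu_{p^\infty})$ are killed by differentials from lower filtration---is not actually carried out. Your appeal to \Cref{thm:BrPicSh} does not do this: that theorem concerns $\Gpic$ of a \emph{different} sheaf category (the one with $\Sph_{W(\overline{\mathbb F_p})}$-coefficients), and the forgetful map $\Gpic\to\PPic$ is not known to be injective on $\pi_0$ in general. Moreover, passing from the Witt-vector coefficients to plain $\Sp_{K(1)}$ via the Frobenius fixed points of \Cref{rmk:ZactSphW} changes the \emph{source} to $\BBr(X)[p^\infty]^{S^1}$, not $\BBr(X)[p^\infty]$, so you do not recover the injectivity you want. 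The phrase ``produces the needed retraction on the relevant subquotient'' hides precisely the content of the theorem.

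The paper's proof avoids this difficulty by working directly at the sheaf level rather than via a spectral sequence comparison. It first observes that the map lands in the \emph{locally trivial} Picard spectrum $\mathbf L\PPic\simeq\Gamma(X_\et,BGL_1(\mathcal K))$, and then constructs an explicit retraction of sheaves $BGL_1(\mathcal K)[p^\infty]\to B^2\mu_{p^\infty}$. Concretely, using $\tau_{\geq 0}(K^\et)_p\simeq \tau_{\geq 0}L_{K(1)}K$, one truncates $GL_1((K^\et)_p)[p^k]$ to its $1$-type and computes the limit over $n$ of $GL_1(K^\et/p^n)[p^k]_{\leq 1}$. For odd $p$ this limit is exactly $B\mu_{p^k}$ (since $\lim_n\mu_{p^k}(\Z/p^n)=0$), giving the retraction. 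At $p=2$ there is a residual constant sheaf $\underline{\Z/2}$ in $\pi_0$ of this $1$-type, and the paper shows the resulting boundary map $H^0_\et(X,\Z/2)\to H^2_\et(X,\mu_{2^k})$ vanishes by lifting classes through $GL_1(\Sph)$. Your deduction of the ``in particular'' clause from the main statement is correct.
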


To prove these statements, besides the story we have told so far, we rely on the key calculation of $\Gpic(\KU_p\otimes\otimes\Sph_{W(\overline{\mathbb F}_p)}) $ done in \cite{ChroNS} which corresponds to the special case of our results for separably closed fields. There, we see that taking $[p^\infty]$ yields exactly $\Sigma^2\Z/p^\infty$, and our results stem from the observation that this $\Z/p^\infty$ is actually $\mu_{p^\infty}$, and it is ``the same'' $\mu_{p^\infty}$ as the one appearing in the description of $\BBr[p^\infty]$.

A second key technical input we need is a way to access the stalks of the étale presheaf $U\mapsto \Gpic(L_{K(1)}K(U)\otimes\Sph_{W(\overline{\mathbb F}_p)})$, which we believe to be of independent interest. We also believe that a stronger statement can be made, but for now we make a weaker one for which we need a definition:
\begin{defn*}
    Let $F: C\to D$ be a functor between categories with filtered colimits. We say $F$ is \emph{subfinitary} is for every filtered colimit in $C$, the induced diagram $$\colim_I F(c_i)\to F(\colim_I c_i)$$ is a monomorphism. 
\end{defn*}
Our technical input is: 
\begin{thmx}[\Cref{thm:finitary}]\label{thm:subfinintro}
    Fix a height $n\geq 0$. For any torsion abelian group $H$, The functor $$\CAlg(\Sp_{T(n)})\to D_{\geq 0}(\Z), R\mapsto \Map_\Z(\Sigma^{n-1} H,\Gpic(R))\simeq \Omega^{n-1}\Map_\Z(H,\Gpic(R))$$ is subfinitary. 
\end{thmx}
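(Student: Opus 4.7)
The plan is to reduce to the case $H = \Z/p^k$ a cyclic $p$-group and analyze the comparison map directly on homotopy groups. First, by tensor-hom adjunction (the right adjoint to $\Map_\Sp(\Z, -)\colon \Sp \to D(\Z)$ is the forgetful functor, so $\Map_\Z(M, \Gpic(R)) \simeq \Map_\Sp(M, \PPic(R))$ for any $M \in D(\Z)$), we identify
$$\Map_\Z(\Sigma^{n-1}H, \Gpic(R)) \simeq \Omega^{n-1}\Map_\Sp(H, \PPic(R)).$$
A torsion abelian group is a filtered colimit of its finite subgroups, themselves finite direct sums of cyclic $p$-primary groups. Since $\Map_\Sp(-, \PPic(R))$ converts these to cofiltered limits and finite products, and limits/products of monomorphisms in $D_{\geq 0}(\Z)$ remain monomorphisms, we reduce to the case $H = \Z/p^k$ (one must verify that the Beck--Chevalley exchange between the filtered $\colim_i$ over $R_i$'s and the cofiltered $\lim_j$ over finite subgroups of $H$ preserves monomorphisms in our setting; this is done directly on $\pi_\ast$).

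For $H = \Z/p^k$, $\Map_\Sp(H, \PPic(R)) \simeq \fib(p^k \colon \PPic(R) \to \PPic(R))$ is a finite limit of $\PPic(R)$. Since finite limits commute with filtered colimits of spectra, the comparison map $\colim_i F(R_i) \to F(\colim R_i)$ is obtained by applying $\Omega^{n-1}\fib(p^k,-)$ to
$$\varphi \colon \colim_i \PPic(R_i) \to \PPic(\colim_i R_i).$$
Recall $\pi_0\PPic(R) = \Pic(R)$, $\pi_1\PPic(R) = \Gm(R) = (\pi_0 R)^\times$, and $\pi_j\PPic(R) = \pi_{j-1}R$ for $j \geq 2$. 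Since $T(n)$-localization is smashing, filtered colimits in $\CAlg(\Sp_{T(n)})$ agree with those in $\Sp$ and therefore preserve $\pi_\ast$; in particular, $\pi_{\geq 1}\PPic$ is finitary in $R$. For $\pi_0\PPic = \Pic$, the compactness of invertible modules in $\Perf(R)$ combined with the continuity equivalence $\Perf(\colim R_i) \simeq \colim_i \Perf(R_i)$ in $\Catperf$ ensures that $\colim_i \Pic(R_i) \to \Pic(\colim_i R_i)$ is injective: any invertible module trivializing after base change to the colimit must already trivialize at some finite stage.

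Unravelling, $\pi_i F(R)$ is an extension of $\pi_{i+n-1}\PPic(R)[p^k]$ by $\pi_{i+n}\PPic(R)/p^k$. For $i \geq 1$ all indices fall in the finitary range, so $\pi_{\geq 1}$ of the comparison is an isomorphism. On $\pi_0$, for $n \geq 2$ the $\Pic$-contribution lies in negative degrees and is absent in $D_{\geq 0}(\Z)$; for $n = 1$ it enters as $\Pic[p^k]$, which is injective by left-exactness of $\Hom$ applied to the injection $\colim \Pic \hookrightarrow \Pic(\colim)$; for $n = 0$ it enters as $\Pic/p^k$, which requires the finer fact that $\Pic$ is actually finitary (following from the same compactness argument, also lifting inverses to a finite stage). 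Hence $\colim_i F(R_i) \to F(\colim R_i)$ is a monomorphism in $D_{\geq 0}(\Z)$, as required. The main subtlety is the Beck--Chevalley exchange in the reduction step and the low-degree bookkeeping of $\Pic$-contributions, particularly distinguishing mere injectivity of $\Pic$ from its full finitariness when $n \in \{0,1\}$.
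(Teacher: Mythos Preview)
Your argument has a fundamental gap: the claim that ``$T(n)$-localization is smashing, so filtered colimits in $\CAlg(\Sp_{T(n)})$ agree with those in $\Sp$'' is false for $n\geq 1$. The $T(n)$-local sphere is \emph{not} compact in $\Sp_{T(n)}$, so the functor $\pi_j(-)=\pi_0\Map_{\Sp_{T(n)}}(\Sph_{T(n)}[j],-)$ does \emph{not} preserve filtered colimits of $T(n)$-local ring spectra. Concretely, the colimit in $\Sp_{T(n)}$ of a filtered diagram $R_i$ is $L_{T(n)}(\colim_i^{\Sp} R_i)$, and the localization changes homotopy groups in general. The paper addresses exactly this difficulty: see the remark after \Cref{thm:finitary}, which exhibits the colimit of finite Galois extensions of $\Sph_{K(1)}$ with colimit $\KU_p$ as a witness that $\PPic(L_{K(1)}(-))$ (and even $\PPic[2]$ at $p=2$) fails to be finitary. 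Your $\Pic$ argument fails for the same reason: invertible objects in $\Mod_R(\Sp_{T(n)})$ need not be compact, and there is no continuity equivalence $\Perf(\colim R_i)\simeq\colim_i\Perf(R_i)$ in this setting.

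There is also a structural sign that the approach cannot work: after your first reduction you only use $\Map_\Sp(H,\PPic(R))$ and never the $\Z$-linear structure on $\Gpic$. But the paper's remark just cited shows that the non-strict statement $\PPic[p^k]$ is \emph{false} in general, so any correct proof must exploit strictness somewhere. The paper does so via the chromatic Fourier transform: after adjoining height-$n$ roots of unity, $\Map(\Sigma^{n-1}H,\PPic(R))$ is identified with the groupoid $\Gal_{BH^*}(R)$ of $BH^*$-Galois extensions, and subfinitariness is then deduced from compactness of Galois extensions in $\CAlg(\Sp_{T(n)})$. That compactness (\Cref{lm:idmcpct} and \Cref{prop:bootstrap}) is proved by a bootstrap through a dualizable $\mathbb E_4$-algebra $V$ with compact unit in $\Mod_V$, precisely to circumvent the non-compactness of $\one_{\Sp_{T(n)}}$ that breaks your direct homotopy-group computation.
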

\begin{rmk*}
Robert Burklund mentioned to the author that together with Schlank and Yuan, they proved (independently) a stronger version of this theorem, namely they prove finitary-ness instead of subfinitary-ness (that is, commutation with filtered colimits) for the whole of $\Map_\Z(H,\Gpic(R))$ and not its $(n-1)$-fold loop space. Their proof is more sophisticated and ours is more elementary, but only yields the above weaker result. This will however be sufficient for our purposes. 
\end{rmk*}
Finally, besides the value of each $\Gpic(E_n)$ from \cite{ChroNS} we will also need Ben-Moshe, Carmeli, Schlank and Yanovski's cyclotomic redshift \cite{Cyclored} to prove a stalk-version of our results that includes the verification that the map is the correct map: 
\begin{thmx}[\Cref{thm:Picshifttext}]\label{thm:Picshift}
Fix $n\geq 0$. 
    Let $E_n$ be any Lubin-Tate theory based on an algebraically closed field of characteristic $p$ if $n\geq 1$, or a separably closed field of characteristic $\neq p$ if $n=0$. For any map of commutative ring spectra $L_{K(n+1)}K(E_n)\to E_{n+1}$ to a height $(n+1)$ Lubin-Tate theory, also based on an algebraically closed field of characteristic $p$, the induced map $$\Gpic(E_n)[p^\infty]\to \Gm(L_{K(n+1)}K(E_n))[p^\infty]\to \Gm(E_{n+1})[p^\infty]$$  is an equivalence\footnote{Note that for $n=0$, $\Gpic(F)[p^\infty]\simeq \PPic(F)[p^\infty]$ for a separably closed field $F$, so in that case the theorem can be stated with strict or not strict Picard elements.}.
\end{thmx}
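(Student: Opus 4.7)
The plan is to identify both sides explicitly using the computations of \cite{ChroNS}, then to reduce the claim to a single non-triviality statement coming from cyclotomic redshift \cite{Cyclored}.

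First, I would apply \cite{ChroNS} at heights $n$ and $n+1$ to identify $\Gpic(E_n)[p^\infty] \simeq \Sigma^{n+1}\Z/p^\infty$ and $\Gpic(E_{n+1})[p^\infty] \simeq \Sigma^{n+2}\Z/p^\infty$ as $\Z$-module spectra (for $n=0$, the source is handled classically by $\Gpic(F)[p^\infty] \simeq \Sigma\mu_{p^\infty}(F) \simeq \Sigma\Z/p^\infty$ for a separably closed field $F$). Since $\Gm = \Omega\Gpic$ (as $(-)^{\geq 0}\Map(\Z,-)$ commutes with $\Omega$, and $\PPic = BGL_1$ in positive degrees), and since $\Omega$ commutes with $(-)[p^\infty]$, it follows that $\Gm(E_{n+1})[p^\infty] \simeq \Sigma^{n+1}\Z/p^\infty$.

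Second, the map in the statement is then a $\Z$-linear map between two copies of $\Sigma^{n+1}\Z/p^\infty$, hence classified by an element of $\End_\Z(\Z/p^\infty) \cong \Z_p$. Such an element is a unit---equivalently, the map is an equivalence---if and only if it is nonzero modulo $p$. Restricting to $p$-torsion, this reduces the proof to showing that the induced map $\Sigma^{n+1}\Z/p \to \Sigma^{n+1}\Z/p$ is nonzero.

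Third, to establish this non-triviality I would invoke cyclotomic redshift \cite{Cyclored}: the generator of $\pi_{n+1}\Gpic(E_n)[p]$ is a height-$n$ cyclotomic class, and the central result of \cite{Cyclored} is that such a class maps canonically under the $K$-theoretic functoriality to a height-$(n+1)$ cyclotomic class in $L_{K(n+1)}K(E_n)$, which is in turn identified with the corresponding generator of $\pi_{n+1}\Gm(E_{n+1})[p]$ under any Lubin--Tate map to $E_{n+1}$ (such a map being determined up to the $\Z_p^\times$-action by its universal property, which only affects the answer by a unit). The main obstacle will be verifying the compatibility between our map $\Gpic(E_n)[p^\infty] \to \Gm(L_{K(n+1)}K(E_n))[p^\infty]$---obtained from the canonical map $\PPic(R) \to GL_1(K(R))$ after $K(n+1)$-localization---and the cyclotomic redshift map of \cite{Cyclored} on the $p$-torsion subspectra. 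Both constructions are essentially canonical, so the identification should reduce to a tractable check at the level of generators, but this is where the technical heart of the argument lies.
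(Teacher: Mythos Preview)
Your proposal is correct and follows essentially the same strategy as the paper: identify both source and target with $\Sigma^{n+1}\Z/p^\infty$ via \cite{ChroNS}, and use cyclotomic redshift \cite{Cyclored} to verify that the map is the expected one. The paper organizes the compatibility check slightly more directly than your ``nonzero mod $p$'' reduction: rather than arguing abstractly that the map lies in $\End_\Z(\Z/p^\infty)\cong\Z_p$ and then checking a single generator, it precomposes with the explicit equivalence $\Sigma^{n+1}\Z/p^\infty\xrightarrow{\sim}\Gpic(E_n)[p^\infty]$ coming from the height-$n$ cyclotomic extension, and then invokes \cite[Lemma 4.27]{Cyclored} to exhibit a commutative square identifying the composite with the height-$(n+1)$ cyclotomic map into $\Gm(E_{n+1})[p^\infty]$, which is again an equivalence by \cite{ChroNS}. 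This bypasses your intermediate reduction to $p$-torsion, but the substance---and in particular what you flag as ``the technical heart''---is the same compatibility statement from \cite{Cyclored}.
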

\begin{rmk*}
    Taking the results of \cite{ChroNS} and \cite{Cyclored} for granted, the above theorem might be more accurately called an observation, cf. also its proof. We label it as a theorem given how central it is to our other developments, though as such it should be attributed to the authors of these works.
\end{rmk*}
\begin{rmk*}
In the case $n=0$, we have the remarkable feature that up to a mild $\Sph_{W(\overline{\mathbb F}_p)}$-factor, $L_{K(1)}K(F)$ is essentially already\footnote{The reader will excuse our abuse of notation here.}
 $E_1(\overline{\mathbb F}_p)$

 This obviously raises the following questions for general $n$: first, what is the term in the middle, and can it be described in terms of the target by descent; and second, does the above map deloop to an equivalence between a strict Brauer group of $E_n$ and $\Gpic(E_{n+1})$ ?
\end{rmk*}
\begin{rmk*}
    The above can equivalently be phrased with $p$-completion, or even $p$-localization instead of $[p^\infty]$, and would perhaps match the statement of \cite[Theorem H]{ChroNS} better this way. But with $[p^\infty]$ it fits better with our Brauer group story, and also makes some of the proofs easier since the key points in the proof of \textit{loc. cit.} are rather proved using the $[p^\infty]$ perspective. 
\end{rmk*}

Besides these technical inputs (that constitute the heart of this paper), our method of proof is similar to \cite{companionMot}, namely, we identify the local behaviour of the invariants in question, and use the following trick: let $\mathcal{F\to G}$ is a map of truncated étale presheaves of spectra on $X$ such that $\mathcal F$ is actually a sheaf. If the map induces an equivalence on stalks $\mathcal F_{\overline x}\to \mathcal G_{\overline{x}}$, then it admits a retraction. Indeed, it then follows that the map $\mathcal F\to \mathcal G\to \mathcal G^a$ to the sheafification of $\mathcal G$ is an equivalence.
\subsection*{Related works}
As already mentioned, our basic insight here is the same as in the companion paper \cite{companionMot}, which is however easier on the technical side - the reader is advised to view the former as some kind of light introduction to the present paper. 

The idea to study strict units and strict Picard groups of ring spectra goes back at least to Hopkins and Lurie's unpublished calculation of the strict units of $E$-theory; but was subsequently taken up by Carmeli in the context of ring spectra ``like the sphere spectrum'' \cite{Shacharstrict} and Burklund--Schlank--Yuan as a consequence of their chromatic Nullstellensatz in the context of $E$-theories \cite{ChroNS}. More recently, Carmeli--Luecke also studied strict unit groups of $K$-theory ring spectra \cite{shacharkiran}, and we will explain how to recover their calculation of $\Gm(K(\mathbb F_q))$ using our methods.

Our \Cref{thm:BrPicSh} also has a precursor in the work of Moulinos \cite{moulinos}, specifically he already constructs there the map that induces our equivalences upon taking strict $p$-power torsion elements at the level of étale sheaves. We $K(1)$-localize to impose étale descent but also to get the ``correct'' stalks. 

In the direction of the tools we use, Burklund mentioned to us that with Schlank and Yuan, they would prove a stronger version of \Cref{thm:finitary} in forthcoming work, with different applications in mind. 

Finally, as in \cite{companionMot}, we follow here the investigations of Tabuada and Van den Bergh \cite{TvdB1,TvdB2,Tab0,Tab1,Tab2} and this paper is greatly inspired by the questions raised there. 

\subsection*{Sectionwise outline}
In \Cref{section:strictE} we prove \Cref{thm:Picshift} by putting together results from \cite{ChroNS} and \cite{Cyclored}. 

In \Cref{section:Galois}, we use the chromatic Fourier transform to relate Galois theory and strict Picard spectra, and establish basic properties of Galois extensions in ``completed'' settings to prove \Cref{thm:subfinintro}. 

\Cref{section:main} is the ``main'' section of this article, where we prove the Künneth formula \Cref{thm:Kintro} and combine it with the previous results to obtain our ``Brauer results''. We first prove the results about strict structures that involve the Künneth formula that add up to \Cref{thm:BrwithK}, then move on to the sheafy results still involving strict structures, that is, \Cref{thm:BrPicSh}, and then finally forget about strictness altogether and prove \Cref{thm:nostrict} and \Cref{thm:nostrictBr}. 

%In \Cref{section:K2}, we take a small detour through secondary $K$-theory to prove \Cref{thm:K2}. 

Finally, in \Cref{section:ex} we study specific examples, namely the quaternion algebra $\mathbb H$ at the prime $2$ and other Cliffoed algebras over the reals, as well as number rings. 
\subsection*{Conventions}
We use the language of $\infty$-categories as extensively developed in \cite{HTT,HA}, though we drop the ``$\infty$-'' and simply refer to them as categories, and say ``$1$-category'' for ordinary categories when the need arises. In particular we use the following standard notation: 
\begin{itemize}
    \item $\CAlg$ and $\Alg_{\mathbb E_n}$ for commutative and $\mathbb E_n$-algebras respectively, in a symmetric monoidal ($\infty$-)category; 
    \item $\PrL$ for the category of presentable categories, $\PrL_\st$ for its stable variant and $\PrL_\omega$ for the category of compactly generated presentable categories and compact-preserving functors between them; 
    \item $\Ss$ for the category of spaces/anima/$\infty$-groupoids, ...;
    \item We use $\Map$ for mapping spaces; when the relevant category is stable or additive, we often also use its automatic commutative group/connective spectrum structure\footnote{In this article we do not use nonconnective mapping spectra.}.
\end{itemize}
We also use the following notation: 
\begin{itemize}
    \item For a (symmetric) monoidal category $C$, we let $\one_C$ denote its unit, $\one$ if $C$ is clear from context; 
    \item If $C$ as above is also cocomplete, we use $\one_C[-]$ to denote the unique colimit-preserving symmetric monoidal functor $\Ss\to C$, so that $\one_C[X]\simeq \colim_X \one_C$ and if $X$ has some operadic structure, so does $\one_C[X]$; 
    \item Dually, and if there is a risk for confusion\footnote{We are specifically worried about $F(G,\one_C)$, where the notation $\one_C^G$ could lead to confusion with fixed points.}, we use $F(-,\one_C)$ to denote the mapping object from a space into $\one_C$; 
    \item For a symmetric monoidal category $C$, we let $\PPic(C)$ denote the connective Picard spectrum of $C$, with the following two abbreviations: for a qcqs scheme $X$, we let $\PPic(X):=~\PPic(\Perf(X))$ and for a $T(n)$-local commutative ring spectrum $R$, $\PPic(R) := ~\PPic(\Mod_R(\Sp_{T(n)}))$; 
    \item For $C$ as above, we let $\Gpic(C):=\Map_{\Sp^\mathrm{cn}}(\Z,\PPic(C))$ as in \cite{Shacharstrict}, and $\Gm(C)= ~\Omega\Gpic(C)$, with similar abbreviations as above for $\Gpic(R), \Gm(R),\Gm(X)$.
    \item For a qcqs scheme $X$, we let $\Catperf_X:= \Mod_{\Perf(X)}(\Catperf)$ and call its objects ``$X$-linear categories''. It is equipped with the relative Lurie tensor product $\otimes_{\Perf(X)}$.
\end{itemize}
Our Azumaya algebras and quasicoherent sheaves are typically implicitly derived, so that $\QCoh,\PPic$ etc. by default refer to the derived versions of quasicoherent sheaves and line bundles. However our schemes are classical, and qcqs.  We use standard notations such as $\mu_n(X), \Gm(X)$ for schemes, except for $\BBr(X)$, as indicated in \Cref{conv:Br}, namely we let $\BBr(X):=\Gamma_\et(X,B^2\Gm)$ and we consider it with its implicit map to the usual derived Brauer space which one can define as $\PPic(\Catperf_X)$ (cf. \cite{Toen}). For the Picard space and the Brauer space, we let the non-bold version denote $\pi_0$, that is, $\Pic:=\pi_0\PPic, \Br:=\pi_0\BBr$.

There is an implicit prime $p$ fixed throughout, and all chromatic localizations $T(n)$ are with respect to that implicit prime.
%br vs bbr, pic vs ppic
\subsection*{Acknowledgements}
I wish to thank Ben Antieau for helfpul comments and feedback about this work, as well as for letting me include some of his calculations in \Cref{ex:quaternion}. I'm also grateful to Tom Bachmann, Tess Bouis, Robert Burklund, Shachar Carmeli\footnote{Who should also be thanked for the inspiration behind much of this work.}, Elden Elmanto, Eugen Hellmann, Marc Hoyois, Shai Keidar, Achim Krause, Vova Sosnilo, Sebastian Wolf and Mura Yakerson for interesting discussions related to the subject matter. Special thanks to Dustin Clausen and Akhil Mathew for suggesting \Cref{cor:perfectoid}, and particularly to Akhil Mathew for allowing me to include his proof.

As for the companion paper, this work arose after an inspiring talk given by Tabuada in the conference ``Recent developments in algebraic $K$-theory'' organized at the University of Warwick, and I am grateful to the organizers, Rudradip Biswas and Marco Schlichting, for putting together this event.

This research was funded by the Deutsche Forschungsgemeinschaft (DFG, German Research Foundation) – Project-ID 427320536 – SFB 1442, as well as by Germany’s Excellence Strategy EXC 2044 390685587, Mathematics Münster: Dynamics–Geometry–Structure. 

\section{Strict unit and Picard spectra of Lubin Tate theories}\label{section:strictE}
The goal of this section is to prove \Cref{thm:Picshift}. We will mostly use it at heights $0$ and $1$ (in fact, for most of the paper, only at height $0$), and we spell out the statement at height $0$ for a reader not used to higher chromatic considerations:
\begin{thm}[Special case of \Cref{thm:Picshift} at $n=0$]
    Let $F$ be a separably closed field of characteristic $\neq p$. The inclusion $\mu_{p^\infty}(F)\subset F$ and the canonical map $\PPic(F)\to~GL_1(K(F))$ induce equivalences $$B\mu_{p^\infty}\to \Gpic(F)[p^\infty]\to \Gm(L_{K(1)}K(F))[p^\infty]\simeq \Gm(\KU_p)[p^\infty]$$
\end{thm}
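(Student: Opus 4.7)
The strategy is to identify all three spaces with $B\mu_{p^\infty}$ by independent means, and then to verify that the canonical maps implement these identifications.

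For the source $\Gpic(F)[p^\infty]$: since $F$ is a field, the only invertible perfect complexes are shifts of $1$-dimensional vector spaces, so $\PPic(F)$ is $1$-truncated with $\pi_0 = \mathbb Z$ (coming from shifts) and $\pi_1 = F^\times$ (automorphisms of a line). The Postnikov $k$-invariant lies in $\mathrm{Ext}^2_{\mathbb Z}(\mathbb Z, F^\times) = 0$, and so $\PPic(F) \simeq H\mathbb Z \oplus \Sigma HF^\times$ as connective spectra. Both summands are $H\mathbb Z$-module spectra, hence so is $\PPic(F)$, and therefore $\Gpic(F) = \Map_{\Sp}(H\mathbb Z, \PPic(F)) \simeq \PPic(F)$. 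The functor $[p^\infty]$ annihilates the torsion-free $\mathbb Z$-summand (its fiber over $\mathbb Z[\frac{1}{p}]$ is not connective) and sends $\Sigma HF^\times$ to $\Sigma H\mu_{p^\infty}(F) = B\mu_{p^\infty}$, using that $F$ is separably closed of characteristic $\neq p$. This gives the first equivalence $B\mu_{p^\infty}\simeq \Gpic(F)[p^\infty]$, realized precisely by the inclusion $\mu_{p^\infty}(F) \subset F^\times$.

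For the rightmost equivalence: by Suslin rigidity together with Thomason's computation of the $K(1)$-local $K$-theory of algebraically closed fields, $L_{K(1)}K(F) \simeq \KU_p$ as $\mathbb{E}_\infty$-rings whenever $F$ is separably closed of characteristic $\neq p$, which directly induces $\Gm(L_{K(1)}K(F))[p^\infty] \simeq \Gm(\KU_p)[p^\infty]$.

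For the middle equivalence, I would invoke the computation $\Gm(\KU_p)[p^\infty] \simeq B\mu_{p^\infty}$ from the chromatic Nullstellensatz \cite{ChroNS}, where the target $\mu_{p^\infty}$ arises from the natural $p$-power roots of unity via the cyclotomic structure on $\KU_p$. Both source and target are then identified as $B\mu_{p^\infty}$, so it suffices to verify that the canonical map implements this identification, i.e., acts on $\mu_{p^\infty}$ by the identity (up to an automorphism). This compatibility is the main obstacle, and is exactly what the cyclotomic redshift of \cite{Cyclored} supplies: roots of unity in the base field $F$ are sent to the corresponding canonical roots of unity in $L_{K(1)}K(F) \simeq \KU_p$ through the cyclotomic structure built into $K(1)$-local $K$-theory. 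As the remark following \Cref{thm:Picshift} points out, once the results of \cite{ChroNS} and \cite{Cyclored} are granted, the statement is essentially an observation.
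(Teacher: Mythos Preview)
Your overall strategy matches the paper's: identify both endpoints with $B\mu_{p^\infty}$ via \cite{ChroNS}, and use cyclotomic redshift \cite{Cyclored} to check that the $K$-theory map realizes this identification. The appeals to Suslin rigidity for $L_{K(1)}K(F)\simeq\KU_p$ and to \cite{ChroNS} for $\Gm(\KU_p)[p^\infty]$ are exactly what the paper uses (cf.\ \Cref{obs:chroNSmap} and the proof of \Cref{thm:Picshifttext}).

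There is, however, a genuine error in your computation of $\Gpic(F)[p^\infty]$. You claim that $\PPic(F)\simeq H\Z\oplus\Sigma HF^\times$ because the $k$-invariant lies in $\mathrm{Ext}^2_\Z(\Z,F^\times)=0$. This is false: the Postnikov $k$-invariant of a $1$-truncated connective \emph{spectrum} with $\pi_0=\Z$ and $\pi_1=F^\times$ lives in $[H\Z,\Sigma^2 HF^\times]_{\Sp}$, not in $\mathrm{Ext}^2_\Z$, and the former group is nonzero. In fact the $k$-invariant of $\PPic(F)$ is the image of the nontrivial $k$-invariant of $\PPic(\Sph)$ along $\{\pm 1\}\hookrightarrow F^\times$, encoding the Koszul sign on $\Sigma F\otimes\Sigma F$, and it is nonzero whenever $\mathrm{char}(F)\neq 2$. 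Indeed, if your splitting held then $\Gpic(F)\to\PPic(F)$ would be an equivalence and $\Sigma F$ would be a strict Picard element, which the paper explicitly notes is false (see the remark after the first corollary to \Cref{thm:generalblahG}).

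The conclusion $\Gpic(F)[p^\infty]\simeq B\mu_{p^\infty}$ is nonetheless correct. One clean fix: compute $\PPic(F)[p^\infty]$ directly from the long exact sequence of the fiber sequence $\Sigma HF^\times\to\PPic(F)\to H\Z$ and $p$-divisibility of $F^\times$ to get $B\mu_{p^\infty}$; since this is already an $H\Z$-module, the inclusion $B\mu_{p^\infty}\hookrightarrow\PPic(F)$ factors through $\Gpic(F)$, and one checks $\Gpic(F)[p^\infty]\to\PPic(F)[p^\infty]$ is an equivalence by computing $\pi_0$ and $\pi_1$ separately. The paper instead cites \cite[Proposition 3.8]{Shacharstrict} and \cite[Proposition 8.14]{ChroNS} for this step.
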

That $\Gm(\KU_p)[p^\infty]$ is abstractly $B\mu_{p^\infty}$ is a direct consequence of \cite[Theorem H]{ChroNS}, and the fact that the map is the correct one can be seen by analyzing the proof thereof and thinking of the $p$-adic version of Snaith's theorem describing $\KU_p$ as $\Sph[B\mu_{p^\infty}]_p[\beta^{-1}]$.

The general version of \Cref{thm:Picshift} will be a simple consequence of the computations from \cite{ChroNS} and the ideas of \cite{Cyclored}. First, we need a bit of set-up. 
\begin{nota}
    Recall the Carmeli--Schlank--Yanovski height $n$ chromatic cyclotomic extensions \cite[Definition 4.7]{Cyclochro} denoted $\Sph_{T(n)}[\omega^{(n)}_{p^r}]$ and their colimit as $r$ increases, denoted $\Sph_{T(n)}[\omega^{(n)}_{p^\infty}]$. They are (compatibly) $(\Z/p^r)^\times$-Galois extensions of $\Sph_{T(n)}$ and are by design localizations of $\Sph_{T(n)}[B^n\Z/p^r]$ in a $(\Z/p^r)^\times$-equivariant way. 

    In particular, they come equipped with canonical and compatible $(\Z/p^r)^\times$-equivariant maps $\Sigma^n\Z/p^r\to GL_1(\Sph_{T(n)}[\omega^{(n)}_{p^r}])$. 

    At height $n=0$, we make a convention choice that they are $k(\zeta_{p^r})$, where $k$ is a prime field of appropriate characteristic $\neq p$ (the precise characteristic depending on that of the field $F$ we are trying to prove \Cref{thm:Picshift} for). 
\end{nota}
Recall that Lubin-Tate theories $E$ come with a determinant morphim $$\det: \Aut(E)\to \Z_p^\times$$ which admits splittings. The following lemma shows that this is compatible with the chromatic cyclotomic extensions. 
\begin{lm}\label{lm:detcomp}
    Let $E$ be a Lubin-Tate theory of height $n$, based on a (perfect) field (of characteristic $p$). There is a map of commutative ring spectra $$\Sph_{T(n)}[\omega^{(n)}_{p^\infty}]\to E$$

    Furthermore, any two such maps differ by an automorphism of $E$. In particular, any such map admits a canonical $\Z_p^\times$-refinement for some splitting $e:\Z_p^\times\to \Aut(E)$ of the determinant representation $\det:\Aut(E)\to\Z_p^\times$. 
\end{lm}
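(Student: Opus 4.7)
The plan is to leverage the universal property of the chromatic cyclotomic extensions together with the formal group structure of Lubin-Tate theories. By \cite{Cyclochro}, $\Sph_{T(n)}[\omega^{(n)}_{p^r}]$ is defined as a localization of $\Sph_{T(n)}[B^n \Z/p^r]$ at its ``primitive'' part, so giving a map of $T(n)$-local commutative ring spectra $\Sph_{T(n)}[\omega^{(n)}_{p^r}] \to E$ amounts to exhibiting a suitable map $B^n \Z/p^r \to \Omega^\infty \mathfrak{gl}_1(E)$, i.e., a primitive height-$n$ $p^r$-th root of unity in $E$. The task is thus to produce such roots compatibly in $r$ in any Lubin-Tate theory $E$, and then to control the resulting torsor of choices.

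For existence at $n \geq 1$, I would build the primitive roots from the formal group of $E$: the $p^r$-torsion of $\hat{\mathbb G}_E$ on $\pi_0 E$ is a finite flat group scheme whose determinant (a height-one formal group) yields primitive $p^r$-th roots of unity in $\pi_0 E$, which twisted by the Bott-type class defining the height gives the required element of $\Gm(E)$. A cleaner alternative is to invoke the chromatic Nullstellensatz of \cite{ChroNS}, which implies that every Lubin-Tate theory $E$ is algebraically closed among $K(n)$-local commutative ring spectra, so every finite Galois extension of $\Sph_{K(n)}$ admits a map to $E$. The case $n=0$ reduces to the trivial existence of compatible $p^r$-th roots of unity in a separably closed field of characteristic $\neq p$.

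For uniqueness up to automorphism, the Galois nature of the extension means that $(\Z/p^r)^\times$ acts simply transitively on the set $\pi_0 \Map_{\CAlg(\Sp_{T(n)})}(\Sph_{T(n)}[\omega^{(n)}_{p^r}], E)$. On the target, this action is realised via automorphisms of $E$ through the composite $\Aut(E) \xrightarrow{\det} \Z_p^\times \twoheadrightarrow (\Z/p^r)^\times$: an automorphism of $E$ acts on any primitive height-$n$ $p^r$-th root of unity precisely through its effect on the top exterior power of the formal group, which is the very definition of $\det$. Hence any two maps $\Sph_{T(n)}[\omega^{(n)}_{p^\infty}] \to E$ differ by an element of $\Aut(E)$, and given a splitting $e : \Z_p^\times \to \Aut(E)$ of $\det$, the same compatibility canonically promotes a chosen map to a $\Z_p^\times$-equivariant one, with $\Z_p^\times = \lim_r (\Z/p^r)^\times$ acting on the source by the limit of the Galois actions.

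The main obstacle is making this compatibility fully $\mathbb E_\infty$-structured rather than just visible on $\pi_0$: the identification of the primitive height-$n$ root of unity with the determinant of a formal group level structure must be done spectrally, with coherent equivariance data. Fortunately this is precisely the content of the cyclotomic redshift framework of \cite{Cyclored}, so once the existence step is established the $\Z_p^\times$-equivariant packaging follows from that machinery; the novelty in this lemma is merely the explicit matching with a chosen splitting $e$.
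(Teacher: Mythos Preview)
Your proposal has the right overall shape but contains a genuine gap and differs in approach from the paper.

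The gap is in existence. Your ``cleaner alternative'' via the chromatic Nullstellensatz only applies when $E$ is based on an \emph{algebraically closed} field, whereas the lemma is stated for an arbitrary perfect residue field. Your first approach, extracting primitive roots from the formal group via a determinant construction, is too vague as written and contains a confused step: the claim that the determinant of the $p^r$-torsion ``yields primitive $p^r$-th roots of unity in $\pi_0 E$'' is not literally correct (since $\pi_0 E$ is $p$-adically complete it contains no nontrivial $p$-power roots of unity), and the subsequent twisting by a Bott-type class would need substantially more care to land you in the correct component of $\Map_{\CAlg}(\Sph_{T(n)}[\omega^{(n)}_{p^r}],E)$.

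The paper's argument is more direct and handles existence and uniqueness in one stroke. It invokes the Hopkins--Lurie Fourier transform \cite[Corollary~5.3.26]{hopkinslurie} to identify $E[B^n C_{p^r}]\simeq E^{C_{p^r}}$ and hence $E[\omega^{(n)}_{p^r}]\simeq E^{(C_{p^r})^\times}$ as commutative $E$-algebras, compatibly in $r$; passing to the colimit gives $E[\omega^{(n)}_{p^\infty}]\simeq C(\Z_p^\times,E)$. Since $E$ has no nontrivial idempotents, commutative algebra maps $C(\Z_p^\times,E)\to E$ are exactly evaluations at points of $\Z_p^\times$, so such maps exist and any two differ by a translation, hence by an automorphism of $E$. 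This works uniformly over any perfect residue field because the Fourier transform only requires the orientation that $E$ carries by definition. For the final $\Z_p^\times$-equivariant refinement the paper appeals to \cite[Theorem~5.8]{Cyclochro}; the redshift machinery of \cite{Cyclored} you cite is not the relevant input here.
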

\begin{proof}
The first part follows from Hopkins--Lurie's version of the chromatic Fourier transform \cite[Corollary 5.3.26]{hopkinslurie} which implies in particular that $E[B^nC_{p^r}]\simeq E^{C_{p^r}}$ as commutative $E$-algebras, so that $E[\omega^{(n)}_{p^r}]\simeq E^{(C_{p^r})^\times}$ as commutative $E$-algebras, compatibly with $r$. In particular, $E[\omega^{(n)}_{p^\infty}]\simeq C(\Z_p^\times,E)$ where $C(X,E)$ is continuous maps from $X$ to $E$ for any profinite set $X$. 

Furthermore, this equivalence is $\Z_p^\times$-equivariant and $E$ has no nontrivial idempotents, so any map $C(\Z_p^\times,E)\to E$ is given by evaluation at a single $x\in\Z_p^\times$, and therefore any two such maps do differ by a translation in $\Z_p^\times$, hence, ultimately, an automorphism of $E$. 

The final statement follows formally by combining what was said so far with \cite[Theorem 5.8]{Cyclochro}. 
\end{proof}
We now state explicitly the following - the point of this observation is not the equivalence itself (which is again a direct consequence of \cite[Theorem H]{ChroNS}), rather the fact that it is induced by a specific map: 
\begin{obs}\label{obs:chroNSmap}
Let $E$ be a Lubin-Tate theory of height $n$ based on an algebraically closed field of characteristic $p$. 
    The composite map $\Sph_{T(n)}[B^n\Z/p^\infty]\to\Sph_{T(n)}[\omega^{(n)}_{p^\infty}]\to E$ corresponds by adjunction to a map $\Sigma^n \Z/p^\infty\to \Gm(E)$ and hence $\Sigma^{n+1}\Z/p^\infty\to \Gpic(E)$, and finally, again by adjunction, to a map $$\Sigma^{n+1}\Z/p^\infty\to\Gpic(E)[p^\infty]$$
    This last map is an equivalence.
\end{obs}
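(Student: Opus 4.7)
The plan is to reduce to identifying our map with the one implicit in the proof of \cite[Theorem H]{ChroNS}, rather than proving the equivalence from scratch.

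First I would note that the source $\Sigma^{n+1}\Z/p^\infty$ satisfies $(\Sigma^{n+1}\Z/p^\infty)[p^\infty] \simeq \Sigma^{n+1}\Z/p^\infty$ tautologically (as $\Z/p^\infty[\tfrac{1}{p}] = 0$), so the map does land in $\Gpic(E)[p^\infty]$ as claimed, with no extra work. By \cite[Theorem H]{ChroNS}, the target is abstractly equivalent, as a connective $\Z$-module spectrum, to $\Sigma^{n+1}\Z/p^\infty$. Since $\End_{D_{\geq 0}(\Z)}(\Sigma^{n+1}\Z/p^\infty) \simeq \Z_p$, any endomorphism is multiplication by some $a \in \Z_p$, and is an equivalence iff $a$ is a unit, iff $a \not\equiv 0 \pmod p$. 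Thus it suffices to check that the restriction of our map to the $p$-torsion subspectrum $\Sigma^{n+1}\Z/p \hookrightarrow \Sigma^{n+1}\Z/p^\infty$ is non-null.

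Second, I would unwind the adjunctions. Under the chain anima $\leftrightarrow$ $\Sph_{T(n)}$-algebras (via $\Sph_{T(n)}[-]$ and $\Gm(-)$) and $\Gm \leftrightarrow \Gpic$ (via $\Omega$ and $B$), the $p$-torsion restriction corresponds to the truncation of our map at $r=1$, namely $\Sph_{T(n)}[B^n C_p] \to \Sph_{T(n)}[\omega^{(n)}_p] \to E$. So non-nullity in $\Gpic(E)$ is the assertion that the latter composite is not the base-change along $\Sph_{T(n)}\to E$ of the trivial map $\Sph_{T(n)}[B^n C_p] \to \Sph_{T(n)}$ classifying $0 \in \Sigma^n\Z/p$.

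Third, I would invoke the chromatic Fourier transform \cite[Corollary 5.3.26]{hopkinslurie} as in the proof of \Cref{lm:detcomp}, which identifies $E[B^n C_p] \simeq E^{C_p}$ and hence $E[\omega^{(n)}_p] \simeq E^{C_p \smallsetminus \{e\}} = E^{(\Z/p)^\times}$ as commutative $E$-algebras. Under this identification the $E$-algebra map $E[\omega^{(n)}_p]\to E$ picks out one of the nontrivial idempotents, i.e., a primitive character $C_p \to \Gm(E)$, which is manifestly nontrivial. This is exactly the class that generates $\pi_{n+1}\Gpic(E)[p^\infty]$ in the proof of \cite[Theorem H]{ChroNS}.

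The main obstacle is the identification in the last step: while the value of $\pi_{n+1}\Gpic(E)[p^\infty]$ is provided by \cite{ChroNS}, verifying that the generator arising from \emph{our} cyclotomic-extension construction matches the one used there requires carefully aligning both constructions through the chromatic Fourier isomorphism $E[B^n C_{p^r}] \simeq E^{C_{p^r}}$. This should however be essentially formal, since the construction of the equivalence in \cite[Theorem H]{ChroNS} ultimately proceeds via the same cyclotomic extensions $\Sph_{T(n)}[\omega^{(n)}_{p^r}]$.
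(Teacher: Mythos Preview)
Your proposal is correct. The paper's own proof is simply a citation: it says the claim follows from the proof of \cite[Proposition 8.14]{ChroNS} together with \cite[Remark 8.18]{ChroNS}, with no further argument. Your route is more explicit: you take only the \emph{abstract} identification $\Gpic(E)[p^\infty]\simeq \Sigma^{n+1}\Z/p^\infty$ from \cite[Theorem H]{ChroNS} as a black box, use the observation $\End_{D_{\geq 0}(\Z)}(\Sigma^{n+1}\Z/p^\infty)\simeq\Z_p$ to reduce to checking non-nullity at level $r=1$, and then verify this directly via the Fourier identification $E[B^nC_p]\simeq E^{C_p}$.

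One remark: the ``main obstacle'' you flag is not actually an obstacle. Your third step already gives an independent verification of non-nullity at $r=1$. Under the Fourier equivalence, the composite $E^{C_p}\to E^{(\Z/p)^\times}\to E$ is projection onto a factor indexed by a nontrivial character, while the augmentation coming from $B^nC_p\to *$ is projection onto the trivial-character factor; since $E$ has no nontrivial idempotents these are distinct $E$-algebra maps and hence distinct classes in $\pi_0\Map_{\CAlg}(\Sph_{T(n)}[B^nC_p],E)$. So you do not need to match your map with a specific generator inside the proof of \cite[Theorem H]{ChroNS}: the abstract value of the target plus your direct check suffice. The paper instead points the reader to the precise spot in \cite{ChroNS} where this very map is analyzed, trading your reduction step for a more targeted citation.
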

\begin{proof}
    This follows from the proof of \cite[Proposition 8.14]{ChroNS}, cf. also \cite[Remark 8.18]{ChroNS}

    We note that this also works when\footnote{Cf. also \cite[Propostion 3.8]{Shacharstrict} for a comparison with $\PPic[p^\infty]$ in this case. } $n=0$. 
    \end{proof}
We can now already prove \Cref{thm:Picshift}, which we recall for the convenience of the reader:
\begin{thm}\label{thm:Picshifttext}
Fix $n\geq 0$. 
    Let $E_n$ be any Lubin-Tate theory based on an algebraically closed field of characteristic $p$ if $n\geq 1$, or a separably closed field of characteristic $\neq p$ if $n=0$. For any map of commutative ring spectra $L_{T(n+1)}K(E_n)\to E_{n+1}$ to a height $(n+1)$ Lubin-Tate theory, also based on an algebraically closed field of characteristic $p$, the induced map $$\Gpic(E_n)[p^\infty]\to \Gm(L_{T(n+1)}K(E_n))[p^\infty]\to \Gm(E_{n+1})[p^\infty]$$  is an equivalence.
\end{thm}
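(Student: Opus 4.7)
The plan is to apply Observation~\ref{obs:chroNSmap} at both heights $n$ and $n+1$ to identify source and target of the displayed composite abstractly with $\Sigma^{n+1}\Z/p^\infty$, and then to use cyclotomic redshift \cite{Cyclored} to check that the map realizes this identification. As the author remarks, this is essentially a tautology given the inputs.

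First I would fix maps $\phi_n\colon \Sph_{T(n)}[\omega^{(n)}_{p^\infty}]\to E_n$ and $\phi_{n+1}\colon \Sph_{T(n+1)}[\omega^{(n+1)}_{p^\infty}]\to E_{n+1}$ using Lemma~\ref{lm:detcomp}, with compatibly chosen determinant splittings. By Observation~\ref{obs:chroNSmap}, $\phi_n$ induces an equivalence $\Sigma^{n+1}\Z/p^\infty\xrightarrow{\sim}\Gpic(E_n)[p^\infty]$, while $\phi_{n+1}$ induces an equivalence $\Sigma^{n+2}\Z/p^\infty\xrightarrow{\sim}\Gpic(E_{n+1})[p^\infty]$; looping once (and using that $[p^\infty]$ commutes with $\Omega$, as a fiber) yields an equivalence $\Sigma^{n+1}\Z/p^\infty\xrightarrow{\sim}\Gm(E_{n+1})[p^\infty]$. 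Both sides of the composite are thus abstractly $\Sigma^{n+1}\Z/p^\infty$.

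Next, I would apply $L_{T(n+1)}K(-)$ to $\phi_n$: by cyclotomic redshift, the resulting domain is identified with $L_{T(n+1)}K(\Sph_{T(n)})[\omega^{(n+1)}_{p^\infty}]$. Precomposing with the base-change along the unit $\Sph_{T(n+1)}\to L_{T(n+1)}K(\Sph_{T(n)})$ and then composing with the given map $L_{T(n+1)}K(E_n)\to E_{n+1}$ produces a second map $\phi_{n+1}'\colon \Sph_{T(n+1)}[\omega^{(n+1)}_{p^\infty}]\to E_{n+1}$. The key consequence of \cite{Cyclored} I would invoke is that the natural transformation $\Gpic(-)\to \Gm(L_{T(n+1)}K(-))$ carries the canonical height-$n$ cyclotomic strict Picard class of $\Sph_{T(n)}[\omega^{(n)}_{p^\infty}]$ to the canonical height-$(n+1)$ cyclotomic strict unit class of $L_{T(n+1)}K(\Sph_{T(n)})[\omega^{(n+1)}_{p^\infty}]$. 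Combining this with the fact that $\phi_{n+1}'$ and $\phi_{n+1}$ differ only by an automorphism of $E_{n+1}$ (by Lemma~\ref{lm:detcomp}, and this automorphism acts invertibly on $\Gm(E_{n+1})[p^\infty]$), the composite in the theorem will correspond, under the equivalences of the previous paragraph applied to $\phi_n$ and to $\phi_{n+1}'$, to the identity on $\Sigma^{n+1}\Z/p^\infty$; in particular it is an equivalence.

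The main obstacle is extracting from \cite{Cyclored} the precise compatibility highlighted above, namely that cyclotomic redshift intertwines the height-$n$ cyclotomic strict Picard class of the base with the height-$(n+1)$ cyclotomic strict unit class of its $T(n+1)$-local $K$-theory, and not merely that it produces a ring-spectrum equivalence. Once this is at hand the rest is a formal diagram chase together with the uniqueness clause of Lemma~\ref{lm:detcomp}.
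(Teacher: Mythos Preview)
Your proposal is correct and follows essentially the same route as the paper: invoke \Cref{obs:chroNSmap} at both heights, use the uniqueness clause of \Cref{lm:detcomp} to handle the ambiguity in the map to $E_{n+1}$, and reduce everything to the compatibility of cyclotomic redshift with the canonical cyclotomic classes. The paper packages this last step as a standalone lemma and cites \cite[Lemma 4.27]{Cyclored} for exactly the compatibility you flag as the main obstacle, so your identification of where the content lies is on the mark.
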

\begin{proof}
    Since the result is clearly invariant under self-equivalences of $E_{n+1}$, \Cref{lm:detcomp} guarantees that we may assume without loss of generality that the following diagram commutes: 
    \[\begin{tikzcd}
	{} & {\Sph_{K(n+1)}[\omega^{(n+1)}_{p^{\infty}}]} \\
	{L_{K(n+1)}K(E_n^{h\Z_p^\times})[\omega^{(n+1)}_{p^\infty}]} \\
	{L_{K(n+1)}K(E_n)} && {E_{n+1}}
	\arrow[from=1-2, to=2-1]
	\arrow[from=1-2, to=3-3]
	\arrow["\simeq"', from=2-1, to=3-1]
	\arrow[from=3-1, to=3-3]
\end{tikzcd}\]
where the left vertical arrow is induced by cyclotomic redshift \cite[Theorem B]{Cyclored}. Thus, by \Cref{obs:chroNSmap}, we are reduced to the following lemma.
\end{proof}
\begin{lm}
    Let $R$ be a $T(n)$-local commutative ring spectrum. The following diagram of spectra commutes: 
    \[\begin{tikzcd}
	{\Sigma\Sigma^n\Z/p^\infty} & {\Sigma GL_1(R[\omega^{(n)}_{p^\infty}])} & {\PPic(R[\omega^{(n)}_{p^\infty}])} \\
	&& {GL_1(L_{T(n+1)}K(R[\omega^{(n)}_{p^\infty}]))} \\
	{\Sigma^{n+1}\Z/p^\infty} & {} & {GL_1(L_{T(n+1)}K(R)[\omega^{(n+1)}_{p^\infty}])}
	\arrow[from=1-1, to=1-2]
	\arrow["{=}"', from=1-1, to=3-1]
	\arrow[from=1-2, to=1-3]
	\arrow["K", from=1-3, to=2-3]
	\arrow["\simeq", from=2-3, to=3-3]
	\arrow[from=3-1, to=3-3]
\end{tikzcd}\]
and hence so does the following diagram of $\Z$-modules: 
\[\begin{tikzcd}
	{\Sigma\Sigma^n\Z/p^\infty} & {\Sigma \Gm(R[\omega^{(n)}_{p^\infty}])[p^\infty]} & {\Gpic(R[\omega^{(n)}_{p^\infty}])[p^\infty]} \\
	&& {\Gm(L_{T(n+1)}K(R[\omega^{(n)}_{p^\infty}]))[p^\infty]} \\
	{\Sigma^{n+1}\Z/p^\infty} & {} & {\Gm(L_{T(n+1)}K(R)[\omega^{(n+1)}_{p^\infty}])[p^\infty]}
	\arrow[from=1-1, to=1-2]
	\arrow["{=}"', from=1-1, to=3-1]
	\arrow[from=1-2, to=1-3]
	\arrow["K", from=1-3, to=2-3]
	\arrow["\simeq", from=2-3, to=3-3]
	\arrow[from=3-1, to=3-3]
\end{tikzcd}\]
\end{lm}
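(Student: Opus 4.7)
The plan is to trace the canonical class $\Sigma^{n+1}\Z/p^\infty$ through the upper-right composite and observe that the commutativity is built into the very construction of cyclotomic redshift. The top-row suspension $\Sigma^{n+1}\Z/p^\infty \to \PPic(R[\omega^{(n)}_{p^\infty}])$ arises by definition (using $\PPic = \Sigma GL_1$ on connective covers) as the $(\Z/p^r)^\times$-equivariant adjoint, compatibly in $r$, of the Bott-style localization $R[B^n\Z/p^r] \to R[\omega^{(n)}_{p^r}]$ defining the chromatic cyclotomic extension. Composition with the natural $K$-theoretic determinant map $\PPic(-) \to GL_1(L_{T(n+1)}K(-))$ --- on $\pi_0$ sending a line bundle $L$ to its class $[L] \in K_0$, which is a unit since $L$ is invertible --- produces the class $\Sigma^{n+1}\Z/p^\infty \to GL_1(L_{T(n+1)}K(R[\omega^{(n)}_{p^\infty}]))$ that populates the upper-right corner.

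Next, I invoke cyclotomic redshift \cite[Theorem B]{Cyclored}, which supplies the right-hand vertical equivalence $L_{T(n+1)}K(R[\omega^{(n)}_{p^\infty}]) \simeq L_{T(n+1)}K(R)[\omega^{(n+1)}_{p^\infty}]$. The content that makes the diagram commute is that this equivalence \emph{identifies} the class just constructed with the canonical height $(n+1)$ cyclotomic class on the right-hand side. Adjointly, this says that the map $L_{T(n+1)}K(R)[B^{n+1}\Z/p^\infty] \to L_{T(n+1)}K(R[\omega^{(n)}_{p^\infty}])$ induced by the Picard class inverts the relevant Bott element, and hence factors through $L_{T(n+1)}K(R)[\omega^{(n+1)}_{p^\infty}]$ via (the inverse of) the redshift equivalence --- this is precisely how that redshift map is characterized in \cite{Cyclored}. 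At height $n=0$, where by convention $\omega^{(0)}_{p^r}$ is $k(\zeta_{p^r})$ rather than a Snaith-type object, the statement reduces to the tautology that the map sending a root of unity in the field to its unit class in $K_0$ is the evident one.

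To deduce the second diagram from the first, apply the functor $E \mapsto \Map_{\Sp^\mathrm{cn}}(\Z, E)[p^\infty]$, which by construction sends $GL_1$ to $\Gm[p^\infty]$ and $\PPic$ to $\Gpic[p^\infty]$; being a composite of limit-preserving constructions, it preserves commutativity of the first diagram. The bottom-left object $\Sigma^{n+1}\Z/p^\infty$ is left unchanged, because it is already a $p$-primary torsion $\Z$-module, so the bottom edge is tautologically the same map in both diagrams.

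The main obstacle --- indeed the only non-formal step --- is confirming that the cyclotomic redshift equivalence of \cite[Theorem B]{Cyclored} is characterized exactly by the Picard-class compatibility used above. Granted that, everything else is formal adjunction, functoriality and naturality of $K$-theory. This should be visible directly from the construction in that reference, which factors through an adjoint map of the form $L_{T(n+1)}K(R)[B^{n+1}\Z/p^r] \to L_{T(n+1)}K(R[\omega^{(n)}_{p^r}])$ that is by design the one coming from the Picard class --- but this is the one step that genuinely appeals to external work rather than to categorical manipulation inside the present paper.
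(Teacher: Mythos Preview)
Your proposal is correct and follows essentially the same approach as the paper: the paper's proof simply cites \cite[Lemma 4.27]{Cyclored} for the first diagram and says the second follows ``by adjunction,'' which is exactly the content you unpack in more detail. Your explicit identification of the non-formal step---that the redshift equivalence is characterized by compatibility with the Picard class---is precisely what that lemma in \cite{Cyclored} records, so your more verbose account and the paper's one-line citation amount to the same argument.
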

\begin{proof}
    The second diagram is a direct consequence of the first one by adjunction. The first one follows from the set-up of cyclotomic redshift, cf. \cite[Lemma 4.27]{Cyclored}. 
\end{proof}

\section{Galois theory and finitary behaviour of $\Gpic$}\label{section:Galois}
The goal of this section is to prove \Cref{thm:subfinintro}, that is, the subfinitaryness of the torsion part of $\Omega^{n-1}\Gpic$ on $T(n)$-local commutative ring spectra. We will do so by relating it via the chromatic Fourier transform \cite{Fourier} to a similar finitary-ness statement for Galois extensions. We begin with the necessary Galois-theoretic preliminaries and then move on to the actual proof. 
\subsection{Galois theory}
\newcommand{\C}{\mathcal{C}}
We begin with the Galois-theoretic preliminaries. Our goal will later be to apply them in the category of $T(n)$-local spectra. For clarity, we abstract the arguments away and work in the following setting: 
\begin{nota}\label{ass:standing}
    $\C\in\CAlg(\PrL_\st)$ is a stable presentably symmetric monoidal category which is compactly generated. We \emph{do not} assume that the unit is compact, but we do assume that there exists a dualizable object $V$ which carries the structure of an $\mathbb E_4$-algebra in $\C$, such that $V\otimes -$ is conservative and such that $V$ is compact in $\Mod_V(\C)$. 
\end{nota}
\begin{rmk}
    If $\C$ is as above, then so is $\Mod_R(\C)$ for every commutative algebra $R$ in $\C$. 
\end{rmk}
\begin{ex}\label{ex:T(n)standingass}
    In the case of $\C=\Sp_{T(n)}$, such a $V$ exists by the work of Burklund on Moore spectra \cite{RobertMoore}. 
\end{ex}
\begin{nota}\label{nota:idem}
    Let $C$ be an additively symmetric monoidal category and $R\in\Alg(C)$. We let $\Idem(R)$ denote the \emph{set} of idempotents in the ordinary ring $\pi_0\Map(\one_C,R)$. 

    When $R$ is homotopy commutative, this is equivalent to $\Map_{\Alg(\C)}(\one_C\times\one_C, R)$ and this is also true if one replaces $\Alg(\C)$ by $\Alg_{\mathbb E_n}(\C)$ for any $n\in\mathbb N_{\geq 2}\cup\{\infty\}$ (though then homotopy commutativity is automatic), cf. \cite{MeSep}. 
\end{nota}
\begin{lm}\label{lm:idem1}
    Let $\C$ be as in \Cref{ass:standing}, and let $R\in \CAlg(\C)$. The map $$\Idem(R)\to \Idem(R\otimes V)$$ is injective. 

    Furthermore, let $e\in \Idem(R\otimes V)$. It is in the image of $\Idem(R)$ if and only if $$(V\otimes R)_e \otimes_R (V\otimes R)_{1-e} = 0$$ 
    Here, for an $V\otimes R$-module $M$, we let $M_e := M[e^{-1}]$. 
\end{lm}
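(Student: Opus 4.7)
I will treat the three parts in turn, using the standard ``faithfully flat'' picture with $V$ as the cover.

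For \emph{injectivity (1)}, the key point is that $R\to V\otimes R$ has vanishing fiber in $\C$. Indeed, after tensoring with $V$ this map becomes the coface $d^1:V\otimes R\to V\otimes V\otimes R$, $v\otimes r\mapsto v\otimes 1\otimes r$, which is split by the algebra multiplication $\mu_V\otimes\id_R$; so its fiber is zero, and by conservativity of $V\otimes-$ the original fiber vanishes. Hence $R\to V\otimes R$ is injective on all homotopy groups, which is more than enough to make the inclusion $\Idem(R)\subset\pi_0\Map(\one_\C,R)$ inject into $\Idem(V\otimes R)$. For \emph{direction $(\Rightarrow)$ of (2)}, a direct computation: if $e$ lifts to $\tilde e\in\Idem(R)$, then $(V\otimes R)_e\simeq V\otimes R_{\tilde e}$ and $(V\otimes R)_{1-e}\simeq V\otimes R_{1-\tilde e}$, whence their relative tensor over $R$ is $(V\otimes V)\otimes(R_{\tilde e}\otimes_R R_{1-\tilde e})=0$, since $R_{\tilde e}\otimes_R R_{1-\tilde e}=0$.

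The interesting direction is \emph{$(\Leftarrow)$ of (2)}. Set $A=(V\otimes R)_e$ and $B=(V\otimes R)_{1-e}$, so $V\otimes R\simeq A\times B$, and assume $A\otimes_R B=0$ (equivalently, by symmetry, $B\otimes_R A=0$). The first step is to check the descent condition $d^0 e=d^1 e$ in $\pi_0(V\otimes V\otimes R)$. Decomposing $V\otimes V\otimes R=(V\otimes R)\otimes_R(V\otimes R)=(A\oplus B)\otimes_R(A\oplus B)$ and expanding $d^0e=1\otimes e$, $d^1e=e\otimes 1$, the $A\otimes_R A$-components cancel, and what remains of the difference lives entirely in the cross-term summand $(A\otimes_R B)\oplus(B\otimes_R A)=0$, hence is zero. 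The second step is to promote this identity into an actual lift of $e$ to $\Idem(R)$. For that, strengthen the argument from (1) to the full descent statement $R\simeq\mathrm{Tot}(V^{\otimes\bullet+1}\otimes R)$ in $\Alg_{\mathbb E_2}(\C)$, using the usual extra-degeneracy from $\mu_V$ together with dualizability of $V$ (to commute $V\otimes-$ past the totalization) and conservativity of $V\otimes-$ to conclude. Since $\Idem(-)=\Map_{\Alg_{\mathbb E_2}}(\one\times\one,-)$ is discrete and corepresentable, hence limit-preserving, one obtains
\[
\Idem(R)=\mathrm{eq}\bigl(\Idem(V\otimes R)\rightrightarrows\Idem(V\otimes V\otimes R)\bigr),
\]
and the descent condition just verified produces the desired lift $\tilde e$.

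The \emph{main obstacle} is precisely the descent statement in the last step: formalizing $R\simeq\mathrm{Tot}(V^{\otimes\bullet+1}\otimes R)$ at the level of $\mathbb E_2$-algebras (enough for idempotents by \cite{MeSep}) while honoring only the $\mathbb E_4$-structure on $V$, rather than the $\mathbb E_\infty$ structure one would normally invoke in a classical faithfully-flat setting. This is routine extra-degeneracy bookkeeping, but it is the one place where the hypothesis on $V$ is used nontrivially beyond conservativity and dualizability.
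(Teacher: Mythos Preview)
Your injectivity argument in (1) contains an error: a split monomorphism in a stable category does \emph{not} have vanishing fiber. If $f:X\to Y$ admits a retraction, then $Y\simeq X\oplus\mathrm{cofib}(f)$ and $\mathrm{fib}(f)\simeq\Omega\,\mathrm{cofib}(f)$, which is zero only when $f$ is an equivalence. So from the splitting of $V\otimes R\to V\otimes V\otimes R$ you cannot deduce that the fiber of $R\to V\otimes R$ vanishes, and hence not that $\pi_0\Map(\one_\C,R)\to\pi_0\Map(\one_\C,V\otimes R)$ is injective by this route. Fortunately this is not fatal to your overall strategy: injectivity is a formal consequence of the descent identification $\Idem(R)=\mathrm{eq}\bigl(\Idem(V\otimes R)\rightrightarrows\Idem(V\otimes V\otimes R)\bigr)$ that you establish later, since an equalizer always injects into its first term. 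You should simply reorganize and deduce (1) from your proof of $(\Leftarrow)$.

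For $(\Leftarrow)$ your descent approach is correct and genuinely different from the paper's. The paper argues categorically: it declares $M\in\Mod_R(\C)$ to be $\tilde e$-local when $V\otimes M$ is $e$-local, checks via conservativity and internal homs that $\tilde e$-local and $\widetilde{1-e}$-local objects are mutually orthogonal and exhaust $\Mod_R(\C)$, and reads off the idempotent from the resulting product decomposition of the module category. Your route places the lemma in the faithfully-flat-descent paradigm instead. The step you flag as the main obstacle is real but tractable: by Dunn additivity $V\in\Alg_{\mathbb E_4}(\C)\simeq\Alg_{\mathbb E_1}(\Alg_{\mathbb E_3}(\C))$, so $V\otimes-$ is a monad already on $\Alg_{\mathbb E_3}(\C)$ and a fortiori on $\Alg_{\mathbb E_2}(\C)$, and the monadic cobar resolution then produces the cosimplicial diagram $V^{\otimes\bullet+1}\otimes R$ in $\Alg_{\mathbb E_2}(\C)$ without further work; your remaining steps (extra degeneracy after tensoring with $V$, dualizability to commute $V\otimes-$ past $\mathrm{Tot}$, conservativity, and the verification $d^0e=d^1e$) go through as written. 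The paper's argument is more self-contained and uses almost nothing of the $\mathbb E_4$ structure; yours is more conceptual but imports more machinery.
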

\begin{proof}
    Replacing $\C$ by $\Mod_R(\C)$ we may assume (for simplicity of notation) that $R=\one_\C$. 

    Let $e$ be an idempotent in $\one_\C$. The canonical map $M\to M[e^{-1}]$ is an equivalence if and only if it is one after tensoring with $V$. Thus $M$ is $e$-local if and only if $V\otimes M$ is $\overline{e}$-local, where $\overline{e}$ is the image of $e$ in $\Idem(V)$. Since one can recover $e$ from the category of $e$-local modules, the injectivity claim follows. 

    Let now $e\in\Idem(V)$ be such that $V[e^{-1}]\otimes V[(1-e)^{-1}] = 0$. What this assumption guarantees is that if $M$ is an $e$-local $V$-module, then so is $V\otimes M$ (viewed as a $V$-module via the left factor). Indeed, this statement reduces to $M=V[e^{-1}]$ and then this is clearly equivalent to the assumption. 
    
    Now define an object $M$ of $\C$ to be $\tilde e$-local if $V\otimes M$ is $e$-local, and similarly for $\widetilde{1-e}$. For $M$ $\tilde e$-local and $N$ $\widetilde{1-e}$, $\hom_\C(M,N)\otimes V\simeq \hom_{\Mod_V(\C)}(V\otimes M, V\otimes N) = 0$, where $\hom_\C$ denotes the internal hom. By conservativity of $V$, $\hom_\C(M,N) =0$, and the argument is symmetric. 

    Now suppose $N$ is right orthogonal to all $\tilde e$-local objects, and let $M$ be an $e$-local $V$-module. By our observation earlier, $V\otimes M$ is also $e$-local, so that $M$ is $\tilde e$-local as an object of $\C$, and hence $\hom_\C(M,N)= 0$ and hence $\hom_{\Mod_V(\V)}(M,V\otimes N)=0$, so that ultimately $N$ is $\widetilde{(1-e)}$-local. It follows that $\C$ splits as a product of $\tilde{e}$-local and $\widetilde{1-e}$-local objects, and so we get a corresponding idempotent in $\one_\C$. From the definitions and this last point, it is now clear that this idempotent maps to $e\in V$, as was to be shown. 
\end{proof}

 We can now prove the following key lemma, which is the base for an induction up the layer of Galois extensions. 
\begin{lm}\label{lm:idmcpct}
Let $\C$ be as in \Cref{ass:standing}. In this case, for any finite set $X$, the commutative algebra $\prod_X\one_\C$ is compact. 
\end{lm}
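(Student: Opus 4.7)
The plan is to reduce the statement to the assertion that $\Idem(-) : \CAlg(\C) \to \Set$ preserves filtered colimits. Indeed, by \Cref{nota:idem}, $\Map_{\CAlg(\C)}(\prod_X \one_\C, R)$ is discrete and identified with the set of tuples $(e_x)_{x \in X}$ of pairwise orthogonal idempotents in $R$ with $\sum e_x = 1$ (equivalently, with $|X|-1$ pairwise orthogonal idempotents). For $|X|$ finite, this set is carved out of a finite Cartesian power of $\Idem(R)$ by finitely many equational conditions $e_i e_j = 0$, i.e., by a finite limit construction from $\Idem(R)$. Since finite limits commute with filtered colimits in $\Set$, it suffices to show that $\Idem$ is finitary on $\CAlg(\C)$.

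Fix a filtered diagram $(R_i)_{i \in I}$ in $\CAlg(\C)$ with colimit $R$. Since filtered colimits in $\CAlg$ are computed underlying and $V \otimes -$ preserves colimits, $V \otimes R = \colim_i V \otimes R_i$ in $\C$ and in $\Mod_V(\C)$. The tensor-forget adjunction gives $\Map_\C(\one_\C, V \otimes R_i) \simeq \Map_{\Mod_V}(V, V \otimes R_i)$, and the compactness of $V$ in $\Mod_V(\C)$ then yields $\pi_0(V \otimes R) = \colim_i \pi_0(V \otimes R_i)$ as rings; since idempotency is equational, it follows that $\Idem(V \otimes R) = \colim_i \Idem(V \otimes R_i)$. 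For injectivity of $\colim_i \Idem(R_i) \to \Idem(R)$: two representatives $e_i, e_i' \in \Idem(R_i)$ with the same image in $\Idem(R)$ map to the same $\bar e \in \Idem(V \otimes R) = \colim_j \Idem(V \otimes R_j)$, so their images in $\Idem(V \otimes R_j)$ coincide for some $j \geq i$, whence $e_j = e_j'$ in $\Idem(R_j)$ by the injectivity part of \Cref{lm:idem1}.

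For surjectivity, let $e \in \Idem(R)$ with image $\bar e \in \Idem(V \otimes R)$. By the above, $\bar e$ is represented by some $\bar e_i \in \Idem(V \otimes R_i)$, and I would argue that for some $j \geq i$, $\bar e_j$ satisfies the descent criterion of \Cref{lm:idem1}, giving the desired $e_j \in \Idem(R_j)$. A direct computation identifies the object $(V \otimes R_j)_{\bar e_j} \otimes_{R_j} (V \otimes R_j)_{1 - \bar e_j}$ with the summand of $V \otimes V \otimes R_j$ cut out by the idempotent $(\bar e_j)_1 (1 - \bar e_j)_2 = \bar e_j \otimes_{R_j} (1 - \bar e_j) \in \pi_0(V \otimes V \otimes R_j)$, where subscripts denote pushforward along the two factor inclusions; its vanishing is thus equivalent to the vanishing of this idempotent. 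Reapplying the compactness-and-adjunction argument, now to the $V$-module $V \otimes V \otimes R$ (via the first factor), yields $\pi_0(V \otimes V \otimes R) = \colim_j \pi_0(V \otimes V \otimes R_j)$. By hypothesis, $e \in \Idem(R)$ forces the descent criterion for $R$, so $(\bar e)_1(1 - \bar e)_2 = 0$ in $\pi_0(V \otimes V \otimes R)$; this vanishing is witnessed at some finite stage $j \geq i$, producing the required descent of $\bar e_j$ to some $e_j \in \Idem(R_j)$.

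The main obstacle is the absence of compactness of $\one_\C$ itself, which would otherwise make the finitariness of $\Idem$ essentially tautological. The proof circumvents this by systematically routing every required finitary statement through $V$-modules (where compactness of $V$ substitutes for the missing compactness of $\one_\C$) and using the two parts of \Cref{lm:idem1} to transfer the information back to $\C$. The delicate point is the translation of the ``closed condition'' of vanishing of an object in $\C$ into the vanishing of a single idempotent element in a $\pi_0$-ring computed on a $V$-module, where the filtered-colimit compatibility becomes usable; this hinges on the elementary observation that a summand cut out by an idempotent is trivial if and only if that idempotent is itself zero.
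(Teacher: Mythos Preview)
Your proof is correct and follows the same strategy as the paper's: reduce to finitariness of $\Idem$, transfer to $\Mod_V$ via compactness of $V$ there, and use the two halves of \Cref{lm:idem1} for injectivity and surjectivity. The one organizational difference is that the paper first isolates the case $X=\emptyset$ (i.e., that a filtered colimit of algebras is zero only if some term is) and then invokes it as a black box to detect the vanishing of $(V\otimes R_j)_{\bar e_j}\otimes_{R_j}(V\otimes R_j)_{1-\bar e_j}$, whereas you bypass that step by translating this vanishing directly into the vanishing of a single idempotent in $\pi_0(V\otimes V\otimes R_j)$ and appealing once more to compactness of $V$; the content is the same.
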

\begin{rmk}
The analogous statement for $\CAlg(\Sp)$ is immediate; the difference is of course that here, the functor $\pi_0\Map_\C(\one_\C,-)$ itself does not need to commute with filtered colimits. 
\end{rmk}
\begin{proof}
We begin with the empty set. In this case, $\Map_{\CAlg(\C)}(0,R)$ is either empty (if $R$ is nonzero) or contractible (if $R$ is $0$). Thus we must show that if $\colim_I R_i=0$, then some $R_i=0$. But this is easily proved by tensoring with $V$ and observing that the result is obvious in categories in which the unit is compact (note that this does not use any commutativity). 

We now move on to non-empty products. By a straightforward argument, we reduce to proving that $\one_\C\times\one_\C$ is compact, i.e. to the statement that $\Idem(-)$ preserves filtered colimits. 

So let $R_\bullet: I\to \CAlg(\C)$ be a filtered diagram with colimit $R$. Since $V$ is compact in $\Mod_R(\C)$, we have a colimit diagram $\colim_I \Idem(V\otimes R_i)\cong \Idem(V\otimes R)$. By \Cref{lm:idem1}, it therefore follows that $\colim_I \Idem(R_i)\to \Idem(R)$ is injective, and we are left with checking surjectivity. So let $e\in\Idem(R)$, and consider its image $\overline{e}$ in $\Idem(V\otimes R)$. 

By the first step, $\overline{e}$ lifts to some $\tilde e_i\in \Idem(V\otimes R_i)$. Consider now the $I_{i/}$-indexed system of algebras $(V\otimes R_j)_{\overline{e}}\otimes_{R_j} (V\otimes R_j)_{1-\overline{e}}$. Its colimit vanishes, hence it must vanish at some finite stage by the case of $X=\emptyset$ treated at the beginning of the proof. Therefore, for $j$ large enough $\tilde e_j$ is in the image of $\Idem(R_j)$ by \Cref{lm:idem1}, which proves surjectivity, as required. 
\end{proof}
Now we move on to more serious Galois theory. For this we briefly recall some notation and conventions. 
\begin{defn}
    Let $C\in\CAlg(\PrL)$ and $R\in\CAlg(C)$, as well as $G$ a group in spaces. A $G$-Galois extension of $R$ is an object $S\in\CAlg(C)^{BG}$ with a map $R^{\mathrm{triv}}\to S$ in $\CAlg(C)^{BG}$ satisfying the following two conditions:
    \begin{itemize}
        \item The induced map $R\to S^{hG}$ is an equivalence;
        \item The map $S\otimes_R S\to S^G$, mate of the action map $S\otimes_R S\otimes_R R[G]\to S\otimes_R S\to S$, is an equivalence. 
    \end{itemize}
    It is called faithful if $S\otimes_R -$ is conservative on $\Mod_R(C)$. 
\end{defn}
\begin{nota}
    We let $\Gal_G(R)$ denote the full subgroupoid\footnote{Most of our results also work for the full subcategory, but ultimately we plan to restrict to the full subgroupoid, so we define it this way for simplicity of notation.} of $(\CAlg(C)^{BG})_{R^{\mathrm{triv}}/}$ spanned by \emph{faithful} $G$-Galois extensions. 

    More generally, for $D\in\CAlg(\PrL)$, we let $\Gal_G(D)$ denote the full subgroupoid of $\CAlg(D)^{BG}$ spanned by Galois extensions of $\one_D$. We recover the previous example by picking $D=\Mod_R(C)$. 
\end{nota}
\begin{warn}\label{warn:notationGal}
    This notation conflicts with the notation from \cite{Fourier}, which is a source we will use later, so let us spell out the connection to clear out the confusion. The authors there consider (rightly so) that since the notion of Galois extension really only depends on the space $BG$ and not the group $G$, and since it could be fruitful to have a many-objects version of Galois extensions, one should define $A$-Galois extensions for a space $A$ (to be thought of as $BG$), and so what \emph{we} denote $\Gal_G(R)$ would be denoted $\Gal_{BG}(R)$ in \cite{Fourier}. For us, however, the group will come up more often, and so we stick to our notational choice.  
\end{warn}
One key fact we will use is Galois descent: 
\begin{lm}
    Let $D\in\CAlg(\PrL)$ and $R\in\CAlg(D)^{BG}$ a faithful $G$-Galois extension. Suppose $\one_D[G]$ is dualizable in $D$\footnote{This is the analogue of the ``stably dualizable'' assumption in \cite{rognesgalois}.} and $G$ is $D$-affine in the sense of \cite[Definition 2.15]{Fourier}.

    In this case, the canonical map $D\to \Mod_S(D)^{hG}$ is an equivalence of symmetric monoidal categories. 
\end{lm}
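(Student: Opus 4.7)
The plan is to recognize the canonical functor $L: D \to \Mod_R(D)^{hG}$, $X \mapsto X \otimes R$ (with $G$-action coming from $R$), as a monadic adjunction whose associated monad is the identity, via the Barr--Beck--Lurie theorem. Its right adjoint is $U(M) := M^{hG}$, computed after forgetting $M$ to an object of $D^{BG}$. Running through this strategy splits the problem into two halves: a unit computation and a monadicity/conservativity verification.

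First, I would verify that the unit $X \to (X \otimes R)^{hG}$ is an equivalence. The hypothesis that $\one_D[G]$ is dualizable supplies a projection formula $X \otimes Y^{hG} \simeq (X \otimes Y)^{hG}$ for $X \in D$ equipped with the trivial $G$-action and any $Y \in D^{BG}$---a standard consequence of dualizability, since $(-)^{hG}$ is then corepresented by a dualizable object of $D^{BG}$ and hence commutes with tensoring by any $X \in D$ with trivial $G$-action. Combined with the Galois identification $R^{hG} \simeq \one_D$, this yields $(X \otimes R)^{hG} \simeq X \otimes \one_D \simeq X$, so the unit is an equivalence and $L$ is fully faithful. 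Symbolically, the monad $T = UL$ is already identified with the identity on $D$.

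Next, I would check monadicity. For Barr--Beck--Lurie, one needs $U$ conservative and preserving $U$-split geometric realizations. The latter is automatic: under $\one_D[G]$-dualizability, $(-)^{hG}: D^{BG} \to D$ preserves all colimits, and the forgetful $\Mod_R(D^{BG}) \to D^{BG}$ preserves colimits as usual. For conservativity, the $G$-affineness hypothesis (\cite[Definition 2.15]{Fourier}) tells us that the adjunction $D \rightleftarrows D^{BG}$ is comonadic, so an object of $D^{BG}$ vanishes iff its $G$-fixed points do; this combined with faithfulness of $R$ on $\Mod_R(D)$ gives the conservativity of $U$ on $\Mod_R(D)^{hG} = \Mod_R(D^{BG})$. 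With monadicity in hand and the monad identified as $\id_D$, we obtain $\Mod_R(D)^{hG} \simeq \Mod_{\id}(D) = D$.

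The main obstacle is making the conservativity step precise: the two hypotheses (faithfulness of $R$ and $G$-affineness of $G$) must be combined carefully, as neither alone suffices, and one must be honest about the order of operations (forget $R$-action, then take $G$-fixed points) versus its reverse. Once conservativity is established, $L$ is an equivalence of underlying categories; it is manifestly symmetric monoidal via $L(X) \otimes_R L(Y) \simeq X \otimes Y \otimes R \simeq L(X \otimes Y)$, so the equivalence is symmetric monoidal as claimed.
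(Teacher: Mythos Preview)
Your overall Barr--Beck strategy is the right one, but both key steps rely on claims that do not follow from the stated hypotheses.

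\textbf{The unit/projection formula.} You assert that dualizability of $\one_D[G]$ gives a projection formula $X \otimes Y^{hG} \simeq (X \otimes Y)^{hG}$ for all $X \in D$, equivalently that $(-)^{hG}: D^{BG} \to D$ preserves colimits. This is false: take $D = \Sp$ and $G$ a finite group; then $\one_D[G] = \Sigma^\infty_+ G$ is dualizable, but $(-)^{hG}$ does not preserve colimits (its failure to agree with $(-)_{hG}$ is the Tate construction, which is nonzero). Your parenthetical justification, that ``$(-)^{hG}$ is corepresented by a dualizable object of $D^{BG}$'', does not hold in this generality. The same error recurs in your monadicity step when you claim $(-)^{hG}$ preserves all colimits. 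The paper instead uses $\one_D[G]$-dualizability to deduce (via \cite[Proposition 6.2.1]{rognesgalois}) that \emph{$R$ itself} is dualizable; tensoring with the dualizable $R$ then commutes with the limit $(-)^{hG}$, and since $R \otimes -$ is conservative this reduces the unit computation to the split extension $R \to R \otimes R \simeq R^G$, where it is elementary.

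\textbf{The conservativity step.} You interpret ``$G$ is $D$-affine'' as saying that $(-)^{hG}: D^{BG} \to D$ is conservative (comonadicity of $D \rightleftarrows D^{BG}$). That would be $BG$-affineness. The hypothesis is affineness of the \emph{space} $G$ in the sense of \cite[Definition 2.15]{Fourier}, i.e.\ $\Mod_{F(G,\one_D)}(D) \simeq D^G$; this concerns the finite product $(-)^G$, not homotopy fixed points. The paper uses it exactly in that form: after basechanging to $R$ the extension becomes $R \to R^G$, and $G$-affineness identifies $\Mod_{R^G}(D) \simeq \Mod_R(D)^G$ with the coinduced $G$-action, whence $(\Mod_R(D)^G)^{hG} \simeq \Mod_R(D)$ and conservativity follows. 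Your appeal to faithfulness of $R$ does not bridge the gap: faithfulness says $R \otimes -$ is conservative on $D$, which gives no direct control over vanishing of $R$-modules with $G$-action.

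In short, the paper's argument routes both halves through basechange along the dualizable, conservative $R$ to the split case, and the two hypotheses (dualizability of $\one_D[G]$, $G$-affineness of the space $G$) are tailored to that split case rather than to the general projection/conservativity claims you invoke.
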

\begin{proof}[Sketch of proof]
    From the assumption that $\one_D[G]$ is dualizable, one deduces that $S$ is itself dualizable, cf. \cite[Proposition 6.2.1]{rognesgalois} whose proof is valid in that generality. 

Since $S$ is conservative, one can check that the unit map $M\to (S\otimes M)^{hG}$ is an equivalence after tensoring with $S$, and since $S$ is dualizable it becomes the same unit map but for $S\otimes M$, i.e. the map $S\otimes M\to (S\otimes S\otimes M)^{hG}\simeq (S^G\otimes M)^{hG}\simeq ((S\otimes M)^G)^{hG}\simeq S\otimes M$, where the last step again used dualizability of $\one_D[G]$ (one needs to check that the map is the correct one). 

Now we simply need to check that the right adjoint is conservative. The commutative algebra map $S\otimes S\to S^G$ is $G$-equivariant with respect the this action and the coinduced action on $S^G$, and is an equivalence, so this is equivalently $\Mod_{S^G}(D)^{hG}$. Since $G$ is $D$-affine, we can rewrite this as $(\Mod_S(D)^{G})^{hG}$ where the action on the inner term is clearly coinduced, so that we get simply $\Mod_S(D)$. In particular, the right adjoint is conservative after tensoring with $S$, and since tensoring with $S$ is conservative, the right adjoint is conservative altogether, as was to be shown. 
\end{proof}
\begin{lm}\label{lm:compactIdemcompactGalois}
Let $C\in\CAlg(\PrL)$ be an additively presentably symmetric monoidal category, and assume that $R\mapsto\Idem(R)$ is filtered-colimit-preserving. Then for any finite group $H$, and any faithful $H$-Galois extension $R\to S$ in $C$, $S$ is compact over $R$. 
\end{lm}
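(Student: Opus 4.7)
The plan is to identify $\Map_{\CAlg(C)_{R/}}(S,-)$ with a functor determined by idempotents, and then invoke the hypothesis.

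First, faithful $H$-Galois descent (via the preceding lemma, whose hypotheses hold: $\one_C[H]\simeq\bigoplus_H\one_C$ is dualizable by additivity of $C$, and the finite group $H$ is $C$-affine) gives a symmetric monoidal equivalence $\Mod_R(C)\simeq \Mod_S(C)^{hH}$. Passing to commutative algebras, which commutes with such homotopy limits, yields $\CAlg(C)_{R/}\simeq\CAlg(C)_{S/}^{hH}$. Consequently, for any $T\in\CAlg(C)_{R/}$,
$$\Map_{\CAlg(C)_{R/}}(S,T)\simeq \Map_{\CAlg(C)_{S/}}(S\otimes_R S,\, S\otimes_R T)^{hH}.$$
The Galois axiom provides an equivalence $S\otimes_R S\simeq \prod_H S$ of $S$-algebras with $H$-action (where $H$ acts on the right factor, hence by permutation on $\prod_H S$), and additivity of $C$ gives $\prod_H S\simeq S\otimes \prod_H \one_C$ as $S$-algebras. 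Combined with the free/forgetful adjunction between $\CAlg(C)$ and $\CAlg(C)_{S/}$, the mapping space becomes
$$\Map_{\CAlg(C)_{R/}}(S,T)\simeq \Map_{\CAlg(C)}\bigl(\prod_H \one_C,\, S\otimes_R T\bigr)^{hH}.$$

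Second, maps out of $\prod_H \one_C$ in $\CAlg(C)$ classify $H$-indexed families of orthogonal idempotents summing to $1$ — call this set $I_H(-)$. It is a discrete subspace of $\Idem(-)^H$ cut out by finite equational conditions visible on $\pi_0$. Given a filtered diagram $T_\bullet$ with colimit $T$ in $\CAlg(C)_{R/}$, the left adjoint $S\otimes_R(-)\colon\CAlg(C)_{R/}\to\CAlg(C)_{S/}$ commutes with colimits, so $S\otimes_R T\simeq\colim_i S\otimes_R T_i$ in $\CAlg(C)$. The hypothesis yields $\Idem(S\otimes_R T)\simeq\colim_i \Idem(S\otimes_R T_i)$, and since the orthogonality and sum-to-one conditions are preserved under filtered colimits of sets, this upgrades to $I_H(S\otimes_R T)\simeq\colim_i I_H(S\otimes_R T_i)$. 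Finally, $I_H$ takes values in discrete spaces so that $(-)^{hH}$ agrees with ordinary $H$-fixed points, which commute with filtered colimits since $H$ is finite. Assembling these observations, $\Map_{\CAlg(C)_{R/}}(S,-)$ preserves filtered colimits, so $S$ is compact in $\CAlg(C)_{R/}$.

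The main obstacle is the descent step: establishing the symmetric monoidal equivalence $\CAlg(C)_{R/}\simeq \CAlg(C)_{S/}^{hH}$ and carefully tracking the $H$-actions on both factors of $S\otimes_R S$ and on $\prod_H\one_C$. Once this compatibility is pinned down, the remainder of the argument is a formal unwinding that trades algebra maps out of $S$ for Boolean decompositions controlled by $\Idem$.
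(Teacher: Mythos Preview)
Your proposal is correct and follows essentially the same approach as the paper: Galois descent reduces the compactness of $S$ over $R$ to the compactness of $S\otimes_R S\simeq\prod_H S$ in $\CAlg_S(C)$, which in turn follows from the idempotent hypothesis together with discreteness of the relevant mapping space. The paper organizes the argument slightly differently by separating off the $(-)^{hH}$ first (noting that $H$-fixed points preserve filtered colimits of sets) and only then applying the adjunction $\Map_{\CAlg_R}(S,S\otimes_R-)\simeq\Map_{\CAlg_S}(S\otimes_R S,S\otimes_R-)$, which sidesteps the $H$-action bookkeeping you identify as the fiddly step.
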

\begin{proof}
We have an equivalence of sets $\Map_{\CAlg_R(C)}(S,-)\simeq \Map_{\CAlg_R(C)}(S, S\otimes -)^{hH}$ and $H$-fixed points commute with filtered colimits in sets, so it suffices to prove that $\Map_{\CAlg_R(C)}(S, S\otimes_R-)$ also does. But this is equivalent to $\Map_{\CAlg_S(C)}(S\otimes_R S, S\otimes_R-)$ and so ultimately it suffices to prove that $S\otimes_R S\simeq \prod_H S$ is a compact commutative $S$-algebra. This simply follows from the idempotent hypothesis. 
\end{proof}
The previous lemma is an instance of the following claim: if standard/trivial Galois extensions are compact in $C$, then all Galois extensions are compact. It used the fact that $H$-fixed points commutes with filtered colimits on \emph{sets}, so to generalize this we will first need to prove a truncatedness property. 
\begin{lm}\label{lm:Galoistrunc}
Let $C\in\CAlg(\PrL_\st)$ and let $G$ be a truncated group in spaces such that $G$ and all its iterated loop spaces $\Omega^k(G,1)$ are $C$-affine and $\one_C[\Omega^k(G,1)]$ are all dualizable (including $k=0$, where we interpret this as just $G$).

In this case, any faithful $G$-Galois extension in $C$ is cotruncated, that is, if $R\to S$ is faithful $G$-Galois, then $\Map_{\CAlg_R(C)}(S,-)$ has values in truncated spaces.     
\end{lm}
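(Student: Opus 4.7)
I would prove this by induction on the truncation level $n$ of $G$.

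For the base case where $G$ is discrete, I would invoke the Galois descent equivalence $\CAlg_R(C)\simeq \CAlg_S(C)^{hG}$ (available thanks to the $C$-affineness of $G$ together with the dualizability of $\one_C[G]$, via the preceding descent lemma). Under this, the mapping space is identified with
\[
\Map_{\CAlg_R(C)}(S,T)\simeq \Map_{\CAlg_S(C)^{hG}}(S\otimes_R S,\,S\otimes_R T)\simeq \Map_{\CAlg_S(C)}\bigl(\textstyle\prod_G S,\,A\bigr)^{hG},
\]
where $A=S\otimes_R T$ and we have used the Galois identification $S\otimes_R S\simeq \prod_G S$ (with permutation action). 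An analysis exactly parallel to that of \Cref{lm:idem1} identifies $\Map_{\CAlg_S(C)}(\prod_G S,A)$ with the set of complete $G$-indexed systems of orthogonal idempotents in $A$; this is $0$-truncated, and taking $G$-fixed points preserves this, so the mapping space is in fact $0$-truncated.

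For the inductive step, with $G$ truncated at level $n\geq 1$, I would split $G$ via its Postnikov fiber sequence $N\to G\to Q$, where $Q=\tau_{\leq n-1}G$ is of strictly lower truncation and $N=K(\pi_n G,n)$. Standard Galois theory yields an intermediate factorization $R\to R':=S^{hN}\to S$ in which $R\to R'$ is $Q$-Galois and $R'\to S$ is $N$-Galois; both inherit the $C$-affineness and dualizability hypotheses from the iterated-loops conditions on $G$. The fiber sequence
\[
\Map_{\CAlg_R(C)}(S,T)\to \Map_{\CAlg_R(C)}(R',T)
\]
with fibers $\Map_{\CAlg_{R'}(C)}(S,T)$ over each factorization $R'\to T$, together with the fact that truncated spaces are closed under fiber sequences, reduces matters to truncatedness of the two Galois components. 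The $Q$-Galois part follows from the inductive hypothesis.

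For the remaining $K(A,n)$-Galois part with $n\geq 1$, I would set up a sub-induction on $n$, using that $\Omega K(A,n)=K(A,n-1)$ continues to satisfy the affineness/dualizability hypotheses. The main obstacle is precisely this step: since $K(A,n)$ has only one non-trivial Postnikov layer, one cannot split it by an intermediate Galois extension, so instead the iterated affineness must be leveraged directly. The plan here is to use the dualizability of $\one_C[K(A,n)]$ to rewrite the cotensor $S^{K(A,n)}$ as $S\otimes D(\one_C[K(A,n)])$, and then to analyze $\Map_{\CAlg_S(C)}(S^{K(A,n)},-)$ via the chromatic Fourier duality (in the spirit of the upcoming subsection using \cite{Fourier}) to transfer the problem to a computation one loop level lower. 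Iterating the sub-induction terminates in the discrete case, completing the proof.
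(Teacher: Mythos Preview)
Your overall induction-on-truncation-level strategy is reasonable, and the base case and the Postnikov-splitting idea are fine in outline. However, there is a genuine gap in the $K(A,n)$-Galois sub-induction: invoking ``chromatic Fourier duality'' is not legitimate here, because the lemma is stated for an arbitrary $C\in\CAlg(\PrL_\st)$ satisfying the affineness and dualizability hypotheses, not for categories living over $\Sp_{T(n)}$ with roots of unity where the Fourier transform is set up. So as written, the crucial step where you pass from $K(A,n)$ to $K(A,n-1)$ has no proof.

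The paper's argument avoids this difficulty by a cleaner and more elementary maneuver that works uniformly, without any Postnikov splitting or Fourier input. After using Galois descent to reduce to the case where $T$ is an $S$-algebra, one rewrites $\Map_{\CAlg_R}(S,T)\simeq \Map_{\CAlg_S}(S\otimes_R S,T)\simeq \Map_{\CAlg}(F(G,\one_C),T)$, so everything reduces to cotruncatedness of the \emph{trivial} $G$-Galois extension $F(G,\one_C)$. Now comes the key trick you are missing: by a shearing equivalence, the multiplication $F(G,\one_C)\otimes F(G,\one_C)\to F(G,\one_C)$ is identified with $F(G,\one_C)\otimes(F(G,\one_C)\to\one_C)$, so it suffices to show that the evaluation-at-basepoint map $F(G,\one_C)\to\one_C$ is cotruncated. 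Passing to the identity component $G_1$ (harmless since finite products are cotruncated), this map is a faithful $\Omega(G,1)$-Galois extension by affineness, and the induction hypothesis applies directly. This is exactly the step that lowers the truncation level, and it works for $G=K(A,n)$ without any extra machinery.
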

\begin{proof}
    We proceed by induction on the truncated-ness of $G$. 

    Let $R\to S$ be a $G$-Galois extension and $T$ a commutative $R$-algebra. By Galois descent, we have $\Map_{\CAlg_R}(S,T)\simeq \Map_{\CAlg_R}(S,S\otimes_R T)^{hG}$ so it suffices to prove the claim for $T$ an $S$-algebra. In that case $$\Map_{\CAlg_R}(S,T)\simeq \Map_{\CAlg_S}(S\otimes_R S,T)\simeq \Map_{\CAlg_S}(F(G,S),T)\simeq \Map_{\CAlg}(F(G,\one_C),T)$$ so it suffices to prove the claim for the trivial Galois extension $F(G,\one_C)$. But now the map $F(G,\one_C)\otimes F(G,\one_C)\to F(G,\one_C)$ is equivalent, via shearing, to $F(G,\one_C)\otimes (F(G,\one_C)\to~\one_C)$ so it suffices to prove that the map $F(G,\one_C)\to \one$ is cotruncated, and thus, using the fact that products are cotruncated, we reduce to the same statement about $F(G_1, \one_C)$, where $G_1$ is the component of $1$ in $G$. The map $F(G,\one_C)\to \one_C$ is now a faithful (using affineness) $\Omega(G,1)$-Galois extension and therefore the claim follows by our induction hypothesis.
\end{proof}
\begin{cor}
    Let $C\in\CAlg(\PrL_\st)$ and $G$ a truncated, finite type group in spaces such that $G$-Galois extensions in $C$ are cotruncated. Suppose $\one_C[G]$ is dualizable and $G$ is $C$-affine. 
    
    Suppose further that $F(G,\one_C)$ is a compact commutative algebra in $C$. In this case, for any $G$-Galois extension $R\to S$ in $C$, $S$ is compact in $\CAlg_R(C)$.
\end{cor}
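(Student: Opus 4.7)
My plan is to mirror the strategy of Lemma \ref{lm:compactIdemcompactGalois}, but to replace the set-level fact that fixed points for a finite group commute with filtered colimits by a more delicate argument leveraging the cotruncatedness hypothesis. First, I would apply the Galois descent lemma to rewrite, for any $T \in \CAlg_R(C)$,
$$\Map_{\CAlg_R(C)}(S, T) \simeq \Map_{\CAlg_R(C)}(S, S\otimes_R T)^{hG},$$
and then base change to $\CAlg_S(C)$, using the Galois identification $S\otimes_R S \simeq F(G, S)$ together with the equivalence $F(G, S) \simeq F(G, \one_C) \otimes_{\one_C} S$ coming from dualizability of $\one_C[G]$ (so that $F(G, -) \simeq \one_C[G]^\vee \otimes -$). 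This identifies the inner functor with $T \mapsto \Map_{\CAlg_S(C)}(F(G, \one_C) \otimes S, S\otimes_R T)$. Since $F(G, \one_C)$ is by hypothesis compact in $\CAlg(C)$, and base change along $\one_C \to S$ preserves compact objects (its right adjoint is the forgetful, which preserves filtered colimits), $F(G, \one_C) \otimes S$ will be compact in $\CAlg_S(C)$; combined with the fact that $T \mapsto S \otimes_R T$ preserves filtered colimits, the inner functor will preserve filtered colimits.

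The main obstacle will then be handling the outer $(-)^{hG}$, which does not in general commute with filtered colimits. Here the two remaining hypotheses should interact: cotruncatedness of $G$-Galois extensions ensures that the inner functor above lands in $n$-truncated spaces uniformly for some $n$, while $G$ being truncated of finite type means $BG$ admits a CW model with finitely many cells in each dimension. The abstract claim I would then isolate and prove is: for such a $G$, the functor $(-)^{hG}$ preserves filtered colimits when restricted to diagrams of uniformly $n$-truncated $G$-spaces. The cleanest route seems to be the homotopy fixed points spectral sequence
$$E_2^{p,q} = H^p(BG; \pi_q X) \Rightarrow \pi_{q-p} X^{hG},$$
which for $n$-truncated $X$ is concentrated in finitely many bidegrees and converges strongly. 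Each $H^p(BG; -)$ commutes with filtered colimits of local coefficient systems because $BG$ is of finite type, and strong convergence in a bounded strip then transports this commutation from the $E_2$-page to the abutment. Alternatively, one could induct on the Postnikov tower of $X$, reducing to the same cohomological input. Combining this with the previous paragraph would show that $\Map_{\CAlg_R(C)}(S, -)$ preserves filtered colimits, so that $S$ is compact in $\CAlg_R(C)$.
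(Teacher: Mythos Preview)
Your proposal is correct and follows essentially the same route as the paper, whose proof is a single sentence: ``This is the same argument as in \Cref{lm:compactIdemcompactGalois}, using the truncatedness.'' You have correctly identified both the Galois-descent reduction to compactness of $F(G,\one_C)\otimes S$ and the key remaining point, namely that $(-)^{hG}$ preserves filtered colimits of uniformly truncated $G$-spaces once $G$ is truncated of finite type.

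The only place where you diverge slightly from the paper is in how you justify this last commutation. The paper never spells it out here, but in the proof of the subsequent \Cref{prop:bootstrap} the argument proceeds by observing that on $d$-truncated $G$-spaces one may replace $BG$ by $\tau_{\leq k}BG$ for suitable $k$, and the latter is compact in $\Ss_{\leq k}$ by the finite-type hypothesis, whence $\lim_{BG}$ is corepresented by a compact object. Your spectral sequence route is morally the same but requires a bit more care in the unstable unpointed setting (nonabelian $\pi_1$, fringe effects); your alternative via the Postnikov tower, or the direct compactness argument, is cleaner and is what the paper has in mind.
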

\begin{proof}
    This is the same argument as in \Cref{lm:compactIdemcompactGalois}, using the truncatedness. 
\end{proof}
\begin{lm}\label{lm:calgcolim}
    Let $C_\bullet: I\to\CAlg(\PrL)$ be a filtered diagram with colimit $C_\infty$. The induced map $\colim_I \CAlg(C_i)\to \CAlg(C_\infty)$ is an equivalence in $\PrL$. 
\end{lm}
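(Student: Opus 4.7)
The plan is to reduce the assertion to two standard facts from higher algebra. The first is that the forgetful functors $\CAlg(\PrL) \to \PrL \to \widehat{\Cat}$ each preserve filtered colimits: for the second arrow this is the well-known fact that $\PrL$ is closed under filtered colimits in $\widehat{\Cat}$ (see \cite[\S 5.5.7]{HTT}), and for the first it follows from the general principle that the forgetful functor $\CAlg(D) \to D$ creates sifted colimits for any presentably symmetric monoidal $D$. The second fact, specialized to the commutative operad, is that $\CAlg \colon \CAlg(\widehat{\Cat}) \to \widehat{\Cat}$ preserves sifted, hence in particular filtered, colimits, which is a standard consequence of \cite[\S 3.2.3]{HA}.

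Combining these, letting $C_\infty := \colim_I C_i$ be the colimit in $\CAlg(\PrL)$, the first family of facts implies that $C_\infty$ is also the underlying filtered colimit of $C_\bullet$ computed in $\widehat{\Cat}$. The second fact then gives an equivalence in $\widehat{\Cat}$,
$$\colim_I \CAlg(C_i) \xrightarrow{\;\sim\;} \CAlg(C_\infty),$$
where the colimit on the left is still formed in $\widehat{\Cat}$. Since each $\CAlg(C_i)$ is presentable (as $C_i$ is presentably symmetric monoidal, cf.\ \cite[\S 3.2.3]{HA}), and filtered colimits of presentable $\infty$-categories in $\widehat{\Cat}$ coincide with those formed in $\PrL$, the same map is an equivalence in $\PrL$.

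The only real subtlety lies in the bookkeeping: one must carefully track which ambient category each colimit is being formed in, and check that the comparison map arising from the universal property of $\colim_I \CAlg(C_i)$ in $\PrL$ agrees with the one obtained in $\widehat{\Cat}$. Both checks are standard and I expect no genuine obstacle; the result is essentially a corollary of Lurie's general machinery, packaged so as to respect presentability.
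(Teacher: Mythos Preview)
Your argument has a genuine gap: the claim that the forgetful functor $\PrL \to \widehat{\Cat}$ preserves filtered colimits is false. Colimits in $\PrL$ are computed as limits of the diagram of right adjoints, and for filtered diagrams this typically does \emph{not} agree with the colimit in $\widehat{\Cat}$. For a concrete counterexample, take a filtered system of commutative rings $R_i$ with colimit $R_\infty$ and consider the diagram $i\mapsto \Mod_{R_i}$ with transition functors given by base change. The colimit in $\PrL$ is $\Mod_{R_\infty}$, but the colimit in $\widehat{\Cat}$ is strictly smaller: for a non-compact object such as $M=\bigoplus_{\mathbb N} R_n$, one computes
\[
\Map_{\colim^{\widehat{\Cat}}}(M,R_n)\simeq \colim_k \prod_{\mathbb N} R_k,
\qquad
\Map_{\Mod_{R_\infty}}(M\otimes R_\infty,R_\infty)\simeq \prod_{\mathbb N} R_\infty,
\]
and these differ as soon as the $R_k$ genuinely vary. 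The statement you may be remembering is for $\PrL_\omega$ (via $\Ind$ and small idempotent-complete categories), not for $\PrL$ itself; the reference \cite[\S 5.5.7]{HTT} concerns compactly generated categories and does not contain the claim you need. A secondary issue is that \cite[\S 3.2.3]{HA} treats sifted colimits \emph{inside} $\CAlg(C)$, not the functoriality of $C\mapsto \CAlg(C)$, so your ``second fact'' also lacks the cited justification.

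The paper's proof sidesteps both issues by exploiting exactly the feature of $\PrL$ that breaks your argument: since the colimit $C_\infty$ in $\PrL$ is the limit $\lim_{I\op} C_i$ along the right adjoints $f_i^R$, one instead checks that $C_\infty^{\otimes}\to \lim_{I\op} C_i^{\otimes}$ is a limit diagram of $\infty$-operads. This reduces to verifying that the multi-mapping spaces agree, i.e.\ that
\[
\Map_{C_\infty}(x_1\otimes\cdots\otimes x_n,\,y)\;\xrightarrow{\ \sim\ }\;\lim_{I\op}\Map_{C_i}\bigl(f_i^R(x_1)\otimes\cdots\otimes f_i^R(x_n),\,f_i^R(y)\bigr),
\]
which unwinds (using that the $f_i$ are strong symmetric monoidal and that filtered colimits commute with finite tensor products) to the known identity $\colim_i f_i f_i^R(x)\simeq x$. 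Since $\CAlg(-)$, being a hom-type construction, preserves limits of operads, the conclusion follows. In short: rather than pushing the colimit through $\CAlg$ in $\widehat{\Cat}$, the paper pulls it back to a limit and pushes \emph{that} through $\CAlg$.
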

\begin{proof}
    We consider instead the diagram of right adjoints and the induced map $$\CAlg(C_\infty)\to \lim_{I\op}\CAlg(C_i).$$ 

    It suffices to prove that $C_\infty^{\otimes}\to \lim_{I\op}C_i^\otimes$ is a limit diagram of $\infty$-operads. Unwinding what this means, we reduce to proving the following claim, where $f_i^R$ denotes the right adjoint of the canonical map $f_i:C_i\to C_\infty$: for all $x_1,...,x_n, y\in C_\infty$, the map $$\Map_{C_\infty}(\otimes_k x_k, y)\to \lim_{I\op}\Map_{C_i}(\otimes_k f_i^R(x_k),f_i^R(y)) $$
    is an equivalence. This amounts to the statement that $\colim_i f_i(\otimes_k f_i^R(x_k))\simeq~\otimes_kx_k$. Since $f_i$ is \emph{strong} symmetric monoidal this amounts to the statement that $\colim_i \otimes_k f_if_i^R (x_k)\simeq~\otimes_k x_k$. Since $I$ is filtered and $\otimes$ commutes with colimits in each variable, we reduce to the statement that for each $x$, $\colim_i f_if_i^R(x)\to x$ is an equivalence, which follows itself from $C_\infty\simeq \lim_{I\op}C_i$ along the right adjoints. 
\end{proof}
\begin{nota}
    Let $\CAlg(\PrL)_{(\omega)}$ denote the pullback $\CAlg(\PrL)\times_{\PrL}\PrL_\omega$.
    Note that this category admits filtered (even sifted) colimits, that are computed underlying. 
\end{nota}
\begin{lm}\label{lm:calgcpt}
      The functor $$\CAlg(\PrL)_{(\omega)}\to\Cat, \mathcal D\mapsto\CAlg(\mathcal D)^\omega$$ is well defined and commutes with filtered colimits. 
\end{lm}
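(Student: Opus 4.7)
The plan is to reduce the statement to Lemma~\ref{lm:calgcolim} together with standard facts about filtered colimits in $\PrL_\omega$. There are two things to verify: that the assignment is functorial (i.e., sends morphisms in $\CAlg(\PrL)_{(\omega)}$ to functors preserving compact objects on $\CAlg$), and that filtered colimits are preserved.

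For well-definedness, let $f\colon \mathcal D \to \mathcal D'$ be a morphism in $\CAlg(\PrL)_{(\omega)}$, i.e., a colimit-preserving symmetric monoidal functor that additionally preserves compact objects. Since $f$ is strong symmetric monoidal, its right adjoint $f^R$ is lax symmetric monoidal, and hence induces $\CAlg(f^R)\colon \CAlg(\mathcal D') \to \CAlg(\mathcal D)$, which is right adjoint to $\CAlg(f)$. By the standard characterization of compact-preserving morphisms in $\PrL$, it suffices to show that $\CAlg(f^R)$ preserves filtered colimits. Because the forgetful functor $\CAlg(-)\to -$ is conservative and preserves filtered colimits, this reduces to $f^R$ preserving filtered colimits, which is equivalent to the hypothesis that $f$ preserves compact objects. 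Restricting $\CAlg(f)$ to compact objects then gives the required functor $\CAlg(\mathcal D)^\omega \to \CAlg(\mathcal D')^\omega$.

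For filtered colimit preservation, let $\mathcal D_\bullet\colon I \to \CAlg(\PrL)_{(\omega)}$ be a filtered diagram with colimit $\mathcal D_\infty$, computed underlying in $\PrL$. Lemma~\ref{lm:calgcolim} identifies $\CAlg(\mathcal D_\infty) \simeq \colim_I \CAlg(\mathcal D_i)$ in $\PrL$. By the previous step, all transition functors in this filtered system preserve compact objects, so this colimit is in fact a filtered colimit in $\PrL_\omega$. Applying $(-)^\omega\colon \PrL_\omega \to \Cat$, which preserves filtered colimits (via the identification of $\PrL_\omega$ with small idempotent-complete categories with finite colimits through $\Ind$), yields the desired equivalence
\[\colim_I \CAlg(\mathcal D_i)^\omega \simeq \CAlg(\mathcal D_\infty)^\omega\]
in $\Cat$.

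The main technical point is the first step—in particular, the identification of $\CAlg(f^R)$ as the right adjoint of $\CAlg(f)$ and the verification that it preserves filtered colimits. The second step is then a formal consequence of the fact that filtered colimits in $\PrL_\omega$ agree with those in $\PrL$ along compact-preserving functors of compactly generated categories.
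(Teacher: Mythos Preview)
Your argument is correct and follows the same overall strategy as the paper: reduce to Lemma~\ref{lm:calgcolim} and then pass through $\PrL_\omega$ using that $(-)^\omega$ preserves filtered colimits there. There is one omission: in the second step you assert that the filtered colimit $\colim_I \CAlg(\mathcal D_i)$ can be computed in $\PrL_\omega$, but you never verify that each $\CAlg(\mathcal D_i)$ is compactly generated, i.e., actually lies in $\PrL_\omega$. The paper handles this (and well-definedness in one stroke) by observing that for $\mathcal D$ compactly generated, $\CAlg(\mathcal D)$ is compactly generated by free commutative algebras on compact objects of $\mathcal D$, and these are manifestly preserved by the transition functors. Your alternative route to well-definedness via the right adjoint preserving filtered colimits is fine and arguably cleaner, but you still need the compact generation of $\CAlg(\mathcal D)$ to place the diagram in $\PrL_\omega$ at all.
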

\begin{proof}
That it is well-defined follows from the fact that for $\mathcal D\in\CAlg(\PrL)_{(\omega)}$, $\CAlg(\mathcal D)$ is compactly generated by free commutative algebras on the compact generators of $\mathcal D$, which are certainly preserved by functors in $\CAlg(\PrL)_{(\omega)}$.

    The forgetful functor $\Cat^{\mathrm{rex, idem}}\to \Cat$ from the category of finitely cocomplete, idempotent-complete categories preserves filtered colimits, so it suffices to prove it with values in the former category. But now $\Ind(-)$ induces an equivalence between that former category and $\PrL_\omega$, so it suffices to prove it there. 

    Now $\PrL_\omega\to \PrL$ also preserves colimits, so ultimately the claim simply follows from the fact that $\mathcal D\mapsto \CAlg(\mathcal D)$ preserves filtered colimits, which is \Cref{lm:calgcolim}. 
\end{proof}
To state the following result, we recall the definition from the introduction: 
\begin{defn}
Let $F:C\to D$ be a functor between categories with filtered colimits. $F$ is called \emph{subfinitary} if for every filtered diagram in $C$, the induced map $\colim_I F(c_i)\to~F(\colim_I c_i)$ is a monomorphism in $D$. 
\end{defn}
\begin{rmk}
    Recall that a morphism $x\to y$ in a category $D$ is a monomorphism if the square 
    \[\begin{tikzcd}
	x & x \\
	x & y
	\arrow[from=1-1, to=1-2]
	\arrow[from=1-1, to=2-1]
	\arrow[from=1-2, to=2-2]
	\arrow[from=2-1, to=2-2]
\end{tikzcd}\] 
is a pullback square. In the category of spaces, this is equivalent to being an inclusion of connected components. 
\end{rmk}
\begin{ex}\label{ex:subfunctor}
If $G$ is a finitary functor, i.e. $G$ commutes with filtered colimits, and if filtered colimits in $D$ are left exact, then any subfunctor of $G$ is subfinitary. 
\end{ex}
\begin{prop}\label{prop:bootstrap}
Let $\C$ be as in \Cref{ass:standing}, and assume furthermore that $\C$ is compactly generated. Let $G$ be a truncated group in spaces such that:
\begin{itemize}
    \item $BG$ all its iterated loop spaces are $\C$-adjointable in the sense of \cite{CCRY}, that is, $\lim_{BG}$ and $\lim_G$ preserve colimits and tensors with objects of $\C$; 
    \item $BG$ is of finite type, that is, $\tau_{\leq k}BG$ is compact in $\Ss_{\leq k}$ for all $k$;
    \item All the iterated loop spaces of $G$ are $\C$-affine;
    \item $F(G,\one_\C)$ is a compact commutative algebra in $\C$
\end{itemize}

In this case, the assignment $\mathcal D\mapsto \Gal_G(\mathcal D)$ is subfinitary, as a functor $(\CAlg(\PrL)_{(\omega)})_{\mathcal C/}\to~\Ss$.  

If $G$ is a \emph{discrete} group, necessarily finite given the other hypotheses, then the above functor is in fact finitary. 
\end{prop}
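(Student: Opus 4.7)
\emph{Proof plan.} The plan is to deduce subfinitary-ness from the finitariness of $\mathcal D \mapsto \CAlg(\mathcal D)^\omega$ (Lemma \ref{lm:calgcpt}), using the compactness and cotruncation of Galois extensions established in Lemma \ref{lm:Galoistrunc} and its corollary. Fix a filtered diagram $\mathcal D_\bullet: I \to (\CAlg(\PrL)_{(\omega)})_{\mathcal C/}$ with colimit $\mathcal D_\infty$, and pick $S_j, T_j \in \Gal_G(\mathcal D_j)$ with images $S_k, T_k, S_\infty, T_\infty$ at later stages. Since $\Gal_G(\mathcal D)$ is a full subgroupoid of $(\CAlg(\mathcal D)^{BG})^{\simeq}_{\one^{\mathrm{triv}}/}$, the subfinitary claim is equivalent to the map
\[
\colim_{k \geq j} \Map_{\CAlg(\mathcal D_k)^{BG}}(S_k, T_k) \to \Map_{\CAlg(\mathcal D_\infty)^{BG}}(S_\infty, T_\infty)
\]
being an equivalence of spaces, after base changing to the fiber over the trivialization and passing to components of equivalences; both of these operations commute with filtered colimits of spaces, so it suffices to work at the level of $\CAlg^{BG}$-mapping spaces.

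The computation proceeds via the identity $\Map_{\CAlg(\mathcal D)^{BG}}(S,T) \simeq \Map_{\CAlg(\mathcal D)}(S,T)^{hG}$, where $G$ acts on the mapping space via its actions on $S$ and $T$. By the corollary following Lemma \ref{lm:Galoistrunc}, Galois extensions are compact commutative algebras; by Lemma \ref{lm:calgcpt} we deduce $\Map_{\CAlg(\mathcal D_\infty)}(S_\infty, T_\infty) \simeq \colim_k \Map_{\CAlg(\mathcal D_k)}(S_k, T_k)$ as $G$-spaces, and by Lemma \ref{lm:Galoistrunc} this colimit is $n$-truncated for some fixed $n$. It remains to check that $(-)^{hG} = \lim_{BG}$ commutes with this filtered colimit. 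Replacing each term by its $n$-truncation (which preserves filtered colimits and does not change the colimit, already being $n$-truncated), we reduce to the statement that $(-)^{hG}$ commutes with filtered colimits of $n$-truncated $G$-spaces. This follows from the finite-type hypothesis: $\Fun(BG, \Ss_{\leq n})$ depends only on $\tau_{\leq n+1}BG$, which is compact in $\Ss_{\leq n+1}$ by hypothesis, so $\lim_{BG}$ restricted to $n$-truncated $G$-spaces commutes with filtered colimits.

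For the finitary improvement in the discrete case, one additionally needs essential surjectivity. Given $S_\infty \in \Gal_G(\mathcal D_\infty)$, first lift the underlying algebra to some $S_i \in \CAlg(\mathcal D_i)^\omega$ via compactness and Lemma \ref{lm:calgcpt}. Next lift the $G$-action: this is a pointed map $BG \to B\Aut_{\CAlg(\mathcal D_\infty)}(S_\infty)$ whose target is truncated by Lemma \ref{lm:Galoistrunc}, hence factors through a finite truncation of $BG$, which is compact in the appropriate truncated category of spaces; the lifting then follows from the mapping space identity $\Aut_{\CAlg(\mathcal D_\infty)}(S_\infty) \simeq \colim_{k\geq i}\Aut_{\CAlg(\mathcal D_k)}(S_k)$. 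Finally, the two Galois equivalences $R_k \to S_k^{hG}$ and $S_k \otimes_{R_k} S_k \to F(G, S_k)$ are maps between compact objects which become equivalences in $\mathcal D_\infty$, hence hold at some larger finite stage.

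\textbf{The main obstacle} is the faithfulness condition. Conservativity of $S_\infty \otimes -$ does not straightforwardly descend along $\mathcal D_k \to \mathcal D_\infty$ (this functor is not conservative in general), so a priori we cannot conclude that the lift $S_k$ is faithful. For $G$ finite discrete, this can be handled by a separate argument exploiting the explicit descent equivalence $S_k \otimes_{R_k} S_k \simeq \prod_G S_k$ at a sufficiently large stage to detect nonzero $R_k$-modules through $S_k \otimes_{R_k}-$, which is why only the discrete case yields the full finitariness rather than the weaker subfinitariness.
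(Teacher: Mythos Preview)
Your argument for the subfinitariness half is correct and is essentially the paper's proof unpacked: the paper phrases it abstractly as ``$\Gal_G(-)$ is a subfunctor of the finitary functor of $d$-cotruncated compact $G$-commutative algebras, hence subfinitary by \Cref{ex:subfunctor}'', whereas you compute the mapping spaces directly via $(-)^{hG}$ and use the same two ingredients (compactness of Galois extensions in $\CAlg$, and truncatedness plus finite type of $BG$ to commute $\lim_{BG}$ past the filtered colimit).

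There is, however, a genuine gap in your treatment of the discrete case. You write that the Galois condition maps
\[
\one_{\mathcal D_k}\to S_k^{hG}\qquad\text{and}\qquad S_k\otimes_{\one_{\mathcal D_k}} S_k\to \prod_G S_k
\]
are ``maps between compact objects which become equivalences in $\mathcal D_\infty$, hence hold at some larger finite stage''. But this is exactly the step that fails: the standing assumption (\Cref{ass:standing}) does \emph{not} require the unit of $\mathcal D_k$ to be compact, and compactness of $S_k$ in $\CAlg(\mathcal D_k)$ does not give compactness of the underlying object, of $S_k^{hG}$, or of the relevant cofibers. So there is no way to detect, from the colimit alone, the stage at which these maps become equivalences. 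The paper flags this explicitly (``Because the unit is only compact after tensoring with $V$, we will need to fight a bit'') and works around it by passing to $\Mod_V(\mathcal D_k)$ where the unit \emph{is} compact, running a version of your argument there using the separability machinery of \cite{MeSep} (and $\mathbb E_3$-algebras), and then transporting the conclusions back via conservativity of $V\otimes -$. Your identification of faithfulness as the ``main obstacle'' is therefore misplaced: it is a secondary issue, and the real obstruction---which your proposal does not address---is the non-compactness of the unit.
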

\begin{proof}
\newcommand{\ho}{\mathrm{ho}}
We first note that all the properties of $G$ relative to $\C$ are inherited by $\mathcal D$'s equipped with a map $\C\to \mathcal D$ in $\CAlg(\PrL)$. 

We then note that the first bullet point also guarantees that $\one_\C[G]$ is dualizable, cf. \cite[Proposition 4.33]{CCRY}, and similarly for all the iterated loop spaces of $G$.  

Thus by \Cref{lm:Galoistrunc} and the third bullet point, all $G$-Galois extensions are cotruncated. 

It follows then by combining Galois descent, compactness of $\tau_{\leq k} BG$ and compactness of $F(G,\one_\C)$ in commutative algebras that all $G$-Galois extensions are compact commutative algebras. 

As a final preliminary observation, we note that \Cref{lm:calgcpt} states exactly that $\CAlg(-)^\omega$ is a finitary functor. 

Notice that for any $d$, the constant colimit $\colim_{S^{d+1}}$ in commutative algebras preserves compact commutative algebras. Hence, the property of being $d$-cotruncated, which is equivalent to the fold map $\colim_{S^{d+1}} A\to A$ being an equivalence, is preserved by the transition functors, so that we get a subfunctor of $d$-cotruncated commutative algebras. This is a finitary subfunctor of $\CAlg(-)^\omega$, and it has values in $(d+1)$-categories, so that applying $(-)^{BG}$ to it preserves finitariness. 

Finally, $\Gal_G(-)$ is a subfunctor of this functor of $d$-cotruncated commutative algebras with a $G$-action, and hence by \Cref{ex:subfunctor}, is a subfinitary functor. 

We are left with the statement that for discrete, finite $G$, this functor is actually finitary. Because the unit is only compact after tensoring with $V$, we will need to fight a bit. We fix colimit diagram $\mathcal D_\infty\simeq \colim_i \mathcal D_i$, and $\one_{\mathcal D_\infty}\to S$ a $G$-Galois extension in $\mathcal D_\infty$. By subfinitariness, we only need to prove that it lifts to a $G$-Galois extension $S_i$ of $\one_{\mathcal D_i}$ for some $i$.

First, we note that what we said so far had little to do with commutative algebras and so could be done also with $\mathbb E_3$-algebras, say. To make everything truly work, we simply note that for $G$ a discrete group and $S$ a $G$-Galois extension of $\one_{\mathcal D_\infty}$, $S$ is \emph{also} compact and $0$-cotruncated as an $\mathbb E_3$-algebra. This follows from \cite[Corollary 1.3.39]{MeSep}.

Thus, by finitariness of $C\mapsto \Alg_{\mathbb E_3}(C)^\omega$, we already know that there is a compact, $0$-cotruncated $\mathbb E_3$-algebra with $G$-action $S_i$ which basechanges to $S$. We are going to try to prove that $S_i$ is underlying dualizable. 

For this we switch gears a little bit and tensor with the $V$ that is guaranteed by \Cref{ass:standing}. We note that with a compact unit, we can consider the functor  $C\mapsto \CAlg(\ho(C^\omega))$, which is easily seen to be finitary, because of the $\ho$. Thus,  $S\otimes V$ admits a lift $T_i$ to some $\CAlg(\ho(\Mod_V(\mathcal D_i)^\omega))$. Since $S$ and hence $S\otimes V$ is homotopy separable in $\Mod_V(\mathcal D_\infty)$, in the sense of \cite[Variant 1.1.3]{MeSep}, $T_i$ can be chosen to be homotopy separable for $i$ large enough, and thus it has, by \cite[Theorem 1.3.19]{MeSep}, a unique lift to an $\mathbb E_3$-algebra $T_i\in\Alg_{\mathbb E_3}(\Mod_V(\mathcal D_i))$, which is underlying compact. Furthermore, the map $T_i\otimes T_i\to \prod_G T_i$ is now a map between compact objects in $\Mod_V(\mathcal D_i)$, which becomes an equivalence in the colimit and hence is an equivalence at some finite stage. 

But now after tensoring with $V$, $(-)^{hG}$ preserves compacts and therefore also the map $\one_{\mathcal D_i}\to T_i^{hG}$ is a map between compacts which becomes an equivalence in the colimit and hence becomes an equivalence at a finite stage. In other words, $T_i$ is a(n $\mathbb E_3$-variant of a) $G$-Galois extension of $V\otimes\one_{\mathcal D_i}$. 

In particular, by \cite[Theorem 1.3.39]{MeSep}, separability and underlying compactness guarantee that $T_i $ is actually compact as an $\mathbb E_3$-algebra. So now, by finitariness of the functor $C\mapsto \Alg_{\mathbb E_3}(C)^\omega$, it follows that $S_i\otimes V\simeq T_i$, $G$-equivariantly for $i$ large enough, and therefore that $S_i\otimes S_i\to \prod_G S_i$ is an equivalence for $i$ large enough, since it is an equivalence after tensoring with $V$, and the same is true for $\one_{\mathcal D_i}\to S_i^{hG}$. 

In particular, the first point guarantees that $S_i$ is separable, cf. \cite[Section 1.5.1]{MeSep}, and so by \cite[Theorem 1.3.19]{MeSep} lifts uniquely to a commutative algebra, and its $G$-action too. Thus it is a faithful $G$-Galois extension which maps to $S$, as was to be constructed.
\end{proof}
\begin{ex}\label{ex:T(n)cpct}
    For every $n$, $\Sp_{T(n)}$ is compactly generated, though its unit is not compact. It satisfies the hypotheses of the previous proposition for every $n$-truncated $p$-finite group $G$. 
\end{ex}
In fact, the above example can be generalized: 
\begin{lm}\label{lm:etcpct}
    Let $X$ be a qcqs scheme. The category $\Sh(X_\et,\Sp_{T(n)})$ of étale sheaves on $X$ with values in $T(n)$-local spectra is compactly generated.

    Furthermore, for any étale map $Y\to X$, the pullback functor $\Sh(X_\et,\Sp_{T(n)})\to \Sh(Y_\et,\Sp_{T(n)})$ preserves compacts, or equivalently has a colimit-preserving right adjoint. 
\end{lm}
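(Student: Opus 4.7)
The plan is to exhibit an explicit family of compact generators of $\Sh(X_\et,\Sp_{T(n)})$ and verify the pullback statement by direct inspection on that family.

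For the generators, I take the sheaves $j_!\, M$, where $j\colon U\to X$ ranges over affine étale maps to $X$ and $M$ ranges over a set of compact generators of $\Sp_{T(n)}$ -- for instance shifts of $L_{T(n)}(\Sph/p^k)$, which are compact by Burklund's work recalled in \Cref{ex:T(n)standingass}. Here $j_!$ is the left adjoint to evaluation at $U$, given explicitly by sheafifying the $T(n)$-local presheaf $V\mapsto M\otimes\Sph_{T(n)}[\Map_{X_\et}(V,U)]$. Since the affine étale maps form a basis of the site, any sheaf $F$ with $\Map(j_!\,M,F)\simeq 0$ for all such $(U,M)$ has $F(U)\simeq 0$ on the whole basis, hence $F\simeq 0$; so these objects generate under colimits.

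The crux is compactness. By adjunction, $\Map(j_!\,M,F)\simeq \Map_{\Sp_{T(n)}}(M,F(U))$, and $M$ is compact, so compactness of $j_!\,M$ reduces to showing that the evaluation $F\mapsto F(U)$ preserves filtered colimits on $\Sh(X_\et,\Sp_{T(n)})$ for each affine étale $U\to X$. Equivalently, $T(n)$-local étale sheafification must preserve filtered colimits of presheaves on $X_\et$. This is the main technical point, and the main obstacle. It amounts to a bounded cohomological dimension assertion for $T(n)$-local étale sheaves on the qcqs scheme $X$: because $T(n)$-local coefficients are (compactly) built out of $p$-primary torsion, and each affine object of $X_\et$ has uniformly bounded $\mu_{p^\infty}$-cohomological dimension, the \v Cech-type totalization computing sheafification converges after finitely many steps in a controlled way. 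One establishes this either by citation to the Clausen--Mathew hyperdescent framework for chromatically localized coefficients, or by a direct finite-depth argument on a basis of the site.

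For the pullback statement, observe that for étale $f\colon Y\to X$, pullback sends $j_!\,M$ to $(j')_!\,M$, where $j'\colon U\times_X Y\to Y$ is affine étale whenever $U\to X$ is. Thus $f^*$ carries the chosen compact generators of $\Sh(X_\et,\Sp_{T(n)})$ to compact generators of $\Sh(Y_\et,\Sp_{T(n)})$. Since $f^*$ is a left adjoint and preserves compact generators, it preserves all compact objects. The equivalent statement that the right adjoint $f_*$ preserves filtered colimits is then automatic.
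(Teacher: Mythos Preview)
Your setup is right: the objects $j_!\,M$ for $j\colon U\to X$ affine \'etale and $M\in\Sp_{T(n)}^\omega$ are the natural candidate generators, and compactness reduces exactly to showing that evaluation $F\mapsto F(U)$ preserves filtered colimits of sheaves, equivalently that $T(n)$-local sheaves are closed under filtered colimits computed in presheaves. Your argument for the ``furthermore'' part via $f^*(j_!\,M)\simeq (j')_!\,M$ is also correct and in fact cleaner than the paper's diagram chase.

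The gap is in your justification of the key step. You claim that affine objects of $X_\et$ have uniformly bounded $\mu_{p^\infty}$-cohomological dimension, but this is false for a general qcqs scheme: take $X=\Spec\bigl(\mathbb C(t_1,t_2,\dots)\bigr)$, which is affine and qcqs but has infinite mod $p$ \'etale cohomological dimension. So the \v Cech totalization need not converge in finitely many steps, and the Clausen--Mathew results you allude to do not supply the needed statement at this generality either.

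The paper bypasses cohomological dimension entirely. It uses the decomposition ``\'etale sheaf $=$ Nisnevich sheaf $+$ finite Galois descent'' from \cite[Corollary B.7.6.2]{SAG}. The Nisnevich condition is a finite-limit condition (certain squares go to pullback squares), which is closed under arbitrary colimits because $\Sp_{T(n)}$ is stable. The Galois condition asks that $F(U)\simeq F(V)^{hG}$ for each $G$-Galois cover $V\to U$; this is closed under colimits precisely because $\Sp_{T(n)}$ is $1$-semiadditive (Kuhn's Tate vanishing), so $(-)^{hG}$ preserves all colimits for finite $G$. Thus sheaves are closed under \emph{all} colimits in presheaves, which is strictly stronger than what you need and requires no dimension hypothesis. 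This is the missing idea: the special feature of $\Sp_{T(n)}$ being used is $1$-semiadditivity, not any torsion or cohomological-dimension bound.
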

\begin{proof}
    The category of étale presheaves $\Psh(X_\et,\Sp_{T(n)})\simeq \Psh(X_\et)\otimes\Sp_{T(n)}$ is compactly generated by the previous example. 

    It therefore suffices to prove that the inclusion functor $\Sh(X_\et,\Sp_{T(n)})\to \Psh(X_\et,\Sp_{T(n)})$ preserves colimits, i.e. that sheaves are closed under colimits. By \cite[Corollary B.7.6.2]{SAG}, $\mathcal F$ is an étale sheaf if and only if it is a Nisnevich sheaf and satisfies Galois descent. The first condition is, by \cite[Theorem 3.7.5.1]{SAG}, a condition that certain squares be sent to pullbacks, and since $\Sp_{T(n)}$ is stable, this is closed under colimits. The second condition is the a priori complicated one (which fails for $\Sp$ in place of $\Sp_{T(n)})$ as it involves fixed points for a finite group. However, $\Sp_{T(n)}$ is $1$-semiadditive by Kuhn's Tate vanishing theorem \cite[Theorem 1.5]{kuhn}, and therefore $(-)^{hG}: \Sp_{T(n)}^{BG}\to\Sp_{T(n)}$ preserves all colimits. 

    For the ``furthermore'' part, consider the following commutative diagram: 
    \[\begin{tikzcd}
	{\Psh(X_\et,\Sp_{T(n)})} & {\Psh(Y_\et,\Sp_{T(n)})} \\
	{\Sh(X_\et,\Sp_{T(n)})} & {\Sh(Y_\et,\Sp_{T(n)})}
	\arrow[from=1-1, to=1-2]
	\arrow[from=1-1, to=2-1]
	\arrow[from=1-2, to=2-2]
	\arrow[from=2-1, to=2-2]
\end{tikzcd}\]
Being a left Kan extension, the top functor has a colimit-preserving right adjoint. Both vertical arrows have colimit-preserving, fully faithful right adjoints. It follows that the bottom map also has a colimit-preserving right adjoint (this is elementary, but see also \cite[Lemma 1.30]{Dbl}). 
\end{proof}

\subsection{The chromatic Fourier transform and $\Gpic$}
In this section, we begin with brief recollections on Barthel--Carmeli--Schlank--Yanovski's chromatic Fourier transform from \cite{Fourier}, and we follow them with our main application, namely the key finitary-ness property \Cref{thm:subfinintro}.  

The following was inspired by discussions with Shai Keidar, who proved the same result (though as far as I understand, earlier): 
\begin{prop}\label{prop:Kummer1up}
   Let $H$ be an ordinary $\Z/p^r$-module. For every $0\leq k\leq n$, there is an equivalence, natural in the $T(n)$-local commutative $\Sph_{T(n)}[\omega^{(n)}_{p^r}]$-algebra $R$, of the form $$\Gal_{B^kH^*}(R)\simeq \Map(\Sigma^{n-k}H,\PPic(R))$$
   More generally, one can replace $R$ by $C\in \CAlg(\PrL)_{\Mod_{\Sph_{T(n)}[\omega^{(n)}_{p^r}]}(\Sp_{T(n)})/}$ and get a similar natural equivalence $$\Gal_{B^kH^*}(C)\simeq \Map(\Sigma^{n-k}H,\PPic(C))$$
\end{prop}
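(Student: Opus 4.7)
The strategy is to translate both sides of the claimed equivalence into a common Fourier-theoretic language over $\Sph_{T(n)}[\omega^{(n)}_{p^r}]$ and apply the chromatic Fourier transform of \cite{Fourier} to match them.

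First, I would realize the left-hand side in terms of $\PPic$-cocycle data. For a general $\C\in\CAlg(\PrL)$ and $G$ abelian, the standard classification of $G$-Galois extensions identifies faithful $G$-extensions of $\one_\C$ with the space of $\Z$-linear maps from a shifted copy of $G$ into $\PPic(\C)$ which land in the ``faithful'' components. Concretely, for $G = B^k H^*$, the classifying cocycle is an $\mathbb E_\infty$-map $BG = B^{k+1}H^* \to \mathbf{B}\PPic(\C)$, and unwinding the deloopings (together with the $\Z$-linearity coming from the ambient stability) produces a $\Z$-linear map
\[
\Sigma^{k+1} H^* \to \Sigma \PPic(\C),
\]
i.e.\ a $\Z$-linear map $\Sigma^k H^* \to \PPic(\C)$, plus the faithfulness condition (which cuts out a union of connected components that will turn out to correspond exactly to the target's $\Gal$-components).

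Second, I would invoke the chromatic Fourier transform of \cite{Fourier}: over any $R$ containing $\omega^{(n)}_{p^r}$, the self-duality (up to an $n$-fold shift) of the relevant Eilenberg--MacLane pieces provides a natural equivalence
\[
\Map_\Z(\Sigma^k H^*, \PPic(R)) \simeq \Map_\Z(\Sigma^{n-k}H, \PPic(R)),
\]
as this is precisely the kind of ``Pontryagin duality at height $n$'' that the Fourier transform produces. Combining this with the identification of Step 1 and tracking the faithfulness condition (which corresponds to the union of connected components picked out by nontrivial $\Z$-linear maps) yields the desired equivalence $\Gal_{B^k H^*}(R) \simeq \Map(\Sigma^{n-k}H, \PPic(R))$. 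The version for general $\C\in\CAlg(\PrL)_{\Mod_{\Sph_{T(n)}[\omega^{(n)}_{p^r}]}/}$ follows because both constructions are manifestly functorial in $\C$ and agree on the universal case of $\Mod_R$ via the Fourier transform.

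The principal obstacle is the careful identification in Step 1 of the classifying data for a $B^k H^*$-Galois extension as a genuine $\Z$-linear map into $\PPic$ (rather than merely an $\mathbb E_\infty$-map of spaces) with the correct faithfulness. This requires care because the classification needs to interact properly with the $\Z$-linear structure on $\PPic$ that is provided by the chromatic Fourier transform; fortunately, everything in sight is abelian and the higher cocycles assemble into spectra, so this should come down to a careful unwinding. A secondary point is bookkeeping of the shift: the Fourier duality exchanges a shift of $k$ with a shift of $n-k$, absorbing the height-$n$ self-dual twist, and matching this with the $\Z$-linear classification of $B^k H^*$-extensions must be done carefully to ensure the indices line up as stated.
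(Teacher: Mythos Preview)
Your Step~1 contains a genuine gap: there is no general classification of faithful $B^kH^*$-Galois extensions as $\Z$-linear maps $\Sigma^k H^* \to \PPic(\C)$. What you describe would itself be a Kummer-type theorem (essentially ``height $0$'' Kummer theory for the group $B^kH^*$), and no such statement holds without further orientation hypotheses. Even for $k=0$, the identification $\Gal_{H^*}(\C)\simeq\Map(H^*,\PPic(\C))$ fails without ordinary roots of unity, and the height-$n$ roots present in $\Sph_{T(n)}[\omega^{(n)}_{p^r}]$ do not supply these. Likewise, Step~2 is not what the chromatic Fourier transform provides: it is an equivalence of (categories of) commutative algebras exchanging $R[M]$ with $R^{I_p^{(n)}M}$, not a self-duality of $\PPic(R)$ allowing one to swap $\Sigma^kH^*$ for $\Sigma^{n-k}H$ on the mapping-in side.

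The paper's argument is structurally different: it works one categorical level up, in $\Mod_C(\PrL)$. The $(\Z/p^r,n)$-orientation on $C$ induces a $(\Z/p^r,n+1)$-orientation on $\Mod_C(\PrL)$, and the Fourier transform at height $n+1$ then gives a symmetric monoidal equivalence $C[\Sigma^{n-k}H]\simeq C^{B^{k+1}H^*}$ in $\CAlg_C(\PrL)$. One then identifies what each side corepresents: $C[\Sigma^{n-k}H]$ tautologically corepresents $\Map(\Sigma^{n-k}H,\PPic(-))$, while $C^{B^{k+1}H^*}$ corepresents $\Gal_{B^kH^*}(-)$ by affineness of $B^{k+1}H^*$ (\cite[Proposition~2.30]{Fourier}). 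A final check ensures the Galois extensions in $\Mod_C(\PrL)$ so obtained are of the form $\Mod_S(D)$ for a Galois extension $S$ \emph{inside} $D$. The missing idea in your sketch is precisely this ``go up one level and apply Fourier at height $n+1$ to match the corepresenting objects'' manoeuvre.
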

\begin{rmk}
    The proof of this proposition makes heavy use of \cite{Fourier} and its terminology. For this reason, it is difficult to recall all the needed terminology without essentially copying large swaths of \textit{loc. cit.}. We shall not need the specifics of the proof later on, only the statement, so the reader unfamiliar with \cite{Fourier} is invited to take the above proposition as a blackbox on first reading.  
\end{rmk}
\begin{proof}
Let $C\in\CAlg(\PrL)$ be equipped with a map from $\Mod_{\Sph_{T(n)}[\omega^{(n)}_{p^r}]}(\Sp_{T(n)})$. By \cite[Proposition 7.27,Corollary 6.5, Corollary 6.7, Proposition 4.5, Corollary 5.16]{Fourier}, 
 we find that: \begin{center}
      $\Sp_{T(n)}[\omega^{(n)}_{p^r}]$ is $(\Z/p^r,n)$-oriented, and therefore $\Mod_C(\PrL)$ is itself $(\Z/p^r,n+1)$-oriented.
 \end{center}

We conclude from this the following two facts: 
\begin{itemize}
    \item By \cite[Proposition 4.30]{Fourier}, $B^kH$ is $\Mod_C(\PrL)$-affine for all $0\leq k\leq n+1$; 
    \item The Fourier transform gives a canonical symmetric monoidal, $C$-linear equivalence $C[\Sigma^{n-k}H]\simeq C^{B^{k+1}H^*}$ - here we use that $I^{(n+1)}_p(\Sigma^{n-k}H)\simeq B^{k+1}H^*$. 
\end{itemize}
The first bullet point together with \cite[Proposition 2.30]{Fourier} guarantees that $C^{B^{k+1}H^*}$ corepresents, in $\CAlg_C$, $B^kH^*$-Galois extensions\footnote{Note that unlike in \cite{Fourier}, we call $G$-Galois extensions the notion where the Galois \emph{group} is $G$, as opposed to the Galois space. This explains the shift by $1$, cf. \Cref{warn:notationGal}.}.

On the other hand, $C[\Sigma^{n-k}H]$ clearly corepresents $\Map(\Sigma^{n-k}H,\PPic(-))$.

To conclude, we now simply have to note that if $D$ is a commutative $C$-algebra and $F:C^{B^{k+1}H^*}\to D$ a commutative $C$-algebra map, the induced Galois extension of $D$, as a commutative algebra in $\Mod_C(\PrL)$, is of the form $\Mod_R(D)$ for some Galois extension \emph{in} $D$. For this, we let $D\to D'$ be a $B^kH^*$-Galois extension, and note that the precomposition with the right adjoint $D'\to D\to D'$ is equivalent to the composite $D'\to \Fun(B^kH^*, D')\xrightarrow{\lim}D'$ where the first functor is induced by the $B^kH^*$-action - this follows since $D\simeq (D')^{hB^kH^*}$. In particular, since $B^kH^*$ is $D'$-affine, this composite is conservative and hence so is the right adjoint $D'\to D$. 

Since it is a $D$-commutative algebra basechanged from $C^{B^{k+1}H^*}\to C$, which clearly has the projection formula, it also has the projection formula and hence conservativity guarantees that it is affine, hence of the form $\Mod_R(D)$ for some $B^kH^*$-Galois extension $R$ of $\one_D$. \end{proof}

We now use the chromatic Fourier transform to relate Galois extensions and strict Picard elements to obtain a proof of \Cref{thm:subfinintro}:
\begin{thm}\label{thm:finitary}
    For any finite $p$-power torsion abelian group $H$, the functor $R\mapsto \Map_\Z(\Sigma^{n-1}H,\Gpic(R))$, as a functor of $T(n)$-local commutative algebras, is subfinitary.

    Here, $\Gpic(R):= \Map_{\Sp_{\geq 0}}(\Z,\PPic(\Mod_R(\Sp_{T(n)})))$. 
\end{thm}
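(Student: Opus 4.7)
The strategy is to translate the functor in question, via the chromatic Fourier transform, into (homotopy fixed points of) a space of Galois extensions, and then invoke the subfinitariness of Galois-extension functors established in \Cref{prop:bootstrap}. First, by the adjunction between the forgetful functor $U\colon\Mod_\Z\to\Sp$ and its right adjoint $\Map_\Sp(\Z,-)$, together with the defining equation $\Gpic(R)=\Map_{\Sp^{\mathrm{cn}}}(\Z,\PPic(R))$, one has a natural equivalence
\[
\Map_\Z(\Sigma^{n-1}H,\Gpic(R))\;\simeq\;\Map_\Sp(\Sigma^{n-1}H,\PPic(R)).
\]

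Next, I would fix $r$ with $p^rH=0$ and set $R':=R\otimes_{\Sph_{T(n)}}\Sph_{T(n)}[\omega^{(n)}_{p^r}]$; this is a faithful $G$-Galois extension of $R$ for $G=(\Z/p^r)^\times$, and crucially the assignment $R\mapsto R'$ preserves filtered colimits. Galois descent for $\PPic$ combined with the chromatic Fourier transform \Cref{prop:Kummer1up} applied with $k=1$ to the $\Sph_{T(n)}[\omega^{(n)}_{p^r}]$-algebra $R'$ then yields
\[
\Map_\Z(\Sigma^{n-1}H,\Gpic(R))\;\simeq\;\Gal_{BH^*}\bigl(\Mod_{R'}(\Sp_{T(n)})\bigr)^{hG},
\]
where $H^*:=\Hom(H,\mu_{p^\infty})$. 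The group $BH^*$ is $1$-truncated and $p$-finite, so by \Cref{ex:T(n)cpct} all hypotheses of \Cref{prop:bootstrap} are satisfied in $\Sp_{T(n)}$; it follows that $\Gal_{BH^*}$ is subfinitary, and composing with the filtered-colimit-preserving $R\mapsto\Mod_{R'}(\Sp_{T(n)})$ we obtain that $R\mapsto\Gal_{BH^*}(\Mod_{R'})$ is subfinitary.

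The remaining and principal step is to propagate subfinitariness through the outer $(-)^{hG}$; I expect this to be the main obstacle. The plan is to exploit \Cref{lm:Galoistrunc}: for fixed $BH^*$, each component of $\Gal_{BH^*}(\Mod_{R'})$ is $d$-truncated with $d$ independent of $R'$. For a filtered diagram $\{X_i\}$ of such spaces and a finite group $G$, the cobar resolution computing $X_i^{hG}$ is effectively bounded in each component, so the assembly map $\colim_i X_i^{hG}\to(\colim_i X_i)^{hG}$ is a monomorphism of spaces; composing it with the monomorphism $(\colim_i X_i)^{hG}\hookrightarrow X_\infty^{hG}$ obtained from the subfinitariness monomorphism $\colim_i X_i\hookrightarrow X_\infty$ by applying $hG$ (limits preserve monomorphisms), one concludes that $R\mapsto\Map_\Z(\Sigma^{n-1}H,\Gpic(R))$ is subfinitary.
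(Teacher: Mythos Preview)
Your approach is essentially the same as the paper's: both unwind the adjunction to $\Map(\Sigma^{n-1}H,\PPic(R))$, descend to the cyclotomic extension $R[\omega^{(n)}_{p^r}]$, apply the Fourier transform \Cref{prop:Kummer1up} to land in $\Gal_{BH^*}$, and invoke \Cref{prop:bootstrap}. Your handling of the outer $(-)^{hG}$ via uniform truncatedness is also correct and matches what the paper does in one sentence.

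One point worth flagging: you cite \Cref{ex:T(n)cpct} as verifying \emph{all} hypotheses of \Cref{prop:bootstrap} for $G=BH^*$, but the substantive hypothesis there is that $F(BH^*,\one)=\one^{BH^*}$ is compact in $\CAlg(\Sp_{T(n)})$. The paper does not actually prove this in that Example; it proves it \emph{inside the proof of this very theorem}, via a second (inductive) application of \Cref{prop:bootstrap} in the discrete-group case for $H^*$: by affineness $\one^{BH^*}$ corepresents $\Gal_{H^*}$, and the discrete case of \Cref{prop:bootstrap} shows this is finitary once $\one^{H^*}$ is compact, which is \Cref{lm:idmcpct}. So the step you labeled the ``main obstacle'' is in fact routine, while the actual content you have swept into a citation of \Cref{ex:T(n)cpct} is this two-step compactness verification.
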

\begin{rmk}
As mentioned in the introduction, we believe that much more is true, namely that one should be able to remove the $\Sigma^{n-1}$ appearing in the statement, and one should be able to prove that the resulting functor is finitary as opposed to subfinitary. 
    However, the functor $R\mapsto \PPic(L_{K(1)}R)$ does \emph{not} preserve filtered colimits. To wit, $\Sigma^2 R$ is different from $R$ for all finite Galois extensions of $\Sph_{K(1)}$, but it is equivalent to $R$ for $R=\KU_p$. Since $\Sigma R$ is $2$-torsion, this is, at the prime $2$, also an example to show that $\PPic[2]$ does not preserve filtered colimits. 
 
    However, $\Sigma^2 R$ is not strict. We do not know whether $\Gpic(L_{K(1)}(-))$ preserve filtered colimits. 
\end{rmk}

\begin{rmk}
   The classical Kummer theory from \cite[Section 3.3]{Cyclochro} shows that we can also interpret prime-to-$p$ torsion in $\Gpic$ in terms of Galois extensions but here always with discrete groups, so that we also find finitariness, but now with the same proof for the whole $\Gpic$ (no loops, and no subfinitariness).
\end{rmk}

We begin with a lemma:
\begin{lm}
    Let $H$ be a finite $p$-power torsion abelian group and $R$ a $T(n)$-local commutative ring spectrum. The connective mapping spectrum $\Map(H,\PPic(R))$ is $n+1$-truncated. 
\end{lm}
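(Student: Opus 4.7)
The plan is to bootstrap from \Cref{prop:Kummer1up} via Galois descent along the chromatic cyclotomic extension. Since $\Map(-,\PPic(R))$ converts direct sums into products and products preserve truncation, we may reduce to $H = \Z/p^r$; and since $\Omega^n \Map(H, \PPic(R)) \simeq \Map(\Sigma^n H, \PPic(R))$ at the level of connective mapping spectra, it suffices to show that the latter is $1$-truncated.

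\Cref{prop:Kummer1up} requires an $\Sph_{T(n)}[\omega^{(n)}_{p^r}]$-algebra structure on the base, so I would first base change by setting $R' := R \otimes_{\Sph_{T(n)}}\Sph_{T(n)}[\omega^{(n)}_{p^r}]$. The map $R \to R'$ is a faithful $(\Z/p^r)^\times$-Galois extension, obtained by base change from the chromatic cyclotomic Galois extension of $\Sph_{T(n)}$, and $R'$ is canonically a commutative $\Sph_{T(n)}[\omega^{(n)}_{p^r}]$-algebra. Since the formation of $\PPic$ commutes with limits of symmetric monoidal categories, Galois descent yields $\PPic(R) \simeq \PPic(R')^{h(\Z/p^r)^\times}$, and mapping spectra commute with limits in the target, so $\Map(\Sigma^n H,\PPic(R)) \simeq \Map(\Sigma^n H,\PPic(R'))^{h(\Z/p^r)^\times}$.

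Now I would apply \Cref{prop:Kummer1up} at $k = 0$ to the $\Sph_{T(n)}[\omega^{(n)}_{p^r}]$-algebra $R'$ to obtain an equivalence $\Map(\Sigma^n H, \PPic(R')) \simeq \Gal_{H^*}(R')$, the space of faithful $H^*$-Galois extensions of $R'$. Since $H^*$ is a finite ordinary abelian group, every such extension is an étale $R'$-algebra, and by the rigidity of étale morphisms of $\mathbb E_\infty$-rings (which forces mapping spaces to be discrete) we conclude that $\Gal_{H^*}(R')$ is a $1$-truncated space. Since homotopy fixed points preserve $1$-truncation, $\Map(\Sigma^n H,\PPic(R))$ is also $1$-truncated, which is exactly the statement that $\Map(H, \PPic(R))$ is $(n+1)$-truncated.

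The main technical step is the $1$-truncatedness of $\Gal_{H^*}(R')$, which rests on the étale rigidity of Galois extensions with a discrete group. The rest is a routine manipulation combining Galois descent for $\PPic$ with \Cref{prop:Kummer1up}.
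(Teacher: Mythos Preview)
Your argument is correct, but it takes a considerably more elaborate route than the paper's own proof. The paper argues in two lines: for $m\geq n+2$, one has
\[
\pi_m\Map(H,\PPic(R))\simeq \pi_0\Map(\Sigma^m H,\PPic(R))\simeq \pi_0\Map_{\CAlg(\Sp_{T(n)})}(\Sph_{T(n)}[B^{m-1}H],R),
\]
and then invokes the fundamental fact from chromatic ambidexterity that $\Sph_{T(n)}[B^{j}H]\simeq \Sph_{T(n)}$ once $j\geq n+1$ (for $H$ a finite abelian $p$-group). This immediately forces the higher homotopy groups to vanish.

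By contrast, you pass through Galois descent along the cyclotomic extension and then invoke \Cref{prop:Kummer1up} at $k=0$ to identify $\Map(\Sigma^n H,\PPic(R'))$ with the space of $H^*$-Galois extensions, finally appealing to separability/étale rigidity to get $1$-truncatedness. Each step is legitimate and non-circular (\Cref{prop:Kummer1up} does not depend on the lemma you are proving), but the machinery you invoke --- the full chromatic Fourier correspondence, Galois descent for $\PPic$, and cotruncatedness of discrete Galois extensions --- is substantially heavier than the single Eilenberg--MacLane vanishing result the paper uses. In effect, you are deducing the vanishing of $\Sph_{T(n)}[B^{j}H]$ for $j>n$ from the Fourier transform, whereas the paper uses it as a direct input. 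The paper's approach is shorter and more elementary; your approach has the minor advantage that it stays within the Galois-theoretic framework already set up in the section, but that is not really needed here.
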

\begin{proof}
    The proof is the same as in \cite[Proposition 8.14]{ChroNS}. Namely, consider $m\geq n+2$ and note that $$\pi_m(\Map(H,\PPic_{T(n)}(R))\simeq \pi_0\Map(\Sigma^m H,\PPic_{T(n)}(R)) \simeq \pi_0 \Map_{\CAlg(\Sp_{T(n)}}(\Sph_{T(n)}[B^{m-1}M], R) $$ and 
$\Sph_{T(n)}[B^{m-1}R]\simeq \Sph_{T(n)}$ by \cite[Theorem 5.3.5]{AmbiChro} as soon as $m-1\geq n+1$, i.e. $m\geq n+2$. 
\end{proof}

\begin{proof}[Proof of \Cref{thm:finitary}]
We fix a finite $p$-power torsion abelian group $H$, say with exponent $p^r$. 

 We start by observing that by descent, we may assume without loss of generality that $R$ admits primitive $p^r$th roots of unity: indeed, since $\Map(\Sigma^{n-1}H,\PPic(R[\omega^{(n)}_{p^r}]))$ is truncated, if it is subfinitary, so are its $(\Z/p^r)^\times$ fixed points, and we have, by Galois descent, using \cite[Proposition 5.2]{Cyclochro}, $$\Map(\Sigma^{n-1}H,\PPic(R))\simeq \Map(\Sigma^{n-1}H,\PPic(R[\omega^{(n)}_{p^r}]))^{h(\Z/p^r)^\times}.$$

 In other words we may work in commutative $\Sph_{T(n)}[\omega^{(n)}_{p^r}]$-algebras instead. 

 Here, we are going to prove the result by using the chromatic Fourier transform. Indeed, by \Cref{prop:Kummer1up}, we have a natural equivalence $$\Map(\Sigma^{n-1}H,\PPic(R))\simeq \Gal_{B H^*}(R)$$ so we need to prove that this space of Galois extensions is subfinitary. By \Cref{prop:bootstrap} it suffices to show that $\one^{B H^*}$ is compact: the other assumptions of that proposition are satisfied by \Cref{ex:T(n)standingass}, \Cref{ex:T(n)cpct}, $T(n)$-local ambidexterity \cite[Theorem A]{AmbiChro}, finiteness of $H$ and \cite[Theorem 7.29]{Fourier}. 

 By $T(n)$-local affineness of $BH^*$, \cite[Proposition 2.30]{Fourier}\footnote{\Cref{warn:notationGal} is relevant here again.} guarantees that maps out of $\one^{BH^*}$ corepresent $H^*$-Galois extensions. Now \Cref{prop:bootstrap}, in the case of a discrete group, guarantees that $\Gal_{H^*}(-)$ is finitary if $\one^{H^*}$ is compact, which we proved in \Cref{lm:idmcpct}. 
\end{proof}
In fact, we can go a bit further, and this will be convenient when we study categories of sheaves. Indeed, the same proof together with the general version of \Cref{prop:bootstrap} and of \Cref{prop:Kummer1up} shows: 
\begin{cor}\label{cor:catfinitary}
    Let $H$ be a finite $p$-power torsion abelian group. The functor $$\mathcal C\mapsto \Map_\Z(\Sigma^{n-1}H,\Gpic(\mathcal C))$$ is subfinitary, as a functor on $(\CAlg(\PrL)_{(\omega)})_{\Sp_{T(n)}/}$. 
\end{cor}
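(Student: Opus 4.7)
The plan is to mirror the proof of \Cref{thm:finitary} essentially verbatim, replacing commutative ring spectra with categories and invoking the categorified forms of the ingredients. As in that proof, I begin by using the identification $\Map_\Z(\Sigma^{n-1}H,\Gpic(\mathcal C))\simeq \Map(\Sigma^{n-1}H,\PPic(\mathcal C))$ (arising from the fact that $\Sigma^{n-1}H$ is itself a $\Z$-module and the universal property of $\Gpic$), and I note that this target is uniformly truncated in $\mathcal C$ by the truncatedness lemma used in \Cref{thm:finitary}.

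First, if $p^r$ denotes the exponent of $H$, I would reduce to the case where $\mathcal C$ is under $\Mod_{\Sph_{T(n)}[\omega^{(n)}_{p^r}]}(\Sp_{T(n)})$. Base change along $\Sp_{T(n)}\to \Mod_{\Sph_{T(n)}[\omega^{(n)}_{p^r}]}(\Sp_{T(n)})$ is a finitary operation on $\CAlg(\PrL)_{(\omega)}$, and Galois descent along this categorified $(\Z/p^r)^\times$-Galois extension identifies $\Map(\Sigma^{n-1}H,\PPic(\mathcal C))$ with $h(\Z/p^r)^\times$-fixed points of its counterpart on the extension. Since these are fixed points of a finite group acting on uniformly truncated spaces, they commute with filtered colimits, and fixed points always preserve monomorphisms; so subfinitariness upstairs implies subfinitariness downstairs.

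Next, invoking the second, more general statement of \Cref{prop:Kummer1up}, I obtain a natural equivalence $\Map(\Sigma^{n-1}H,\PPic(\mathcal C))\simeq \Gal_{BH^*}(\mathcal C)$, and it suffices to show this space of Galois extensions is subfinitary. This is an instance of \Cref{prop:bootstrap} with base $\C:= \Mod_{\Sph_{T(n)}[\omega^{(n)}_{p^r}]}(\Sp_{T(n)})$ (compactly generated and satisfying \Cref{ass:standing}, with the same Burklund Moore-spectrum $V$ as in \Cref{ex:T(n)standingass}) and $G:= BH^*$. The affineness, adjointability, and finite-type hypotheses for $BH^*$ and its iterated loop spaces follow from $T(n)$-local ambidexterity and the finiteness of $H$. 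The one nontrivial input --- compactness of $F(BH^*,\one_\C)=\one_\C^{BH^*}$ in $\CAlg(\C)$ --- is handled exactly as in the proof of \Cref{thm:finitary}: by $T(n)$-local affineness of $BH^*$ and \cite[Proposition 2.30]{Fourier}, this compactness amounts to finitariness of $\Gal_{H^*}(-)$, which follows from the discrete-group, finitary case of \Cref{prop:bootstrap} once we know that $\one_\C^{H^*}=\prod_{H^*}\one_\C$ is a compact commutative algebra, and this is \Cref{lm:idmcpct}.

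I expect no substantial obstacle beyond bookkeeping, since every step has already been executed for commutative ring spectra in \Cref{thm:finitary} and the relevant propositions are formulated with enough generality. The only mild subtlety is making sure the Fourier-theoretic equivalence of \Cref{prop:Kummer1up} is natural in $\mathcal C$ ranging over $(\CAlg(\PrL)_{(\omega)})_{\Mod_{\Sph_{T(n)}[\omega^{(n)}_{p^r}]}(\Sp_{T(n)})/}$ and not merely over module categories, but this is precisely what the ``more general'' second form of that proposition provides.
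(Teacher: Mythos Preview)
Your proposal is correct and follows essentially the same approach as the paper: the paper's own proof simply says ``exactly the same proof'' as \Cref{thm:finitary}, invoking truncatedness to pass to the cyclotomic extension, then \Cref{prop:Kummer1up} and \Cref{prop:bootstrap}. You have spelled out the details faithfully, including the bootstrap from compactness of $\one^{H^*}$ (via \Cref{lm:idmcpct}) to finitariness of $\Gal_{H^*}$ to compactness of $\one^{BH^*}$ to subfinitariness of $\Gal_{BH^*}$.
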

\begin{proof}
    Exactly the same proof - we first show that this space is truncated, and this allows us to reduce to the case of commutative $\Mod_{\Sph_{T(n)}[\omega^{(n)}_{p^r}]}(\Sp_{T(n)})$-algebras, where we use \Cref{prop:Kummer1up} to reduce to $\mathcal C\mapsto \Gal_{H^*}(\mathcal C)$ being finitary, which is covered by \Cref{prop:bootstrap}. 
\end{proof}
\section{$K(1)$-local $K$-theory of Azumaya algebras}\label{section:main}
\subsection{The Künneth formula}
In this section, we discuss the Künneth formula from \Cref{thm:Kintro}. 

The basic instance of Künneth formula which motivates our whole approach is the following: 
\begin{lm}\label{lm:GaloisKünneth}
    Let $X$ be a qcqs scheme with $p$ invertible and $Y\to X$ a $G$-Galois cover where $G$ is a $p$-group. For any $X$-linear category $C$, The canonical map $K(Y)\otimes_{K(X)}K(C)\to K(\Perf(Y)\otimes_{\Perf(X)}C)$ is a $K(1)$-local equivalence.
\end{lm}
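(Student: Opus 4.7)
The plan is to realize the Künneth map as an instance of Galois descent for $K(1)$-local $\mathbb{E}_\infty$-rings. Set $R := L_{K(1)}K(X)$, $S := L_{K(1)}K(Y)$ and, for an $X$-linear category $C$, $C_Y := \Perf(Y)\otimes_{\Perf(X)}C$.

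First, I would apply étale hyperdescent for $L_{K(1)}K$ (Clausen--Mathew, valid since $p\in\Gm(X)$) to the cover $Y\to X$ tensored with $C$. Because $Y\to X$ is $G$-Galois, each level of the \v Cech nerve satisfies $Y\times_X\cdots\times_X Y\simeq Y\times G^{n}$, so the resulting cosimplicial diagram identifies with the cobar construction for the $G$-action on $L_{K(1)}K(C_Y)$. Taking limits yields
\[L_{K(1)}K(C)\simeq L_{K(1)}K(C_Y)^{hG},\qquad R\simeq S^{hG}.\]

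Second, I would upgrade this to the statement that $R\to S$ is a faithful Rognes $G$-Galois extension of $K(1)$-local $\mathbb{E}_\infty$-rings. Descent is already handled, and faithfulness amounts to the descendability of $R\to S$, which is a formal consequence of étale hyperdescent. The remaining Galois condition $S\otimes_R S\simeq S^G$ is an instance of the Clausen--Mathew / Mathew--Naumann--Noel affineness package for $L_{K(1)}K$, whose essential input is $K(1)$-local $1$-semiadditivity (Hopkins--Lurie, Kuhn), applicable precisely because $G$ is a finite $p$-group.

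Third, by Rognes--Mathew, the base change $-\otimes_R S$ is a symmetric monoidal equivalence $\Mod_R(\Sp_{K(1)})\simeq \Mod_S(\Sp_{K(1)})^{hG}$ with inverse $(-)^{hG}$. The object $L_{K(1)}K(C_Y)$ naturally lies on the right-hand side, and by the first step its $hG$ equals $L_{K(1)}K(C)$; applying the inverse equivalence therefore produces $L_{K(1)}K(C_Y)\simeq S\otimes_R L_{K(1)}K(C)$. A naturality check against the lax monoidal structure on $L_{K(1)}K$ identifies this with the canonical Künneth map, which is exactly the claim.

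The delicate point I expect is the Galois condition in step 2: superficially, it looks like the $C=\Perf(Y)$ case of the very statement to be proved. What breaks the circularity is that the underlying affineness results of Clausen--Mathew rest on $\Sp_{K(1)}$'s intrinsic $1$-semiadditivity and descendability of étale covers, rather than on any $K$-theoretic Künneth formula, so invoking them here is legitimate.
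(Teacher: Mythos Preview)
Your approach is correct and rests on the same two ingredients as the paper's proof: $K(1)$-local Tate vanishing for $p$-groups (so that $(-)^{hG}$ commutes with colimits, hence tensors) and $K(1)$-local affineness of $BG$ (so that $(-)^{hG}$ is conservative). The paper, however, applies these facts much more directly: it observes that the Künneth map becomes an equivalence after applying $(-)^{hG}$ --- the left side becomes $K(Y)^{hG}\otimes_{K(X)}K(C)\simeq K(C)$ by Tate vanishing and \cite[Theorem B]{CMNN}, the right side becomes $K(C)$ by descent --- and then invokes conservativity of $(-)^{hG}$ from affineness to conclude. No Rognes--Galois structure, no categorical descent equivalence is needed.

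Your route is more elaborate: you first establish that $R\to S$ is faithful $G$-Galois and then invoke the full module-category equivalence $\Mod_R\simeq\Mod_S^{hG}$. The ``delicate point'' you flag is indeed the $C=\Perf(Y)$ instance of the lemma; your resolution is correct in spirit, but note that the cleanest way to verify $S\otimes_R S\simeq S^G$ here is precisely the paper's direct argument (equivalence after $(-)^{hG}$ plus conservativity) specialized to that case. So your approach effectively proves the special case by the paper's method, then bootstraps via heavier machinery, whereas the paper handles all $C$ at once in the same stroke. Both work; the paper's is shorter.
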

\begin{proof}
    We note that by \cite[Theorem B]{CMNN} and the fact that tensor products commute with fixed points in $\Sp_{K(1)}$ (which follows from $K(1)$-local Tate vanishing, cf. \cite{GS}, the canonical map becomes an equivalence upon taking $G$-fixed points, i.e. limit over $BG$. By \cite[Theorem 7.29]{Fourier}, $BG$ is $K(1)$-locally affine and so taking fixed points is conservative, thus the claim follows. 
\end{proof}

By combining it with the Künneth formula from \cite{BCM}, we obtain \Cref{thm:Kintro}:
\begin{cor}\label{cor:AzKünneth}
     Let $X$ be a qcqs scheme with $p$ invertible and $A$ an Azumaya algebra over $X$. Assume the pullback of $A$ to $X\times_{\Spec(\Z[\frac{1}{p}])}\Spec(\Z[\frac{1}{p},\zeta_{p^\infty}])$ of the latter scheme is split by a $G$-Galois cover for some $p$-group $G$. In this case, for every $X$-linear category $C$, the map $K(A)\otimes_{K(X)}K(C)\to K(\Perf(A)\otimes_{\Perf(X)}C)$ is a $K(1)$-local equivalence. 
\end{cor}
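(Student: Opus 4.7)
The plan is to combine two descent arguments: first, descent along the cyclotomic cover $X' := X \times_{\Spec(\Z[\frac{1}{p}])} \Spec(\Z[\frac{1}{p}, \zeta_{p^\infty}]) \to X$ using the Künneth formula of Bhatt--Clausen--Mathew \cite{BCM}; and second, descent along the $G$-Galois cover $Y \to X'$ that splits $A_{X'}$, using \Cref{lm:GaloisKünneth}. After the second descent, $A$ becomes Morita trivial and the Künneth map is tautological.

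More concretely, I would begin by reducing to the case where $X$ already contains $p^\infty$-th roots of unity. The pro-cover $X' \to X$ has profinite Galois group $\Z_p^\times$, which is not a pro-$p$-group (there is a tame piece of order dividing $p-1$), so \Cref{lm:GaloisKünneth} does not apply directly. The Künneth formula of \cite{BCM} is however tailored precisely to this cyclotomic/chromatic context: it guarantees that passing to $K(1)$-local $K$-theory along the base change $X \mapsto X'$ is compatible with relative tensor products of $X$-linear categories in the appropriate sense, so that it suffices to verify the Künneth formula after base change to $X'$.

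Once over $X'$, the hypothesis provides a $G$-Galois cover $Y \to X'$ with $G$ a $p$-group, splitting $A_{X'}$. By \Cref{lm:GaloisKünneth}, for any $X'$-linear category $D$ the map $K(Y) \otimes_{K(X')} K(D) \to K(\Perf(Y) \otimes_{\Perf(X')} D)$ is a $K(1)$-local equivalence; applying this with $D = C_{X'}$ and with $D = \Perf(A_{X'}) \otimes_{\Perf(X')} C_{X'}$ reduces the statement to checking the Künneth map after a further base change to $Y$. Now the splitting hypothesis gives a Morita equivalence $\Perf(A_{X'}) \otimes_{\Perf(X')} \Perf(Y) \simeq \Perf(Y)$ of $Y$-linear symmetric monoidal categories, so the base-changed Künneth map becomes the identity on $K(Y) \otimes_{K(Y)} K(C_Y) \simeq K(C_Y)$, which is trivially an equivalence.

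The principal obstacle is the clean invocation of the BCM Künneth formula in the first step: one must verify that it applies to the pair $(K(X'), K(C))$ (and its variant with $\Perf(A)\otimes_{\Perf(X)} C$ in place of $C$), and that the base change commutes naturally with the formation of $\Perf(A) \otimes_{\Perf(X)} C$. Modulo this, everything else is a formal assembly of \Cref{lm:GaloisKünneth} with Morita invariance of $K$-theory.
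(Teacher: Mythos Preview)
Your proposal is correct and follows the same two-step structure as the paper's proof: first invoke \cite{BCM} to reduce to $X' = X[\zeta_{p^\infty}]$ (the paper does this by tensoring the map with $\KU$, which is $K(1)$-locally conservative, and using $K(C)\otimes\KU \simeq K(C_{X'})$ from \cite[Theorem 3.11]{BCM}), then use \Cref{lm:GaloisKünneth} for the $p$-group cover $Y \to X'$. The only organizational difference is in the second step: the paper writes $K(A_{X'}) \simeq K(Y)^{hG}$ for a twisted $G$-action (via \cite[Theorem B]{CMNN}) and identifies the K\"unneth map as the $G$-fixed points of the equivalence from \Cref{lm:GaloisKünneth}, whereas you base-change along $K(X') \to K(Y)$ and check there---your reduction tacitly needs $K(Y)$ to be $K(1)$-locally conservative over $K(X')$, but this follows from the same affineness of $BG$ and commutation of $(-)^{hG}$ with tensor products already used in the proof of \Cref{lm:GaloisKünneth}.
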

In this situation, we say $A$ \emph{satisfies (the) $K(1)$-local Künneth (formula)}. Note that this only depends on $\Perf(A)$, i.e. on $A$ up to Morita equivalence, thus we may say that a Brauer class satisfies $K(1)$-local Künneth. 
\begin{proof}
Since $\KU$ is $K(1)$-locally conservative, we can tensor the map with $\KU$, or equivalently, take a relative tensor product over $K(X)$ with $K(X)\otimes \KU\simeq K(X[\zeta_{p^\infty}])$ where the latter equivalence comes from \cite[Theorem 1.4]{BCM}. Note, though, that this is also true for $K(C)$ for arbitrary $C$ (including $\Perf(A)$), in the sense that $K(C)\otimes \KU \simeq K(C\otimes_{\Perf(X)}\Perf(X[\zeta_{p^\infty}]))$, by \cite[Theorem 3.11]{BCM} applied to the invariant $E:= K(C\otimes_{\Perf(X)}-)$. 

Thus, we are reduced to the case where $X$ has primitive $p$ power roots of unity, so that by assumption, $A$ is split by a $p$-power Galois cover $Y\to X$. Then, $K(A)\simeq K(Y)^{hG}$ for some twisted $G$-action on $K(Y)$, by \cite[Theorem B]{CMNN}, and similarly $K(A\otimes_{\Perf(X)}C)\simeq K(\Perf(Y)\otimes_{\Perf(X)}C)^{hG}$. Thus, our map is obtained as the $G$-fixed points of an map which is an equivalence by \Cref{lm:GaloisKünneth}. 
\end{proof}
\begin{rmk}
     We shall not need this, but combining \Cref{lm:GaloisKünneth,cor:AzKünneth} with Efimov's theorem about the rigidity of motives, one can deduce that the $K(1)$-local $X$-linear motives of $\Perf(Y)$ and $\Perf(A)$ respectively are \emph{cellular}, i.e. in the localizing subcategory of $L_{K(1)}\Mot_X$ generated by the unit. This is part of a general phenomenon that $K(1)$-local motives tend to be more cellular than general motives. 
 \end{rmk}
In fact, in $K(1)$-local motives over ordinary schemes, the things that tend to prevent cellularity seem to lie in the geometry and arithmetic (the non-pointness) of those schemes. This (and sufficient optimism) motivates the following conjecture: 
\begin{conj}\label{conj:crazy}
    The category of $K(1)$-local motives relative to $\overline{\mathbb Q}$ is cellular. 
\end{conj}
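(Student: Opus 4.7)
This is a highly speculative conjecture, and I will only sketch what a proof strategy might look like, rather than claim to prove it. The guiding principle from the preceding discussion is that over an ordinary scheme, obstructions to cellularity of $K(1)$-local motives come from either the geometry (non-pointness) or the arithmetic (the residue fields not being algebraically closed), both of which vanish for $\Spec(\overline{\mathbb Q})$. My plan would be to first reduce to showing that for every smooth projective $\overline{\mathbb Q}$-variety $X$, the $K(1)$-local motive $L_{K(1)}\Perf(X)$ lies in the localizing subcategory of $L_{K(1)}\Mot_{\overline{\mathbb Q}}$ generated by the unit, and then to proceed by induction on dimension.

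For the first reduction, compact objects of $\Mot_{\overline{\mathbb Q}}$ are built from motives of smooth $\overline{\mathbb Q}$-schemes, and a Nisnevich/cdh descent argument together with resolution of singularities (available in characteristic zero) brings one to the smooth projective case. The base case of dimension zero is that $L_{K(1)}K(\overline{\mathbb Q})\simeq \KU_p$ by Suslin rigidity, which is tautologically cellular. For curves, the Picard/Jacobian decomposition reduces the question to whether the $K(1)$-local $K$-theory of an abelian variety over $\overline{\mathbb Q}$ is cellular; this is nontrivial but plausible, since $\mu_{p^\infty}\subset\overline{\mathbb Q}$ and one can hope to compare, via rigidity and continuity of $L_{K(1)}K$, with abelian varieties over $\overline{\mathbb F_\ell}$ for $\ell\neq p$. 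In higher dimension, one would use a Bittner-style presentation via weak factorization, reducing to smooth projective bundles and blow-ups, for which cellularity is preserved by the projective bundle formula and the blow-up formula in $K$-theory.

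The main obstacle is the abelian variety step, and more generally the appearance of nontrivial algebraic cycles in the Chow motive of a smooth projective variety: showing that all graded pieces of the motivic filtration become cellular after $K(1)$-localization is essentially a $K(1)$-local shadow of the (largely conjectural) Beilinson-style picture of mixed motives over $\overline{\mathbb Q}$. Short of such deep input, partial results are in reach: the Künneth formula of \Cref{cor:AzKünneth} combined with Merkurjev--Suslin shows that $L_{K(1)}$-motives of Azumaya algebras over $\overline{\mathbb Q}$ are cellular, and $K(1)$-local Galois descent handles finite étale covers, so the cellular subcategory already contains a great deal. A full proof of \Cref{conj:crazy} is, however, likely beyond current techniques, which is presumably why the author states it only as a conjecture.
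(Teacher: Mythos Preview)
The statement is labeled a \emph{conjecture} in the paper, and the paper offers no proof or proof sketch; it is presented as open speculation, immediately followed by the hedge that restricting to dualizable motives may be more reasonable. So there is no proof in the paper to compare against, and you correctly recognize this at the end of your proposal.

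Your speculative strategy is reasonable in spirit but has a gap already in the first reduction: $\Mot_{\overline{\mathbb Q}}$ is the category of \emph{noncommutative} motives, so its compact (equivalently dualizable, by Efimov rigidity) objects are motives of arbitrary smooth proper $\overline{\mathbb Q}$-linear stable categories, not merely of smooth projective varieties. Your reduction via resolution of singularities and cdh descent would at best establish cellularity of the geometric subcategory $\Mot^{\mathrm{Var}}_{\overline{\mathbb Q}}$, not of all of $L_{K(1)}\Mot_{\overline{\mathbb Q}}$. Bridging that gap would require showing that every smooth proper $\overline{\mathbb Q}$-linear category has $K(1)$-locally cellular motive, which is closely related to the unresolved $K(1)$-local K\"unneth questions (and to Blanc's lattice conjecture) that the paper flags elsewhere as open. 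Beyond this, your identification of the abelian-variety step as the core arithmetic obstacle in the geometric case is apt, and your honest conclusion that a full proof is out of reach matches the paper's stance.
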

It may be more reasonable to conjecture this only for dualizable $K(1)$-local motives. 

In our situation though, we cannot hope for the whole category to be cellular. However, since not all $p$-power torsion Azumaya algebras are split by $p$-power Galois covers, even in the affine case, the following does not follow from \Cref{cor:AzKünneth}, though our optimism prevails again:
\begin{conj}\label{conj:AzK}
    Let $X$ be a qcqs scheme in which $p$ is invertible, and let $A,A'$ be $p$-power torsion Azumaya algebras. The Künneth map $K(A)\otimes_{K(X)}K(A')\to K(A\otimes_X A')$ is a $K(1)$-local equivalence. 
\end{conj}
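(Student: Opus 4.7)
The plan is to deduce the conjecture from \Cref{cor:AzKünneth} via an étale descent argument, exploiting that Azumaya algebras are étale-locally trivial. Both sides of the Künneth map extend to presheaves on $X_\et$, sending an étale $U\to X$ to $L_{K(1)}(K(A|_U)\otimes_{K(U)} K(A'|_U))$ and $L_{K(1)}K(A|_U \otimes_U A'|_U)$ respectively. The right-hand side is an étale sheaf by the $K(1)$-local étale hyperdescent of \cite{CMNN}, so the problem reduces to checking that the left-hand side agrees with the right-hand side after étale sheafification, and then that this sheafification correctly recovers the left-hand side on global sections.

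To check the equivalence of the sheafifications, I would reduce to an equivalence on étale stalks. At any geometric point $\bar x \to X$, the strict henselization $X^{sh}_{\bar x}$ has separably closed residue field of characteristic $\neq p$, so by Gabber's theorem on Brauer groups of henselian local rings, $\Br(X^{sh}_{\bar x})$ vanishes. In particular both $A|_{X^{sh}_{\bar x}}$ and $A'|_{X^{sh}_{\bar x}}$ are Morita trivial, and both sides of the Künneth map become tautologically equivalent to $L_{K(1)}K(X^{sh}_{\bar x})$. So agreement at stalks is automatic, and the sheafified Künneth map is an equivalence.

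The main obstacle is the second half of the reduction: one must show that the left-hand side presheaf agrees with its étale sheafification on global sections. The left-hand side involves a relative tensor product over $L_{K(1)}K(X)$, which does not a priori commute with the limits involved in sheafification. A clean way to force this would be to show that $L_{K(1)}K(A)$ is dualizable as an $L_{K(1)}K(X)$-module, with dual $L_{K(1)}K(A\op)$; then tensoring with $L_{K(1)}K(A)$ preserves limits, and étale descent transfers cleanly from the right-hand side to the left. This dualizability is an immediate consequence of \Cref{cor:AzKünneth} in the case covered there, but in general it seems to require genuinely new input --- plausibly a Künneth formula for $K(1)$-local $K$-theory of smooth proper categories over separably closed fields, i.e.\ some form of Blanc's lattice conjecture, as noted in the introduction. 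Alternatively, one could try to work directly in the sheafy category of \Cref{thm:Sh}: there, Azumaya algebras correspond to invertible objects and hence are dualizable for free, so a sheafy Künneth formula holds automatically; the remaining step of commuting $\Gamma_\et$ with the tensor product over $\mathcal K$ of such invertible modules is, however, essentially the same dualizability question in disguise, and is where the real difficulty lies.
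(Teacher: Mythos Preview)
This statement is labeled a \emph{conjecture} in the paper, and the paper does not prove it. Immediately after stating it, the paper writes ``In general, we seem to be stuck,'' and only establishes the special cases of fields (via Merkurjev--Suslin), semilocal rings (via Hoobler), and schemes whose pro-$p$-truncated \'etale homotopy type has vanishing $\pi_2$. So there is no ``paper's own proof'' to compare against.

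Your proposal is not a proof either, and you are honest about this: you correctly isolate the obstruction. The \'etale-descent reduction you sketch is exactly the kind of argument that works for the sheafy version (\Cref{lm:cellularKunneth}, \Cref{thm:Sh}), and your stalk computation is fine. But as you note, the left-hand side presheaf $U\mapsto L_{K(1)}(K(A|_U)\otimes_{K(U)}K(A'|_U))$ is not known to satisfy \'etale descent, and forcing it to do so requires exactly the dualizability of $L_{K(1)}K(A)$ over $L_{K(1)}K(X)$. By \Cref{lm:globalKfromlocalK}, that dualizability is equivalent to the K\"unneth formula for $A$ and $A\op$ --- which is a special case of the very conjecture you are trying to prove. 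So the argument is circular: the ``genuinely new input'' you flag is not an auxiliary technical lemma but the conjecture itself in slight disguise. Your closing observation that the sheafy route of \Cref{thm:Sh} runs into the same wall is correct and is precisely why the paper keeps the sheafy and non-sheafy statements separate.
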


We mention here the following elementary observation, to justify that this conjecture is enough to get ``everything'' going: 
\begin{lm}\label{lm:globalKfromlocalK}
    Let $F:C\to D$ be a unital, lax symmetric monoidal functor between symmetric monoidal categories, e.g. $L_{K(1)}K(-): \Catperf_X\to \Mod_{K(X)}(\Sp_{K(1)})$. 

    Let $x\in C$ be a dualizable object (e.g. an invertible object). Suppose that the canonical map $F(x)\otimes F(x^\vee)\to F(x\otimes x^\vee)$ is an equivalence. In this case, ``Künneth holds at $x$'', that is, for every $y$, the canonical map $$F(x)\otimes F(y)\to F(x\otimes y)$$ is an equivalence.  
\end{lm}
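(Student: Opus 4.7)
The strategy is to construct an explicit two-sided inverse $\beta_y : F(x\otimes y)\to F(x)\otimes F(y)$ to the Künneth map $\alpha_y$, using the duality data for $x$ transported through $F$. Morally, the hypothesis that $\alpha_{x^\vee}$ is an equivalence is exactly what is needed to make $F(x)$ dualizable with dual $F(x^\vee)$ in a suitable relative sense, and any dualizable object satisfies Künneth.

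More concretely, I first construct a coevaluation $\mathrm{coev} : \one_D \to F(x)\otimes F(x^\vee)$ as the composite
\[\one_D \xrightarrow{\eta} F(\one_C) \xrightarrow{F(\mathrm{coev}_x)} F(x\otimes x^\vee)\xleftarrow{\sim} F(x)\otimes F(x^\vee),\]
where $\eta$ is the unit of the lax symmetric monoidal structure and the last equivalence comes from our hypothesis. Using this together with the lax structure map $\mu$ and $F$ applied to the evaluation $\mathrm{ev}_x\otimes \mathrm{id}_y : x^\vee\otimes x\otimes y\to y$, I define $\beta_y$ as the composite
\[F(x\otimes y)\simeq \one_D\otimes F(x\otimes y)\xrightarrow{\mathrm{coev}\otimes\mathrm{id}} F(x)\otimes F(x^\vee)\otimes F(x\otimes y) \xrightarrow{\mathrm{id}\otimes\mu} F(x)\otimes F(x^\vee\otimes x\otimes y)\xrightarrow{F(\mathrm{id}\otimes \mathrm{ev}_x\otimes\mathrm{id})} F(x)\otimes F(y).\]

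The final step is to verify $\alpha_y\circ \beta_y = \mathrm{id}$ and $\beta_y\circ\alpha_y=\mathrm{id}$. Using naturality and associativity of the lax structure $\mu$ (which lets $F$-applied morphisms be pulled past lax structure maps) together with the unitality axiom for $\eta$, both composites can be rewritten as $F$ applied to one of the two composites $x\otimes y\to x\otimes x^\vee\otimes x\otimes y\to x\otimes y$ (using $\mathrm{coev}_x$ and $\mathrm{ev}_x$ on either the left or right copy of $x$). These equal $\mathrm{id}_{x\otimes y}$ by the two triangle identities witnessing that $x$ is dualizable with dual $x^\vee$.

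The main obstacle is really the diagram-chase bookkeeping needed to move $F$ outside of tensor products in the correct order; there is no genuine new input beyond the dualizability of $x$. A slicker reformulation, should the bookkeeping become unwieldy, is to factor $F$ through the unital lax symmetric monoidal functor $\tilde F : C\to \Mod_{F(\one_C)}(D)$; the hypothesis then makes $\tilde F(x)$ honestly dualizable in $\Mod_{F(\one_C)}(D)$ with dual $\tilde F(x^\vee)$, and the Künneth map for a dualizable object in any (unital) lax symmetric monoidal functor is invertible by this same triangle-identity argument, now phrased entirely in terms of honest coevaluation/evaluation.
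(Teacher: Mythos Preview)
Your proof is correct and follows essentially the same approach as the paper: construct the inverse map via the transported coevaluation, the lax structure map, and $F$ applied to evaluation, then reduce the two composites to the triangle identities for $x$. The paper's proof writes down the same backwards composite and then simply says ``it is a standard diagram chase to verify that this is an inverse,'' so your version is in fact more detailed than the original; your closing remark about factoring through $\Mod_{F(\one_C)}(D)$ to make $F(x)$ honestly dualizable is a nice way to organize the $\beta_y\circ\alpha_y$ direction, which otherwise needs slightly more care than ``$F$ applied to a triangle identity'' suggests.
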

\begin{proof}
The assumption certainly guarantees that $F(x)$ is dualizable with dual $F(x^\vee)$ and duality data coming from that of $x$ combined with the inverse of the canonical map. This allows us to define a map backwards: $$F(x\otimes y)\to F(x)\otimes F(x^\vee)\otimes F(x\otimes y)\to F(x)\otimes F(x^\vee\otimes x\otimes y)\to F(x)\otimes F(y)$$
    where the first map is the unit of the duality data, the second uses the lax symmetric monoidal structure, and the last one uses the coevaluation. 

    From there, it is a standard diagram chase to verify that this is an inverse to the canonical map, and we leave it to the reader.
\end{proof}
Thus in particular, if one can prove the Künneth formula for $A$ and $A\op$, we get it for $A$ and arbitrary $X$-linear categories $C$, as in \Cref{cor:AzKünneth}. 

Note that we stated the conjecture for all schemes, but it is perhaps more reasonable to restrict it to a nicer class of schemes, such as the ones appearing in Thomason's hyperdescent theorem, see \cite[Theorem 7.14]{CM} for somewhat optimal assumptions; or maybe to affines.

\begin{rmk}
    By a result of Gabber and de Jong \cite{deJGabber}, torsion Azumaya algebras are all represented by classical Azumaya algebras whenever $X$ has an ample line bundle. Now if $A$ is a classical Azumaya algebra of rank $r$ prime to $p$, Tabuada and Van den Bergh prove in \cite[Theorem 2.1]{TvdB1}\footnote{See also \cite[Theorem B.12]{TvdB2}.} that the motive of $A$, when $r$ is inverted (e.g. in $p$-complete motives) is trivial. Thus prime-to-$p$ torsion Azumaya algebras tend to be cellular for very different reasons than the ones outlined earlier for $p$-power torsion ones. 
\end{rmk}

So in general, we seem to be stuck. There are certain situations, though, where one can make good on the second conjecture, by verifying that the assumptions of \Cref{cor:AzKünneth} are satisfied. We give two examples below. First, the case of fields is provided by the Merkurjev--Suslin theorem \cite{MS}: 
\begin{thm}
    Let $F$ be a field of characteristic $\neq p$ having $p$-power roots of unity. Any $p$-power torsion Azumaya algebra over $F$ is (Morita equivalent to) a tensor product of cyclic Azumaya algebras, and hence is split by a $p$-power Galois extension.
\end{thm}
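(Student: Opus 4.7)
The plan is to reduce the statement to the classical Merkurjev--Suslin theorem and then extract the splitting statement by elementary Kummer theory. Since the first ``hence'' is really a theorem and the second is formal, I would organize the proof accordingly.

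First, I would pass from Azumaya algebras to classes in $\Br(F)$: since Morita equivalent algebras have the same Brauer class, it suffices to produce, for each class $\alpha \in \Br(F)[p^\infty]$, a representative of the desired form. Fix $r$ with $p^r\alpha = 0$. Because $F$ contains $\mu_{p^r}$, the Kummer sequence gives a canonical isomorphism $\Br(F)[p^r] \cong H^2_\et(F,\mu_{p^r}) \cong H^2_\et(F,\mu_{p^r}^{\otimes 2}) \otimes \mu_{p^r}^{\otimes -1}$ (the last twist requires choosing a primitive $p^r$-th root of unity, which we have). Under this identification the cyclic algebra $(a,b)_{p^r}$ corresponds to the cup product symbol $\{a,b\} \in H^2_\et(F,\mu_{p^r}^{\otimes 2})$ (up to the chosen twist). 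This is where Merkurjev--Suslin enters: the norm residue map $K_2^M(F)/p^r \to H^2_\et(F,\mu_{p^r}^{\otimes 2})$ is surjective (in fact an isomorphism, but surjectivity is all we need). Thus every class in $\Br(F)[p^r]$ is a sum of symbols, i.e.\ a product, in the Brauer group, of cyclic algebras of degree a power of $p$. Choosing an Azumaya algebra representing this class gives the first conclusion.

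For the second conclusion, I would use the fact that a cyclic algebra $(a,b)_{p^s}$ over a field containing $\mu_{p^s}$ is split by the Kummer extension $F(a^{1/p^s})/F$, which is Galois with cyclic Galois group of order dividing $p^s$. If $\alpha = \sum_i \{a_i,b_i\}$ in the cohomological description above, then the compositum $L := F(a_1^{1/p^{s_1}},\ldots,a_m^{1/p^{s_m}})$ is a Galois extension whose group embeds into $\prod_i \Z/p^{s_i}$, hence is a $p$-group; and by construction each cyclic summand splits over $L$, so $A \otimes_F L$ is Brauer trivial, i.e.\ Morita equivalent to $L$.

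The only non-trivial ingredient is Merkurjev--Suslin; everything else is bookkeeping with symbols and Kummer extensions. The main ``obstacle'' is really just citing the correct form of \cite{MS}: one needs surjectivity of the norm residue symbol at the prime $p$ on $H^2$, which is exactly the content of \cite[Corollary, pp.\ 158]{MS} already invoked earlier in the paper.
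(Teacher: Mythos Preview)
Your proposal is correct and matches the paper's approach: the paper does not give its own proof of this statement but simply presents it as (a consequence of) the Merkurjev--Suslin theorem, citing \cite{MS}. Your write-up is exactly the standard deduction---surjectivity of the norm residue map onto $H^2_\et(F,\mu_{p^r}^{\otimes 2})$ gives the decomposition into cyclic algebras, and Kummer theory gives the $p$-power Galois splitting field---and is more detailed than what the paper provides.
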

We also note that this was extended to semi-local rings by Hoobler \cite{hoobler}.
Combining this with \Cref{cor:AzKünneth}, we find: 
\begin{cor}\label{cor:semilocal}
    Let $F$ be a field of characteristic $\neq p$ or more generally a semilocal ring with $p$ invertible. In this case, any Azumaya algebra over $F$ which is $p$-power torsion in the Brauer group of $F$ satisfies the Künneth formula. 
\end{cor}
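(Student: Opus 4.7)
The plan is to directly verify the hypothesis of \Cref{cor:AzKünneth}. Let $A$ be a $p$-power torsion Azumaya algebra over $F$. What we need to check is that after base change to $F':= F\otimes_{\Z[\frac{1}{p}]}\Z[\frac{1}{p},\zeta_{p^\infty}]$, the algebra $A$ is split by a $G$-Galois cover for some $p$-group $G$.

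First I would observe that $F'$ is a filtered colimit of the finite étale extensions $F_n := F\otimes_{\Z[\frac{1}{p}]}\Z[\frac{1}{p},\zeta_{p^n}]$, each of which is again semilocal (a finite étale extension of a semilocal ring being semilocal), or a finite product of finite separable extensions of $F$ in the field case. The Azumaya algebra $A_{F'}$ is defined at the finite level $A_{F_n}$ for $n$ large enough, so it suffices to split $A_{F_n}$ by a $p$-group Galois cover over $F_n$, for $n$ at least as large as the exponent of the Brauer class of $A$.

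Second, once $n$ is large enough so that $F_n$ contains primitive $p^n$-th roots of unity (and bounds the exponent of the class), I would invoke the Merkurjev--Suslin theorem for fields, or Hoobler's extension \cite{hoobler} for semilocal rings: $A_{F_n}$ is Morita equivalent to a finite tensor product of cyclic algebras of $p$-power degree. Each cyclic factor is split by a cyclic Galois extension whose degree is a $p$-power, and so their compositum is a Galois extension of $F_n$ splitting $A_{F_n}$, with Galois group a subgroup of a product of cyclic $p$-groups, hence itself a $p$-group.

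This verifies the hypothesis of \Cref{cor:AzKünneth}, which immediately yields the conclusion. There is essentially no obstacle here: the whole content is packaged into the Merkurjev--Suslin/Hoobler input and the previously proved Künneth formula for Azumaya algebras split by $p$-group Galois covers. The only mildly delicate point is the reduction from the infinite base change $F'$ to a finite stage $F_n$, but this is standard since splitting data for an Azumaya algebra descends to a finite level in any filtered colimit of rings.
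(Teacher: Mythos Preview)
Your proposal is correct and follows exactly the approach the paper takes: invoke Merkurjev--Suslin (resp.\ Hoobler in the semilocal case) over a base with enough roots of unity to write the class as a product of cyclic algebras, hence split it by a $p$-group Galois cover, and then feed this into \Cref{cor:AzKünneth}. The paper leaves the reduction from $F'$ to a finite cyclotomic level $F_n$ and the compositum argument implicit, so your write-up is simply a more detailed version of the same proof.
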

A second rich source of examples is the following - we thank Dustin Clausen and Akhil Mathew for pointing out that this should be true, and specifically Akhil Mathew for providing (essentially all) details and allowing me to include a proof (as usual, all mistakes and inaccuracies are due to the author): 
\begin{cor}\label{cor:perfectoid}
    Let $R$ be a $p$-henselian ring, e.g. a $p$-complete ring. In this case, any Azumaya algebra over $R[\frac{1}{p}]$ which is $p$-power torsion in the Brauer group satisfies the Künneth formula. 
\end{cor}
We first need a couple of intermediary results. First, we record the following (very special case of a) theorem of \v{C}esnavi\v{c}ius:
\begin{thm}[{\cite[Theorem 4.10]{purityBr}}]\label{purity}
Let $A$ be a $p$-torsion free integral perfectoid ring. The cohomology group $H^2_\et(\Spec(A[\frac{1}{p}]), \Z/p^\infty)$ is $0$.  
 \end{thm}
The idea is then to reduce to the perfectoid situation by adding $p$-power roots of unity and extracting $p$th roots, thus forming a pro-$p$-Galois extension. For this, we give the following criterion for the $p$-completion of a ring to be integral perfectoid, which we learned from Akhil Mathew:
\begin{prop}\label{prop:criterionperfd}
    Let $A$ be a $p$-torsion free ring. For $A^\wedge_p$ to be integral perfectoid, it suffices that 
    
\begin{enumerate}
    \item $A/p$ is semi-perfect; 
    \item There exists a unit $u$ such that $pu\in A$ admits a $p$-th root; 
    \item $A$ is $p$-integrally closed (if $x\in A[\frac{1}{p}]$ is such that $x^p\in A$, then $x\in A$). 
\end{enumerate}

\end{prop}
\begin{proof}
    We apply \cite[Lemmas 3.9 and 3.10]{BMS} with $\pi= (pu)^{\frac{1}{p}}$. 

We first verify that $\pi$ is a nonzero divisor in $A^\wedge_p$. Suppose $x_n \in A$ is such that $\pi x_n = p^n y_n$ for some $y_n \in A$. Then $ pu x_n^p = p^{np}y_n^p$ and so $x_n^p = u^{-1} p^{np-1}y_n^p$. With $n+1$ instead, we find that if $\pi x_{n+1}= p^{n+1}y_{n+1}$ for $y_{n+1}\in A$, then $(p^{-n}x_{n+1})^p\in A[\frac{1}{p}]$ is in $A$, and hence, by $p$-integral closure $x_{n+1}$ is divisible by $p^n$ in $A$. Thus if $x=(x_n)\in \lim_n A/p^n = A^\wedge_p$ has $\pi x = 0$, then $x=0$, as was to be shown.  

Furthermore, $A/\pi\to A/\pi^p$ is surjective by the first assumption, and if $x^p = \pi^p z$ in $A$, then by $p$-integral closure, $\frac{x}{\pi}\in A$ so that $x$ is divisible by $\pi$ and so this map is an isomorphism, which ultimately proves the claim. 
\end{proof}
Finally, we record an elementary calculational lemma: 
\begin{lm}\label{lm:frob}
    Let $A$ be a commutative ring, and suppose $p$ has a $p^2$th root in $A$. If every element of $A$ has a $p$th root modulo $p^{\frac{1}{p}}$, then every element of $A$ has a $p$th root modulo $p$. 
\end{lm}
\begin{proof}
    Let $x\in A$. By assumption, we may write $x= z^p + p^{\frac{1}{p}}\alpha$ for some $\alpha$. Finding a similar decomposition for $\alpha$, and using our $p^2$th root of $p$, we can write $x=z^p+p^{\frac{1}{p}}(\beta^p+\gamma p^{\frac{1}{p}}) = (z+\beta p^{\frac{1}{p^2}})^p + \gamma p^{\frac{2}{p}}\gamma$ modulo $p$. Iterating this $p$ times, we reach an expression of the form $x=a^p + p^{\frac{p}{p}} y$ modulo $p$, so $x= a^p$ modulo $p$, as was to be shown. 
\end{proof}
\begin{proof}[Proof of \Cref{cor:perfectoid}]
By \Cref{cor:AzKünneth}, it suffices to prove that any class in $H^2_\et(\Spec(R[\frac{1}{p}]),\mu_{p^\infty})$ is killed after passing to some $p$-power Galois extension of $R[\frac{1}{p},\zeta_{p^\infty}]$. So, taking the integral closure of $R$ in the latter ring, we may assume without loss of generality that $R$ has a compatible system of primitive $p$-power roots of unity. Note that taking this integral closure preserves $p$-henselianness by \cite[Tag 09XK]{stacks-project} (we will use this implicitly again below). Since over such a ring, $\mu_{p^\infty}\cong \Z/p^\infty$, we are reduced to proving the same claim for $H^2_\et(\Spec(R[\frac{1}{p}]),\Z/p^\infty)$.

We may further assume that all units in $R[\frac{1}{p}]$ have $p$-power roots: since $R[\frac{1}{p}]$ has a $p$th root of $1$, extracting $p$th roots of units yields $p$-Galois extensions, so we can extract all $p$-power roots of units in $R[\frac{1}{p}]$, and then take the integral closure of $R$ in the resulting pro-$p$-Galois extension of $R[\frac{1}{p}]$. 

Furthermore, this pro-$p$-Galois extension contains a compatible system of $p$-power roots of $p$, and hence so does the integral closure of $R$. 

All in all, we are reduced to considering the following situation: $R$ is a $p$-henselian ring containing $\Z[\zeta_{p^\infty}]$, integrally closed in $R[\frac{1}{p}]$, and every unit in the latter has a $p$th root. We claim that in this case, the $p$-completion of $R$ is integral perfectoid. 

Granting this, by \Cref{purity} (\cite[Theorem 4.10]{purityBr}) we find that $H^2_\et(\Spec(R^\wedge_p[\frac{1}{p}]), \Z/p^\infty)=0$.

By \cite[Theorem 6.11]{bhattmathew}, $H^*_\et(\Spec(R[\frac{1}{p}], \Z/p^\infty)\cong H^*_\et(\Spec(R^\wedge_p[\frac{1}{p}]), \Z/p^\infty)$, and so the claim follows for $R[\frac{1}{p}]$ as well. 

So we now need to show that $R^\wedge_p$ is integral perfectoid. Note that since we are ultimately considering $R[\frac{1}{p}]$, we may assume without loss of generality that $R$ is $p$-torsion-free. We now want to apply \Cref{prop:criterionperfd}.  

Our $R$ clearly satisfies the last two items: $R$ is integrally closed, and $p$ admits a $p$th root in $R[\frac{1}{p}]$, and hence in $R$. We now need to argue that $R/p$ is semi-perfect. 

By assumption, given $x \in R$, $1 + px$ is a unit in $R$, so it is a $p$th power in $R[\frac{1}{p}]$, and hence (by integral closure) in $R$. So let $y\in R$ be such that $(1 + px) = y^p$. It follows that $(y-1)^p$ is divisible by $p$, and so, by integral closure of $R$, that $y-1$ is divisible by $p^{\frac{1}{p}}$. So $y = 1 + p^{\frac{1}{p}} z$ for some $z \in R$.

This implies that $1 + px = 1 + p z^p +
(\textnormal{terms divisible by } p^{1 + \frac{1}{p}})$. In other words, $x = z^p+ \alpha p^{\frac{1}{p}}$ for some $\alpha$. By \Cref{lm:frob}, we are done.  
\end{proof}

%We conclude this section with the following case, which does not seem to be so interesting since it seems to be mostly a rephrasing of earlier conditions, but it is a slightly different (maybe ``computational'') perspective on the problem:
%\begin{lm}
%    Let $X$ be a qcqs scheme with $p$ invertible and all $p$-power roots of unity. Suppose that the pro-$p$-truncated étale homotopy type of $X$, $\widehat{\Pi}^\et_{<\infty}(X)_p$ does not have a $\pi_2$. In this case, any $p$-power torsion Azumaya algebra over $X$ is split by a $p$-power Galois cover.  In particular, if $X[\zeta_{p^\infty}]$ satisfies this assumption, any $p$-power torsion Azumaya algebra in $X$ satisfies the Künneth isomorphism. 
%\end{lm}
%\begin{proof}
%    Fix a compatible system of $p$-power roots of unity in $X$. This induces a trivialization of $\mu_{p^\infty}$. Brauer classes are then classified by classes in $H^2_\et(X,\Z/p^\infty)$. By quasi-compactness, any such class lives in $H^2_\et(X,\Z/p^k)$ for some finite $k$, and is thus classified by a map $\widehat{\Pi}^\et_{<\infty}(X)_p\to B^2\Z/p^\infty$. Our assumption on $\pi_2$ shows that this factors through $\widehat{\Pi}_1^\et(X)_p$, that is, the $1$-truncation of this pro-$p$-space. In other words, there is a $1$-truncated $p$-finite space $A$ and maps $\widehat{\Pi}^\et_{<\infty}(X)_p\to A$, $A\to B^2\Z/p^k$ whose composite classifies our Brauer class. The first map classifies a $p$-power Galois cover of $X$ which therefore splits the Brauer class. 
%\end{proof}

\subsection{Local analysis}
\begin{nota}
    Let $E$ be a connective spectrum. We let $E[p^\infty]$ denote the fiber of the canonical map $E\to~E[\frac{1}{p}]$ in connective spectra. 
\end{nota}
\begin{defn}
    We say a Brauer class $\alpha$ on $X$ is $p$-good if it satisfies $K(1)$-local Künneth at the prime $p$, and we let $\widetilde{\BBr}(X)\subset \BBr(X)$ denote the full subspace spanned by $p$-good Brauer classes. This is clearly a sub-presheaf of $\BBr(-)$.
\end{defn}

Since strictly henselian local rings have no Brauer groups, the inclusion $\widetilde{\BBr}\to \BBr$ is an equivalence on stalks, and hence upon sheafification. 
\begin{cons}\label{cons:pshmap}
    By \Cref{cor:AzKünneth}, we obtain a map $\widetilde{\BBr}\to \PPic(L_{K(1)}K(-))$ of étale presheaves on $X$ where, by the latter, we mean the $K(1)$-local Picard group\footnote{That is, we consider modules that are $K(1)$-locally invertible.}.
\end{cons}
\begin{rmk}
    The general Künneth formula, that is, with ``arbitrary'' $X$-linear category $C$ is needed to construct this as a map of presheaves rather than just a pointwise map. Namely we need to verify that for $U\to X$ an étale map, $K(A)\otimes_{K(X)}K(U)\to K(A_{\mid U})$ is a $K(1)$-local equivalence. But by \Cref{lm:globalKfromlocalK}, a ``local'' Künneth formula involving $A$ and $A\op$ is sufficient to obtain this general Künneth formula, so if we want a subpresheaf of groups/spectra, we might as well ask for full Künneth. 
\end{rmk}

We now consider $\widetilde{\BBr}[p^\infty]$ as a subspace of $\BBr[p^\infty]\simeq \Gamma_\et(X,B^2\mu_{p^\infty})$ (cf. our convention, \Cref{conv:Br}) which, as is visible on the RHS, has a canonical $\Z$-module structure. 

Hence, the map $\widetilde{\BBr}\to \PPic(L_{K(1)}K(-))$ induces a map $$\widetilde{\BBr}[p^\infty]\to \Gpic(L_{K(1)}K(-))[p^\infty]$$ of étale presheaves on $X$. We have the following diagram of presheaves: 

\[\begin{tikzcd}
	{B^2\mu_{p^\infty}} & {\widetilde{\BBr}[p^\infty]} & {\Gpic(L_{K(1)}K(-))[p^\infty]} \\
	& {\BBr[p^\infty]}
	\arrow[from=1-1, to=1-2]
	\arrow[from=1-2, to=1-3]
	\arrow[from=1-2, to=2-2]
	\arrow["{\textnormal{Künneth formula, \Cref{conj:AzK}}}"', dashed, from=2-2, to=1-3]
\end{tikzcd}\]
where the bottom term and the top leftmost terms are the only ones known to be sheaves ahead of time, and the dashed arrow exists for $X$ whenever $X$ satisfies the conclusion of \Cref{conj:AzK}. All maps in this diagram induce equivalences on stalks and therefore on sheafification (since they are all truncated, there are no hypercompletion subtleties).

We now introduce a final gadget: 
\begin{nota}
    Let $(\Gpic^K)^a$ denote the étale sheafification of $$U\mapsto\Gpic(L_{K(1)}K(U)\otimes\Sph_{W(\overline{\mathbb F}_p)})[p^\infty] $$
\end{nota}
Here, recall that $\Sph_{W(\overline{\mathbb F}_p)}$ denotes Lurie's spherical Witt vectors, cf. \cite[Section 2.1]{ChroNS} (or see Lurie's original account \cite[Section 5.2]{Ell2}). Recall also that their goal here is to get the correct local picture, that is, to get the correct stalks.

We now state the following version of \Cref{thm:BrwithK}: 
\begin{thm}\label{thm:generalblahG}
    Let $X$ be a qcqs scheme with $p\in\Gm(X)$. There is a natural commutative diagram of the form: 

    \[\begin{tikzcd}
	{\widetilde{\BBr}(X)[p^\infty]} & {\Gpic(L_{K(1)}K(X)\otimes\Sph_{W(\overline{\mathbb F}_p)})[p^\infty]} \\
	{\BBr(X)[p^\infty]} & {(\Gpic^K)[p^\infty]^{a}(X)} 
	\arrow[from=1-1, to=1-2]
	\arrow[from=1-1, to=2-1]
	\arrow[from=1-2, to=2-2]
	\arrow[from=2-1, to=2-2]
	\end{tikzcd}\]
where: 
\begin{itemize}
    \item All the maps are inclusions of components;
    \item The bottom horizontal map is an equivalence. 
\end{itemize} 
\end{thm}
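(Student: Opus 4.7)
\emph{Construction of the diagram.} The plan is first to exhibit the four arrows. The left vertical is the tautological subpresheaf inclusion $\widetilde{\BBr}[p^\infty]\hookrightarrow\BBr[p^\infty]$ evaluated at $X$. For the top horizontal, I take the map from \Cref{cons:pshmap}, post-compose with the unit map $L_{K(1)}K(-)\to L_{K(1)}K(-)\otimes\Sph_{W(\overline{\mathbb F_p})}$, and pass to $[p^\infty]$ using the canonical $\Z$-module structure on $\widetilde{\BBr}[p^\infty]=\Gamma_\et(-,B^2\mu_{p^\infty})$. The right vertical will be the unit of étale sheafification. Finally, since strictly henselian local rings have trivial derived Brauer group, the stalks of $\widetilde{\BBr}[p^\infty]$ and $\BBr[p^\infty]$ agree, so $\BBr[p^\infty]$ is the étale sheafification of $\widetilde{\BBr}[p^\infty]$; I obtain the bottom horizontal by sheafifying the top (using this identification), and commutativity holds by naturality.

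\emph{Bottom equivalence via stalks.} Both sides of the bottom are étale sheaves on $X_\et$, so it suffices to check the stalk map at each geometric point $\bar x$ with residue field $\kappa$. The stalk of $\BBr[p^\infty]$ is $B^2\mu_{p^\infty}(\mathcal{O}_{X,\bar x}^{\mathrm{sh}})\simeq B^2(\Z/p^\infty)$ by vanishing of higher étale cohomology on strictly henselian local rings. On the other side, finitariness of $K$-theory and of $L_{K(1)}$ (height-$1$ telescope conjecture), Gabber--Suslin rigidity, and Suslin's theorem on $K$-theory of algebraically closed fields together yield $L_{K(1)}K(\mathcal{O}_{X,\bar x}^{\mathrm{sh}})\otimes\Sph_{W(\overline{\mathbb F_p})}\simeq E_1$ for a height-$1$ Lubin--Tate theory $E_1$ based on $\overline{\mathbb F_p}$. \Cref{obs:chroNSmap} at $n=1$ gives $\Gpic(E_1)[p^\infty]\simeq B^2(\Z/p^\infty)$, and the subfinitariness statement \Cref{thm:finitary} (with $H=\Z/p^\infty$ and $n=1$) ensures that $(\Gpic^K)^a_{\bar x}$ includes as an inclusion of components into $\Gpic(E_1)[p^\infty]$. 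Identifying, via \Cref{thm:Picshifttext}, the composite $B^2(\Z/p^\infty)=\BBr[p^\infty]_{\bar x}\to(\Gpic^K)^a_{\bar x}\hookrightarrow B^2(\Z/p^\infty)$ with an equivalence (coming from the chosen compatible system of $p$-power roots of unity in $\kappa$) then forces both factors to be equivalences.

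\emph{Inclusion of components for the remaining maps, and the main obstacle.} The left vertical is a monomorphism tautologically. For the top, commutativity of the diagram shows that the composition through the top then the right equals the composition through the left then the bottom; the latter is a monomorphism as a composite of a subpresheaf inclusion with an equivalence, so by left-cancellation of monomorphisms the top horizontal is itself a monomorphism. The main obstacle is the right vertical: one must show that the presheaf $\Gpic^K[p^\infty]$ is étale-separated, in the sense that the unit to its sheafification is a monomorphism at $X$ (it will generally fail to be an equivalence, as witnessed by the fact that the top is only known to be an equivalence for fields in \Cref{thm:BrwithK}). My plan is to derive this from the categorical version of subfinitariness (\Cref{cor:catfinitary}) applied to the sheafification process on $\Sh(X_\et,\Sp_{K(1)})$, combined with étale hyperdescent for $K(1)$-local $K$-theory \cite{CMNN} and the compact generation of \Cref{lm:etcpct}, so as to translate subfinitariness for commutative algebras into a separation statement on the sheaf topos.
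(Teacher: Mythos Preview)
Your construction of the diagram and your stalk computation for the bottom equivalence follow the paper's route. Two minor points: \Cref{thm:finitary} is stated for \emph{finite} $H$, so you are implicitly passing to the filtered colimit over $\Z/p^k$, which is fine since filtered colimits of monomorphisms in $\Ss$ are monomorphisms; and you should note, as the paper does, that both stalks are connected (the source by \Cref{conv:Br}), so the identification can be checked after looping, which is where \Cref{thm:Picshifttext} at $n=0$ literally applies.

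The gap is your plan for the right vertical. Subfinitariness (\Cref{cor:catfinitary}) controls behaviour under filtered \emph{colimits} of algebras, whereas what you need is a statement about \emph{limits} over covering sieves---separation of the presheaf $\Gpic^K$. These are not directly related, and it is unclear how your proposed combination of \Cref{cor:catfinitary}, \Cref{lm:etcpct}, and étale descent would yield it. The paper's argument here does not use subfinitariness at all. First, $U\mapsto L_{K(1)}K(U)\otimes\Sph_{W(\overline{\mathbb F_p})}$ is already an étale \emph{sheaf} of commutative ring spectra, by étale descent for $K(1)$-local localizing invariants. Second, for any limit $R\simeq\lim_I R_i$ of commutative algebras in a symmetric monoidal category with limits, the map $\PPic(\Mod_R)\to\lim_I\PPic(\Mod_{R_i})$ is an inclusion of components, because an invertible module $L$ is recovered as $\lim_i(L\otimes_R R_i)$; this is \Cref{ex:Picsub}. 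Applying $\Map(\Z,-)$ and $[p^\infty]$ preserves this, so $\Gpic^K$ is a separated presheaf, and \Cref{lm:subsheaf} then shows that the unit $\Gpic^K\to(\Gpic^K)^a$ is sectionwise an inclusion of components. You correctly flagged étale descent for $L_{K(1)}K$ as the key input; the missing idea is to pair it with this elementary limit behaviour of $\PPic$ rather than with subfinitariness.
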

As a direct corollary, obtained by looping the above diagram, we get:
\begin{cor}
    Let $X$ be a qcqs scheme with $p\in\Gm(X)$. The map $$\PPic(X)[p^\infty]\to \Gm(L_{K(1)}K(X)\otimes\Sph_{W(\overline{\mathbb F}_p)})[p^\infty]$$ sending a line bundle to its $K$-theory class is an equivalence. 
\end{cor}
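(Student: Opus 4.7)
The plan is to simply loop the commutative square provided by \Cref{thm:generalblahG} and observe that each of its edges becomes an equivalence after one application of $\Omega$. Concretely, I would start from
\[\begin{tikzcd}
	{\widetilde{\BBr}(X)[p^\infty]} & {\Gpic(L_{K(1)}K(X)\otimes\Sph_{W(\overline{\mathbb F_p})})[p^\infty]} \\
	{\BBr(X)[p^\infty]} & {(\Gpic^K)[p^\infty]^{a}(X)}
	\arrow[from=1-1, to=1-2]
	\arrow[from=1-1, to=2-1]
	\arrow[from=1-2, to=2-2]
	\arrow[from=2-1, to=2-2]
\end{tikzcd}\]
where by the theorem all four maps are inclusions of connected components and the bottom horizontal map is in fact an equivalence. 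The key general fact is that if $f : E \to F$ is an inclusion of components of connective spectra (equivalently, a monomorphism on $\pi_0$ which is an isomorphism on each component), then $\Omega f$ is an equivalence, because based loops at a point only see the connected component of that point. Applying $\Omega$ to the whole square therefore turns every edge into an equivalence; in particular, the looped top horizontal map is an equivalence.

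Next I would identify the looped corners with the desired objects. Since the operation $E \mapsto E[p^\infty]$ is the fiber of $E \to E[\tfrac1p]$ in connective spectra, it commutes with $\Omega$ (which is limit-preserving). With the convention of \Cref{conv:Br} that $\BBr(X) = \Gamma_\et(X, B^2\Gm)$, and using that $\PPic(X) = \Gamma_\et(X, B\Gm)$, we get $\Omega\BBr(X)[p^\infty] \simeq \PPic(X)[p^\infty]$. On the other side, $\Gpic$ is by definition $\Map_{\Sp^{\mathrm{cn}}}(\Z, \PPic)$ and $\Gm = \Omega\Gpic$, so $\Omega\Gpic(R)[p^\infty] \simeq \Gm(R)[p^\infty]$ for any $R$. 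Combined with the first step, looping the top horizontal map of the square produces exactly the map
\[\PPic(X)[p^\infty] \to \Gm(L_{K(1)}K(X)\otimes\Sph_{W(\overline{\mathbb F_p})})[p^\infty] \]
appearing in the statement, and it is an equivalence.

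The only thing requiring a brief sanity check is that this looped map is the one in the statement, namely the map sending a line bundle to its $K$-theory class. This is essentially by naturality: the top horizontal map in \Cref{thm:generalblahG} is (by \Cref{cons:pshmap}) the map $A \mapsto L_{K(1)}K(A)$ assembled from the Künneth formula, and looping corresponds to restricting to invertible $X$-linear categories of the form $\Perf(L)$ for $L$ a line bundle; for such $L$ the $K(1)$-local Künneth formula is trivially satisfied, and the assignment is just the usual $K$-theory class map. There is no serious obstacle here; the entire argument is formal once \Cref{thm:generalblahG} is in hand, and the only minor point to verify is the identification of $\Omega$ with passage from the categorified to decategorified Brauer/Picard (resp. Picard/units) invariant, which is a direct consequence of the definitions.
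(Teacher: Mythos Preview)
Your proof is correct and takes essentially the same approach as the paper: the paper's own proof is simply the one-line remark ``obtained by looping the above diagram,'' and you have spelled this out carefully, including the identification of the looped corners and the verification that the resulting map is the expected one.
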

\begin{rmk}
    Not all derived line bundles are strict elements of $\PPic$, as $\Sigma \O_X$ shows when $2\neq 0$, but the torsion ones are. More generally, the subspace $\Gamma(X_\et,B\Gm)\subset \PPic(X)$ consists of invertible elements that have canonical strict structures since it clearly has a $\Z$-module structure. 
\end{rmk}
As a corollary, we also get: 
\begin{cor}
    Let $X$ be a qcqs scheme with $p\in\Gm(X)$. Suppose $X$ satisfies the conclusion of \Cref{conj:AzK}, e.g. $X$ is the spectrum of a semilocal ring. In this case, the induced map $\BBr(X)[p^\infty]\to \Gpic(L_{K(1)}K(X)\otimes\Sph_{W(\overline{\mathbb F}_p)})[p^\infty]$ is an equivalence\footnote{In particular, the latter is equivalent to the global sections of its sheafified variant.}.
\end{cor}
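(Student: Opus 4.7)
The plan is to show that under the stated hypothesis, the top row of the diagram in \Cref{thm:generalblahG} is already an equivalence, and then chase the diagram.

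First, I would unpack the hypothesis: saying that $X$ satisfies the conclusion of \Cref{conj:AzK} means precisely that \emph{every} $p$-power torsion Brauer class over $X$ satisfies $K(1)$-local Künneth (via \Cref{lm:globalKfromlocalK}, pairing each $A$ with its opposite suffices to upgrade ``Künneth for $(A,A\op)$'' to ``Künneth for $A$ against arbitrary $X$-linear categories''). By definition of $\widetilde{\BBr}$, this translates to the statement $\widetilde{\BBr}(X)[p^\infty]\simeq \BBr(X)[p^\infty]$. For the example of a field $X=\Spec(F)$ with $\mathrm{char}(F)\neq p$, Merkurjev--Suslin \cite{MS} (or more precisely its consequence recorded above: every $p$-power torsion Brauer class becomes split by a $p$-power Galois extension after adjoining $p$-power roots of unity) combined with \Cref{cor:AzKünneth} gives the hypothesis.

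Next, consider the commutative square from \Cref{thm:generalblahG}:
\[
\begin{tikzcd}
\widetilde{\BBr}(X)[p^\infty] \ar[r] \ar[d,"\sim"'] & \Gpic(L_{K(1)}K(X)\otimes\Sph_{W(\overline{\mathbb F_p})})[p^\infty] \ar[d] \\
\BBr(X)[p^\infty] \ar[r,"\sim"'] & (\Gpic^K)^a[p^\infty](X).
\end{tikzcd}
\]
Under our hypothesis the left vertical arrow is an equivalence, and by \Cref{thm:generalblahG} the bottom horizontal arrow is already an equivalence. Hence the composite across the top-right corner is an equivalence. Since both the top horizontal and the right vertical are inclusions of components (again by \Cref{thm:generalblahG}), and an inclusion of components that is essentially surjective on $\pi_0$ is automatically an equivalence, the top horizontal map $\widetilde{\BBr}(X)[p^\infty] \to \Gpic(L_{K(1)}K(X)\otimes\Sph_{W(\overline{\mathbb F_p})})[p^\infty]$ must itself be an equivalence.

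There is no real obstacle here beyond carefully invoking what has already been proved; the content of the corollary is entirely in \Cref{thm:generalblahG} and in the identification $\widetilde{\BBr} = \BBr$ under the Künneth hypothesis. The only point to verify is the mild categorical lemma that if $f\colon A\to B$ and $g\colon B\to C$ are both inclusions of connected components and $g\circ f$ is an equivalence, then both $f$ and $g$ are equivalences, which is immediate on $\pi_0$ and on higher homotopy groups of the included components.
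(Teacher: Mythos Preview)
Your proof is correct and is precisely the diagram chase the paper leaves implicit when it writes ``As a corollary, we also get'' without further proof. The only content is, as you identify, that the K\"unneth hypothesis forces $\widetilde{\BBr}(X)[p^\infty]=\BBr(X)[p^\infty]$ (via \Cref{lm:globalKfromlocalK}) and then the square of \Cref{thm:generalblahG} collapses.
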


For the proof, we will need an elementary fact about sheafification. For this, we define: 
\begin{defn}
    Let $T$ be a Grothendieck site. A presheaf $F:T\op\to \Ss$ is called a separated presheaf if for every covering sieve $S\subset T_{/c}$, the map $F(c)\to \lim_S F(d)$ is an inclusion of components, i.e. $(-1)$-truncated (as opposed to an equivalence). 
\end{defn}
The key fact we will need about separated presheaves is the following:
\begin{lm}\label{lm:subsheaf}
    Let $T$ be a subcanonical Grothendieck site and $F:T\op\to\Ss$ be a separated presheaf, with sheafification $F^a$. The map $F\to F^a$ is section-wise an inclusion of components. 
\end{lm}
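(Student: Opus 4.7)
The plan is to show that for every $c \in T$, the map $F(c) \to F^a(c)$ is $(-1)$-truncated in $\Ss$, i.e., an inclusion of connected components. By the standard characterization of monomorphisms of spaces, this is equivalent to showing that $F \to F^a$ induces an equivalence on every path space: for any $y_1, y_2 \in F(c)$, the map $\Map_{F(c)}(y_1, y_2) \to \Map_{F^a(c)}(y_1, y_2)$ should be an equivalence.

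To prove this I would exploit left-exactness of sheafification together with the following key observation: if $F$ is separated, then the presheaf of mapping spaces between any two fixed sections over $c$ is already a sheaf. Concretely, consider the presheaf $\mathcal M$ on the slice site $T_{/c}$ defined by $(d \to c) \mapsto \Map_{F(d)}(y_1|_d, y_2|_d)$. By construction, $\mathcal M$ is a pullback in presheaves (the fiber of the diagonal $F \to F \times F$ over $(y_1, y_2)$), so sheafification, being left exact, sends it to the analogous presheaf built from $F^a$. In particular, evaluating at $\id_c$ yields $\mathcal M^a(\id_c) \simeq \Map_{F^a(c)}(y_1, y_2)$. So it suffices to check that $\mathcal M$ is itself a sheaf.

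For this: given any covering sieve $S$ on some $d \to c$, the sheaf condition for $\mathcal M$ at $S$ reduces (using that mapping spaces commute with limits in the target variable) to the claim that $\Map_{F(d)}(y_1|_d, y_2|_d) \to \Map_{\lim_S F}(y_1|_S, y_2|_S)$ is an equivalence. Separatedness of $F$ says precisely that $F(d) \to \lim_S F$ is $(-1)$-truncated, and any $(-1)$-truncated map in $\Ss$ induces equivalences on all mapping spaces (being an inclusion of components, the path space between any two points of the source is either preserved or both sides are empty). This establishes the sheaf property of $\mathcal M$ and concludes the argument.

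The main technical point to be careful about is the compatibility of sheafification with restriction to the slice site $T_{/c}$, needed to identify $\mathcal M^a(\id_c)$ with $\Map_{F^a(c)}(y_1, y_2)$; this is entirely standard but worth invoking explicitly. Notably, no iterated or transfinite plus construction is needed: once the mapping-space presheaf is recognized as already being a sheaf, left-exactness of $(-)^a$ delivers the conclusion directly.
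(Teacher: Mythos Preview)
Your proof is correct and follows essentially the same strategy as the paper's: both show that the mapping-space presheaf between two fixed sections is already a sheaf (using separatedness), then invoke left-exactness of sheafification to conclude. The paper packages this slightly differently, working on $T$ itself with the pullback of representables $\yo(c)\times_F \yo(c)$; this is where subcanonicity enters, to ensure that $\yo(c)$ is a sheaf. Your slice-site formulation replaces $\yo(c)$ by the terminal presheaf on $T_{/c}$, which is always a sheaf, so your argument in fact never uses the subcanonicity hypothesis---at the mild cost of invoking the (standard) compatibility of sheafification with restriction to slices, which you acknowledge.
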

\begin{proof}
Let $x,y\in F(c)$ for some $c\in T$, classified by two maps $y(c)\to F$. The pullback $y(c)\times_F y(c)$ is a sheaf since $F$ is a separated presheaf, and hence equivalent to its own sheafification. 

Since sheafification is left exact, it follows that $F\to F^a$ induces an equivalence $y(c)\times_F y(c)\to y(c)\times_{F^a} y(c)$. Evaluating at $c$ and taking fibers over $\id_c$ implies that $\Omega(F(c),x,y)\to\Omega(F^a(c),x,y)$ is an equivalence, as was to be shown. 
\end{proof}
\begin{ex}\label{ex:Picsub}
    Let $C$ be a symmetric monoidal category with limits. For every limit diagram $R=\lim_I R_i$, the map $\PPic(\Mod_R(C))\to \lim_I \PPic(\Mod_{R_i}(C))$ is an inclusion of components. Indeed, for every invertible $L$, $L\simeq \lim_i (L\otimes_R R_i)$ (this is true more generally for dualizable modules, and the statement is as well). 
\end{ex}
\begin{cor}\label{cor:Gmsubsheaf}
    Let $X$ be a qcqs scheme with $p\in\Gm(X)$. The functor $$U\mapsto \Gpic(L_{K(1)}K(U)\otimes\Sph_{W(\overline{\mathbb F}_p)})$$ is an étale separated presheaf. 
\end{cor}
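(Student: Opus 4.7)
The plan is to reduce the assertion to \Cref{ex:Picsub}, applied to étale hyperdescent for the presheaf $U\mapsto L_{K(1)}K(U)\otimes\Sph_{W(\overline{\mathbb F_p})}$ of commutative $K(1)$-local ring spectra. First, since $\Gpic(-)=\Map_{\Sp^{\mathrm{cn}}}(\Z,\PPic(-))$ and $\Map_{\Sp^{\mathrm{cn}}}(\Z,-)$ is a right adjoint, it preserves limits and $(-1)$-truncated maps; so it suffices to prove the analogous statement for the presheaf $U\mapsto \PPic(L_{K(1)}K(U)\otimes\Sph_{W(\overline{\mathbb F_p})})$.

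Fix an étale cover $V\to U$ with Cech nerve $V_\bullet$, and set $R_n:=L_{K(1)}K(V_n)\otimes\Sph_{W(\overline{\mathbb F_p})}$ and $R:=L_{K(1)}K(U)\otimes\Sph_{W(\overline{\mathbb F_p})}$. By \Cref{ex:Picsub} applied inside $C=\Sp_{K(1)}$, the map $\PPic(R)\to\lim_{[n]\in\Delta}\PPic(R_n)$ is automatically $(-1)$-truncated as soon as
\[
R \simeq \lim_{[n]\in\Delta} R_n
\]
in $\CAlg(\Sp_{K(1)})$; equivalently (since $\CAlg(\Sp_{K(1)})\to\Sp_{K(1)}$ creates limits) in $\Sp_{K(1)}$.

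The presheaf $L_{K(1)}K(-)$ is known to satisfy étale hyperdescent by Thomason-Trobaugh and \cite{CMNN}, giving the equivalence $L_{K(1)}K(U)\simeq\lim_{[n]}L_{K(1)}K(V_n)$ in $\Sp_{K(1)}$. What remains is to show that $-\otimes\Sph_{W(\overline{\mathbb F_p})}$ preserves this particular cosimplicial totalization. To this end, I would write $\Sph_{W(\overline{\mathbb F_p})}$ as the $K(1)$-local filtered colimit of its finite unramified subextensions $\Sph_{W(\mathbb F_{p^n})}$; each such subextension is dualizable over $\Sph_{K(1)}$, so tensoring with it individually preserves arbitrary limits, and the surviving problem is to interchange the filtered colimit defining $\Sph_{W(\overline{\mathbb F_p})}$ with the $\Delta$-totalization coming from the Cech descent.

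The main obstacle I expect is precisely this interchange. I would handle it by reducing to the case of a finite Galois cover $V\to U$, where the Cech totalization becomes the finite $G$-homotopy fixed-point operation $(-)^{hG}$ (as used in \Cref{lm:GaloisKünneth,cor:AzKünneth} via \cite[Theorem B]{CMNN}), and this manifestly commutes with filtered colimits; a general étale cover can be refined by such Galois covers at the $K(1)$-local level using that $BG$ is $K(1)$-locally affine. Alternatively, one may appeal to the finite cohomological dimension of $K(1)$-local étale descent, which ensures bounded convergence of the descent spectral sequence and hence compatibility of the totalization with filtered colimits.
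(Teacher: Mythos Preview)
Your reduction to \Cref{ex:Picsub} and the observation that $\Map_{\Sp^{\mathrm{cn}}}(\Z,-)$ preserves $(-1)$-truncated maps are fine, and match the paper. The difference is in how you establish that $U\mapsto L_{K(1)}K(U)\otimes\Sph_{W(\overline{\mathbb F_p})}$ is an étale sheaf.

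The paper does this in one line: the functor $C\mapsto L_{K(1)}K(C)\otimes\Sph_{W(\overline{\mathbb F_p})}$ is a $K(1)$-local localizing invariant (postcomposition of $K$-theory with an exact, $K(1)$-local functor), so étale descent holds directly by \cite[Theorem 5.39]{CM}. There is nothing to interchange.

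Your route---first establishing descent for $L_{K(1)}K(-)$ and then arguing that $-\otimes\Sph_{W(\overline{\mathbb F_p})}$ preserves the relevant totalization---creates a genuine obstacle which you then try to resolve. The resolution you sketch has a gap: it is not true that a general étale cover can be refined by a finite Galois cover (think of a Zariski cover), so reducing to $(-)^{hG}$ alone does not suffice. What does work is the decomposition étale $=$ Nisnevich $+$ finite Galois (as in \cite[Corollary B.7.6.2]{SAG}, used in \Cref{lm:etcpct}): Nisnevich descent is a finite-limit condition, preserved by any exact functor including tensoring; Galois descent is $(-)^{hG}$ for finite $G$, which commutes with colimits in $\Sp_{K(1)}$ by $1$-semiadditivity. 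So your argument can be repaired, but the paper's observation sidesteps the issue entirely.
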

\begin{proof}
We note that $C\mapsto L_{K(1)}K(C)\otimes\Sph_{W(\overline{\mathbb F}_p)}$ is a $K(1)$-local localizing invariant and hence its restriction along $U\mapsto \Perf(U)$ is an étale sheaf by \cite[Theorem 5.39]{CM}. The above example then guarantees that composing it with $\PPic$ and then $\Map(\Z,-)$ preserves the property of being a separated presheaf. 
\end{proof}

\begin{proof}[Proof of \Cref{thm:generalblahG}]
    Consider the étale presheaves $\widetilde{\BBr}[p^\infty]$ and $$\Gpic^K: U\mapsto \Gpic(L_{K(1)}K(U)\otimes \Sph_{W(\overline{\mathbb F}_p)})[p^\infty].$$
\Cref{cons:pshmap} and naturality of the map from a presheaf to its sheafification gives us a commutative diagram as follows, where $(-)^a$ denotes sheafification: 
\[\begin{tikzcd}
	{\widetilde{\BBr}[p^\infty]} & {\Gpic^K}   \\
	{\BBr[p^\infty]} & {(\Gpic^K)^a} 
	\arrow[from=1-1, to=1-2]
	\arrow[from=1-1, to=2-1]
	\arrow[from=1-2, to=2-2]
	\arrow[from=2-1, to=2-2]
\end{tikzcd}\]
Here we use implicitly that $\widetilde{\BBr}\to \BBr$ induces an equivalence on stalks and so the latter is the sheafification of the former. 

We now claim that the following map is an equivalence on stalks: $$\widetilde{\BBr}[p^\infty]\to \Gpic^K$$

Taking this for granted, we can conclude: from the first equivalence, we get that the our square consists of maps that all become equivalences upon sheafification. Since the bottom two are sheaves, the map induces an equivalence on global sections. The leftmost vertical map is definitionally an inclusion of components, and the right vertical map is also one by combining \Cref{lm:subsheaf} and \Cref{cor:Gmsubsheaf}. 

This lets us conclude, modulo the claim about stalks, which we now prove. By subfinitariness of $\Gpic[p^\infty]$, namely \Cref{thm:finitary}, taking stalks yields a full subspace of what one gets by \emph{evaluating} at strictly henselian local rings. By Gabber rigidity, this amounts to evaluating at separably closed fields, where we get therefore a full subspace of $\Gpic(\KU_p\otimes \Sph_{W(\overline{\mathbb F}_p)})[p^\infty]$. The calculation of \cite[Theorem H]{ChroNS} shows that in this case, this space is connected, and hence the stalk is equivalent to this evaluation. 

Now the source of our map also satisfies Gabber rigidty, so the whole question is reduced to evaluating at a separably closed field $F$ of characteristic $\neq p$. There, we apply \Cref{thm:Picshift} in the case $n=0$ to conclude - note that this theorem only applies to $\Gpic(F)[p^\infty]$ (as opposed to any kind of Brauer space)  for our separably closed field $F$, and there may be issues with $\pi_0$. It is precisely for this that we adopted the convention of \Cref{conv:Br} which guarantees that $\pi_0(\BBr(F)[p^\infty]) = 0$ for a separably closed field $F$, so that we may freely check that the map is an equivalence after looping.
\end{proof}
\begin{rmk}\label{rmk:whatsneeded}
  There are other ways than \Cref{cor:AzKünneth} to obtain a Künneth formula for Azumaya algebras. For example, when $X$ is a variety over $\mathbb C$, the property of being represented by a classical Azumaya algebra and \emph{topologically trivial} in the sense of \cite[Definition 5.4]{dJPerry} (see also Lemma 5.5 in \textit{loc. cit.}) would do the trick. The proof of this uses \Cref{lm:globalKfromlocalK}, as it allows us to test Künneth with geometric objects on both sides - the geometric objects here being the associated Severi--Brauer varieties whose associated homotopy types are simply trivial projective bundles. 

  For non-topologically trivial Azumaya algebras, the question of Künneth is equivalent to an Eilenberg--Moore type question for ($p$-complete) $\KU$, and this seems a worthwhile thing to investigate. 
\end{rmk}

\subsection{$K(1)$-local $K$-theory sheaves}
In this section, we carry out the same local analysis as in the previous one, except that we are free not to worry about the Künneth formula since by taking $K$-theory \emph{sheaves} instead of $K$-theory, it comes to us for free - the followng lemma appeared without $K(1)$-localization in \cite[Proposition 6.7]{moulinos}. To state it, we introduce a notation:  
\begin{nota}
    Let $M\in\Mot_X$. We let $\mathcal K_M$ denote the étale sheaf $$Y\mapsto L_{K(1)}K(\U^X(\Perf(Y))\otimes_X M)$$ 

    For $M= \U^X(\Perf(X))$, the unit in $\Mot_X$, we simply use $\mathcal K$. 
\end{nota}
\begin{lm}\label{lm:cellularKunneth}
    Let $M,N\in\Mot_X$ be such that there is an étale cover $Y\to X$ with $M_Y$ cellular in $\Mot_Y$. In this case, the canonical map $\mathcal K_M\otimes_\mathcal K \mathcal K_N \to \mathcal K_{M \otimes N}$ is an equivalence in $\Mod_\mathcal K(\Sh(X_\et,\Sp_{K(1)}))$. 
\end{lm}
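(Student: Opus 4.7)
The plan is to check the equivalence étale-locally on $X$, reducing to the case where $M$ itself is cellular in $\Mot_X$, and then conclude via a colimit-preservation argument.

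For the first step, I will use that pullback along the étale cover $f\colon Y\to X$ induces a conservative functor on the relevant sheaf categories of modules. Unwinding definitions, $f^*\mathcal K\simeq \mathcal K$ on $Y_\et$, and $f^*\mathcal K_M\simeq \mathcal K_{M_Y}$ via the identification $\Perf(Z)\otimes_X M\simeq \Perf(Z)\otimes_Y M_Y$ for $Z\to Y$ étale. Since tensor product in $\Mod_{\mathcal K}(\Sh)$ and the constructions $\mathcal K_{(-)}$ are natural with respect to this pullback, the Künneth map pulls back to the analogous map for $M_Y$ and $N_Y$. We may therefore assume $M$ is already cellular in $\Mot_X$.

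For the second step, I will let $\mathcal C\subset\Mot_X$ denote the full subcategory of those $M$ for which the canonical map is an equivalence. At $M=\one$, both sides equal $\mathcal K_N$, so $\one\in\mathcal C$. Moreover, both functors $M\mapsto\mathcal K_M\otimes_\mathcal K\mathcal K_N$ and $M\mapsto \mathcal K_{M\otimes N}$ preserve small colimits in $M$: the relative tensor product $-\otimes_\mathcal K\mathcal K_N$ is a left adjoint, tensoring with $N$ in $\Mot_X$ is colimit-preserving, and $M\mapsto \mathcal K_M$ is colimit-preserving (the main technical point, discussed below). Hence $\mathcal C$ is a colimit-closed subcategory containing $\one$, so contains the localizing subcategory generated by $\one$, namely all cellular motives, which finishes the argument.

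The main obstacle is to verify that $M\mapsto \mathcal K_M$, viewed as a functor from $\Mot_X$ to $\Mod_\mathcal K(\Sh(X_\et,\Sp_{K(1)}))$, preserves small colimits. Sectionwise on $U\to X$ étale, $\mathcal K_M(U)=L_{K(1)}K(\Perf(U)\otimes_X M)$ is colimit-preserving in $M$: by the universal property of $\Mot_X$, the localizing invariant $L_{K(1)}K$ descends to a colimit-preserving functor from the presentable category $\Mot_X$ to $\Sp_{K(1)}$, and $\Perf(U)\otimes_X-$ preserves colimits. Since $\mathcal K_M$ is already an étale sheaf (by étale descent of $K(1)$-local $K$-theory, cf.~\cite[Theorem 5.39]{CM}, applied to the localizing invariant $C\mapsto L_{K(1)}K(C\otimes_X M)$) and colimits in sheaves are sheafifications of presheaf colimits, this sectionwise colimit-preservation lifts to colimit-preservation of the sheaf-valued functor, as required.
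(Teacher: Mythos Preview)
Your proof is correct and follows essentially the same approach as the paper: reduce by pullback along the \'etale cover to the case where $M$ is cellular, then verify the equivalence by noting that both sides preserve colimits in $M$ and agree on the unit. The paper phrases the second step slightly differently, observing that for cellular $M$ the map from the \emph{presheaf} tensor product is already an equivalence sectionwise (which amounts to the same colimit argument carried out over each $U$), whereas you work directly at the sheaf level and appeal to the fact that $\mathcal K_M$ is already a sheaf; these are equivalent formulations of the same idea.
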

\begin{proof}
    By pullback, we see that we may assume without loss of generality that $M$ is cellular. Then the map from the presheaf tensor product is actually already an equivalence, without sheafifying.
\end{proof}

In particular, we get a map of presheaves $$\BBr(-)[p^\infty]\to \Gpic(\Mod_\mathcal K(\Sh((-)_\et, \Mod_{\Sph_{W(\overline{\mathbb F}_p)}}(\Sp_{K(1)}))))[p^\infty]$$
and to prove \Cref{thm:BrPicSh} we simply need to identify the stalks. Let us recall the statement here:
\begin{thm}\label{thm:Sh}
     Let $X$ be a qcqs scheme such that $p\in\Gm(X)$. The canonical map $$\BBr(X)[p^\infty]\to \Gpic(\Mod_{\mathcal K}(\Sh(X_\et, \Mod_{\Sph_{W(\overline{\mathbb F}_p)}}(\Sp_{K(1)}))))[p^\infty]$$ is an equivalence, where $\mathcal K$ is the étale sheaf given by $K(1)$-local $K$-theory. 
\end{thm}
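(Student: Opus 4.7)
The argument proceeds in parallel with that of \Cref{thm:generalblahG}, with the key simplification that \Cref{lm:cellularKunneth} bypasses any Künneth issue: since an Azumaya algebra is étale-locally trivial, hence locally cellular as an $X$-linear category, the associated $\mathcal K$-module sheaf is automatically invertible in the sheaf category. Thus the comparison map exists on all of $\BBr[p^\infty]$ without any restriction. Concretely, for $U \in X_\et$, a class $\alpha \in \BBr(U)$ represented by an invertible $C \in \Catperf_U$ is sent to the $\mathcal K_U$-module sheaf $V \mapsto L_{K(1)}K(\Perf(V) \otimes_{\Perf(U)} C) \otimes \Sph_{W(\overline{\mathbb F_p})}$; passing to $[p^\infty]$ gives the desired map of étale presheaves on $X$.

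The next step is to verify that both sides are étale sheaves so that we may check equivalence on stalks. The source is a sheaf by \Cref{conv:Br}. For the target, the assignment $U \mapsto \Sh(U_\et, \Mod_{\Sph_{W(\overline{\mathbb F_p})}}(\Sp_{K(1)}))$ is a sheaf of symmetric monoidal presentable categories (étale descent for $\Sp_{K(1)}$-valued sheaves, via \Cref{lm:etcpct}), and the successive operations $\Mod_{\mathcal K}(-)$, $\PPic$, $\Map_\mathbb Z(\mathbb Z, -)$, and $(-)[p^\infty]$ all preserve the relevant limits, so the composite presheaf is a sheaf on $X_\et$.

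It then suffices to check the map induces an equivalence on stalks. Let $\bar x \to X$ be a geometric point; by Gabber rigidity the stalk reduces to the value at the separably closed field $F = \kappa(\bar x)$, of characteristic $\neq p$. The stalk of the source is $\BBr(F)[p^\infty] \simeq B^2 \mu_{p^\infty}(F) \simeq \Sigma^2(\mathbb Q_p/\mathbb Z_p)$. For the target, the sheaf category $\Sh(\bar x_\et, \Mod_{\Sph_{W(\overline{\mathbb F_p})}}(\Sp_{K(1)}))$ collapses to $\Mod_{\Sph_{W(\overline{\mathbb F_p})}}(\Sp_{K(1)})$ and the stalk $\mathcal K_{\bar x}$ is $L_{K(1)}K(F) \otimes \Sph_{W(\overline{\mathbb F_p})}$, which by $K(1)$-local Suslin rigidity is a height-$1$ Lubin--Tate theory $E_1$ based on $\overline{\mathbb F_p}$. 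By subfinitaryness (\Cref{cor:catfinitary} applied at $n=1$ to $H = \mathbb Z/p^k$, then passing to the colimit in $k$ as in the proof of \Cref{thm:generalblahG}), the stalk sits as a full subspace of this evaluation, namely $\Gpic(E_1)[p^\infty] \simeq \Sigma^2 \mathbb Z/p^\infty$, the latter equivalence coming from \Cref{obs:chroNSmap} at $n=1$.

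The final point is to confirm that the stalk map realizes this abstract matching. Since $\pi_0$ vanishes on both sides (the Brauer group of a separably closed field vanishes), and the common target is connected, it suffices to check the equivalence after looping once. The looped map is the canonical arrow $\PPic(F)[p^\infty] \to \Gm(L_{K(1)}K(F) \otimes \Sph_{W(\overline{\mathbb F_p})})[p^\infty]$ sending a line bundle to the class of its $K$-theory in the stalk of $\mathcal K$, which is precisely the equivalence provided by \Cref{thm:Picshift} at $n = 0$. The main obstacle is this last identification of the looped stalk map; but, exactly as in \Cref{thm:generalblahG}, the looping trick reduces it to a case already handled, rather than requiring us to analyze the delooped map directly.
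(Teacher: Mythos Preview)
Your proposal follows essentially the same route as the paper: use \Cref{lm:cellularKunneth} to get the map on all of $\BBr[p^\infty]$, verify the target is an étale sheaf, compute the stalk of the category-valued presheaf via Gabber rigidity and topos theory as $\Mod_{\KU_p\otimes\Sph_{W(\overline{\mathbb F_p})}}(\Sp_{K(1)})$, invoke \Cref{cor:catfinitary} to pass from the stalk of $\Gpic[p^\infty]$ to $\Gpic[p^\infty]$ of the stalk (using connectedness to upgrade subfinitariness to finitariness), and then identify the stalk map via \Cref{thm:Picshift} at $n=0$ after looping.

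One imprecision worth tightening: you justify the target being a sheaf by asserting that ``$\PPic$, $\Map_\Z(\Z,-)$, and $(-)[p^\infty]$ all preserve the relevant limits''. As stated this is not true for $\PPic$ on general limits of symmetric monoidal categories; \Cref{ex:Picsub} in the paper only gives that $\PPic$ yields a \emph{separated} presheaf (an inclusion of components). The correct reason the target is a sheaf is specific to module categories in sheaf toposes: an object of $\Mod_{\mathcal K}(\Sh(U_\et,-))$ is invertible if and only if its restrictions to an étale cover are, so compatible families of invertible $\mathcal K$-modules glue to invertible ones. The paper itself simply asserts ``both source and target are truncated étale sheaves'' without further elaboration, so this is not a divergence from their argument, just a place where your stated justification should be sharpened.
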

\begin{proof}
   We do the same proof as before, where we ``simply'' have to identify the stalks. Now by \Cref{lm:etcpct}, the presheaf $X\mapsto \Mod_\mathcal K(\Sh(X_\et,\Mod_{\Sph_{W(\overline{\mathbb F}_p)}}(\Sp_{K(1)})))$ has values in $(\CAlg(\PrL_\st)_{(\omega)})_{\Sp_{K(1)}/}$ and so $\Gpic(-)[p^\infty]$ is subfinitary on it by \Cref{cor:catfinitary}. 

   Now by combining Gabber rigidity with general topos theory, the stalk of $U\mapsto \Mod_\mathcal K(\Sh(U_\et,\Mod_{\Sph_{W(\overline{\mathbb F}_p)}}(\Sp_{K(1)})))$ a point $\overline{x}\to X$ is simply $\Mod_{\KU_p\otimes\Sph_{W(\overline{\mathbb F}_p)}}(\Sp_{K(1)})$ whose $\Gpic[p^\infty]$ is again connected, so that for this specific diagram, subfinitariness is actual finitariness. 
Further, the map we have from $\BBr(\O_{X,\overline{x}})[p^\infty]$ is the same as earlier, and in particular an equivalence. 

   Since both source and target are truncated étale sheaves and hence hypersheaves, this stalkwise isomorphism implies an equivalence. 
\end{proof}
\Cref{thm:BrwithK} and \Cref{thm:BrPicSh} together give us the following picture of the landscape - we have a sequence of inclusions of spaces: $$\widetilde{\BBr}(X)[p^\infty]\subset \Gpic(L_{K(1)}K(X)\otimes\Sph_{W(\overline{\mathbb F}_p)})[p^\infty]\subset \Gpic(\Mod_\mathcal K(\Sh(X_\et,\Mod_{\Sph_{W(\overline{\mathbb F}_p)}}(\Sp_{K(1)}))))\simeq \BBr(X)[p^\infty]$$
Thus some $p$-power torsion Azumaya algebras can be interpreted in terms of Picard elements of $L_{K(1)}K(X)$, and all such Picard elements can be interpreted in terms of $p$-power torsion Azumaya algebras, but not necessarily in the expected way. It is tempting to believe that all inclusions in the above display are equivalences, for this would resolve this apparent oddity. 
%\comment{clarify thoughts on Dustin's affineness notion}
\subsection{Forgetting the strict structure}
We start by proving \Cref{thm:nostrict}, as a ``warm-up'' to \Cref{thm:nostrictBr}. For this, we start with an easy lemma: 
\begin{lm}\label{lm:cohomology1}
 Let $\mathcal F$ be a $1$-truncated sheaf of spectra. The map $\Sigma\pi_1(\mathcal F)\to \mathcal F$ induces an injection on $\pi_0$ of global sections.
\end{lm}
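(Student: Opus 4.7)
The plan is to use the truncation fiber sequence and the long exact sequence in homotopy groups of global sections.

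Concretely, since $\mathcal F$ is $1$-truncated, one has a natural fiber sequence of sheaves of spectra
\[
\Sigma\pi_1(\mathcal F)\longrightarrow \mathcal F\longrightarrow \pi_0(\mathcal F),
\]
where the last term is viewed as a discrete sheaf (i.e.\ an Eilenberg--MacLane sheaf of spectra concentrated in degree $0$). Applying the (derived) global sections functor $\Gamma$, which is exact on sheaves of spectra, yields a fiber sequence
\[
\Gamma\bigl(\Sigma\pi_1(\mathcal F)\bigr)\longrightarrow \Gamma(\mathcal F)\longrightarrow \Gamma\bigl(\pi_0(\mathcal F)\bigr).
\]

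The associated long exact sequence contains the segment
\[
\pi_1\Gamma\bigl(\pi_0(\mathcal F)\bigr)\longrightarrow \pi_0\Gamma\bigl(\Sigma\pi_1(\mathcal F)\bigr)\longrightarrow \pi_0\Gamma(\mathcal F).
\]
It therefore suffices to verify that $\pi_1\Gamma(\pi_0(\mathcal F))=0$. But $\pi_0(\mathcal F)$ is a discrete sheaf of abelian groups, so its derived global sections compute sheaf cohomology, i.e.\ $\pi_{-n}\Gamma(\pi_0(\mathcal F))\simeq H^{n}(X,\pi_0(\mathcal F))$, which vanishes for $n<0$. In particular $\pi_1\Gamma(\pi_0(\mathcal F))=0$, proving injectivity.

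There is essentially no obstacle here: this is a routine Postnikov/long-exact-sequence argument, used only to feed into the next lemma. The only mild care needed is to note that ``$1$-truncated sheaf of spectra'' means $\pi_i(\mathcal F)=0$ for $i>1$ (and $i<0$, since the statement only cares about connective data in this context), so that the Postnikov fiber sequence displayed above is indeed available and identifies the fiber with $\Sigma\pi_1(\mathcal F)$.
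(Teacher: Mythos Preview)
Your proof is correct and essentially identical to the paper's: both use the Postnikov fiber sequence, apply global sections, and observe that the relevant term in the long exact sequence vanishes because global sections of a coconnective sheaf remain coconnective. The only difference is that the paper writes the cofiber as $\tau_{\leq 0}\mathcal F$ rather than $\pi_0(\mathcal F)$, which sidesteps the connectivity assumption you flag in your final paragraph; since the lemma as stated does not assume connectivity, you should make this replacement (the argument then goes through verbatim, as ``$\pi_1\Gamma(\tau_{\leq 0}\mathcal F)=0$'' holds for the same reason you give).
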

\begin{proof}
We have a fiber sequence $\Omega\tau_{\leq 0}(\mathcal F)\to \Sigma \pi_1(\mathcal F)\to \mathcal F$ of sheaves. The result follows from the long exact sequence of homotopy groups obtained after applying global sections and the fact that global sections of $\Omega\tau_{\leq 0}\mathcal F$ remain coconnective. 
    \end{proof}
We are now ready to prove \Cref{thm:nostrict}, which we recall here:
\begin{thm}\label{thm:nostricttext}
    Let $X$ be a qcqs scheme such that $p\in\Gm(X)$. The canonical map $$\PPic(X)[p^\infty]\to GL_1(L_{K(1)}K(X))[p^\infty]$$ is injective on $\pi_0$.
\end{thm}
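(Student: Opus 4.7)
I would follow the template used throughout this section: compare both sides as étale (pre)sheaves on $X_\et$ and analyze their stalks. First, $\mathcal G := GL_1(L_{K(1)}K(-))[p^\infty]$ is an étale sheaf of connective spectra, since $L_{K(1)}K(-)$ is one by \cite[Theorem 5.39]{CM} and both $GL_1$ and $(-)[p^\infty]$ preserve limits. In parallel, \Cref{ex:Picsub} gives that $\PPic(-)$ is a separated étale presheaf, and since $(-)[p^\infty]$ is itself a limit construction (a fiber in connective spectra), $\mathcal F := \PPic(-)[p^\infty]$ is also separated. Thus \Cref{lm:subsheaf} implies that the canonical map $\PPic(X)[p^\infty] \to \mathcal F^a(X)$ is sectionwise an inclusion of components, in particular injective on $\pi_0$.

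Next I would identify $\mathcal F^a$ explicitly. At a separably closed residue field $F$ of characteristic $\neq p$, $\PPic(F)$ has $\pi_0 = \mathbb Z$, $\pi_1 = F^\times$ ($p$-divisible since $F$ is separably closed of characteristic $\neq p$), and vanishing higher homotopy, so the long exact sequence of the connective fiber $E \to E[\tfrac{1}{p}]$ yields $\PPic(F)[p^\infty] \simeq B\mu_{p^\infty}$. By Gabber rigidity, $\mathcal F^a \simeq B\mu_{p^\infty}$ as an étale sheaf of spectra. An analogous calculation using $\pi_*(\KU_p)$, or equivalently an extraction of the $\pi_1$-content of \Cref{thm:Picshift} at $n=0$, shows that $\pi_1$ of stalks of $\mathcal G$ is also $\mu_{p^\infty}$, and that the stalk map $\mathcal F^a \to \mathcal G$ induces an isomorphism on $\pi_1$.

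The final step applies \Cref{lm:cohomology1} to the sheaf-theoretic 1-truncation $\tau_{\leq 1}\mathcal G$, which is a 1-truncated sheaf with $\pi_1 = \mu_{p^\infty}$. Because $\pi_0 B\mu_{p^\infty} = 0$, any sheaf map $B\mu_{p^\infty} \to \tau_{\leq 1}\mathcal G$ factors through the canonical inclusion $\Sigma\pi_1(\tau_{\leq 1}\mathcal G) \to \tau_{\leq 1}\mathcal G$, and the factorization is determined by its induced map on $\pi_1$. Since our map induces an isomorphism on $\pi_1$, it agrees, up to an automorphism of $B\mu_{p^\infty}$, with the canonical map of \Cref{lm:cohomology1}, hence is injective on $\pi_0$ of global sections. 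Combining this with \Cref{lm:subsheaf} and the elementary observation that the injectivity of the composite $\PPic(X)[p^\infty] \to \mathcal G(X) \to \tau_{\leq 1}\mathcal G(X)$ on $\pi_0$ forces injectivity of the first factor $\PPic(X)[p^\infty] \to \mathcal G(X)$ on $\pi_0$ concludes the argument.

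The only nonformal step, and thus the main obstacle, is the stalk-level verification that the induced map on $\pi_1 \cong \mu_{p^\infty}$ is an isomorphism (and not an arbitrary endomorphism of $\mu_{p^\infty}$, or worse, the zero map); this is essentially a rereading of \Cref{thm:Picshift} at height zero, noting that the underlying non-strict spectrum-level map already makes sense without the spherical Witt vectors appearing in \Cref{thm:BrwithK}.
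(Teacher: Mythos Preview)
Your proposal is correct and follows essentially the same route as the paper's proof: both reduce to identifying $\PPic[p^\infty]$ stalkwise with $B\mu_{p^\infty}$, verifying that the map to (a $1$-truncation of) $\mathcal G$ is a $\pi_1$-isomorphism on stalks via Gabber rigidity and the height-$0$ case of \Cref{thm:Picshift}, and then invoking \Cref{lm:cohomology1}. One minor simplification: your detour through \Cref{ex:Picsub} and \Cref{lm:subsheaf} is unnecessary, since $\Perf(-)$ satisfies étale descent and $\PPic$ commutes with limits of symmetric monoidal categories, so $\PPic(-)[p^\infty]$ is already an étale sheaf (hence $\mathcal F = \mathcal F^a$); the paper uses this implicitly.
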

    \begin{proof}
  Consider the étale presheaf 
    $$\mathcal G: U\mapsto GL_1(L_{K(1)}K(U))[p^\infty]$$ as well as $\mathcal H := \mathcal G_{\leq 1}$. We have a map $\PPic[p^\infty]\to\mathcal G\to\mathcal H$.

    We claim that $\PPic[p^\infty]\to \mathcal H$ is a $\pi_1$-isomorphism on stalks.  Since both are $1$-truncated, \Cref{lm:cohomology1} will then give us the result. 

    For this, we note that $\pi_1(GL_1(R)[p^n])\cong \pi_1(R[p^n])$ preserves filtered colimits on $\CAlg(\Sp_{K(1)})$, and hence the stalks of the target presheaf are once again given by \emph{evaluating} at strictly henselian local rings, and hence, by Gabber rigidity, at separably closed fields. 

This means we have to fix some field $F$ of characteristic $\neq p$ and consider the map $\mu_{p^\infty}(F)\to \pi_2(\PPic(L_{K(1)}K(F))[p^\infty])\cong \coker(\pi_2(K(F)_p)\to \pi_2(K(F)_p[\frac{1}{p}]))$. That this is an isomorphism is essentially part of the statement of Snaith's theorem. Alternatively, it follows from \Cref{thm:Picshift} together with the observation that $\Gm(\KU_p)[p^\infty]\to GL_1(\KU_p)[p^\infty]$ is an isomorphism on $\pi_1$, as follows from \cite[Remark 8.18]{ChroNS}.
\end{proof}
We now prove \Cref{thm:nostrictBr}, which was: 
\begin{thm}\label{thm:nostrictBrtext}
    Let $X$ be a qcqs scheme such that $p\in\Gm(X)$. The canonical map $$\BBr(X)[p^\infty]\to \PPic(\Mod_{\mathcal K}(\Sh(X_\et, \Sp_{K(1)})))[p^\infty]$$ is injective on $\pi_0$. In particular, the restricted map $$\widetilde{\BBr}(X)[p^\infty]\to \PPic(L_{K(1)}K(X))[p^\infty]$$ is also injective on $\pi_0$.
\end{thm}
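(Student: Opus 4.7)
The plan is to mirror the proof of \Cref{thm:nostricttext} one categorical level up. Set
\[
\mathcal G\colon U\mapsto \PPic(\Mod_{\mathcal K}(\Sh(U_\et,\Sp_{K(1)})))[p^\infty],
\]
an étale sheaf of spectra, and $\mathcal H:=\tau_{\leq 2}\mathcal G$. Analogously to \Cref{cons:pshmap}, but relying on \Cref{lm:cellularKunneth} rather than \Cref{cor:AzKünneth} (so that no Künneth hypothesis on individual Brauer classes is needed in the sheaf setting), we obtain a map $\BBr[p^\infty]\to \mathcal G$; since $\BBr[p^\infty]\simeq \Sigma^2\mu_{p^\infty}$ is $2$-truncated, it factors through $\mathcal H$, and it suffices to show that $\pi_0\Gamma(X,\BBr[p^\infty])\to \pi_0\Gamma(X,\mathcal H)$ is injective.

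By \Cref{cor:catfinitary} and \Cref{lm:etcpct}, combined with Gabber rigidity, the stalks of $\pi_*\mathcal H$ at a geometric point are read off from $\PPic(\KU_p)[p^\infty]$. The $\pi_2$-computation used in the proof of \Cref{thm:nostricttext} (or equivalently \Cref{thm:Picshift} at $n=0$) yields $\pi_2\PPic(\KU_p)[p^\infty]\cong \mu_{p^\infty}$, while $\pi_1\PPic(\KU_p)[p^\infty]=(\pi_0\KU_p)^\times[p^\infty]=\Z_p^\times[p^\infty]$ is $0$ for $p$ odd and $\{\pm 1\}$ for $p=2$; moreover the induced map $\mu_{p^\infty}=\pi_2\BBr[p^\infty]\to \pi_2\mathcal H$ is an isomorphism of sheaves. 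The fiber sequence $\Sigma^2\pi_2\mathcal H\to \mathcal H\to \tau_{\leq 1}\mathcal H$ then produces the long exact sequence of global sections
\[
\pi_1\Gamma\mathcal H\to H^0(X,\pi_1\mathcal H)\xrightarrow{\partial} H^2(X,\mu_{p^\infty})\to \pi_0\Gamma\mathcal H,
\]
where we used $\pi_1\Gamma\tau_{\leq 1}\mathcal H\cong H^0(X,\pi_1\mathcal H)$ for a $1$-truncated sheaf. The kernel of our map is thus the image of $\partial$, which coincides with the $d_2$-differential in the descent spectral sequence.

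It remains to show $\partial=0$. For $p$ odd, $\pi_1\mathcal H$ vanishes stalkwise, hence as a sheaf, and $\partial=0$ trivially. For $p=2$, $\pi_1\mathcal H$ is locally constant with stalk $\Z/2$, and its global sections on each connected component are generated by $-1$; but $-1\in\Gm(X)$ (available since $2\in\Gm(X)$) lifts canonically along the unit map $\O_X\to\mathcal K$ to a global $2$-torsion unit of $\Mod_\mathcal K(\Sh(X_\et,\Sp_{K(1)}))$, which is by definition an element of $\pi_1\Gamma\mathcal G$; truncation to $\mathcal H$ gives an element of $\pi_1\Gamma\mathcal H$ projecting to $-1\in H^0(X,\pi_1\mathcal H)$, so $\partial(-1)=0$ by exactness. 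The ``in particular'' clause follows by composing with the canonical symmetric monoidal functor $\Mod_{L_{K(1)}K(X)}(\Sp_{K(1)})\to \Mod_\mathcal K(\Sh(X_\et,\Sp_{K(1)}))$: using that Künneth holds on $\widetilde\BBr$, the restricted map factors through $\PPic(L_{K(1)}K(X))[p^\infty]$, and its injectivity follows from injectivity of the composite already established. The main difficulty I anticipate is the $p=2$ case, which is handled precisely by the ``$-1$ lifts from $\Gm(X)$'' observation.
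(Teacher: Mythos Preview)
Your approach is correct and close in spirit to the paper's, though the details differ in presentation. Both arguments reduce the $p=2$ case to showing that a boundary map $\partial\colon H^0_\et(X,\Z/2)\to H^2_\et(X,\mu_{2^\infty})$ vanishes, and both do this by lifting $-1$ to $\pi_1$ of the relevant intermediate object. The paper, however, does not truncate $\mathcal G$ directly: it first restricts to the \emph{locally trivial} Picard spectrum $\mathbf L\PPic\simeq\Gamma(X_\et,BGL_1(\mathcal K))$, and then builds a concrete receiving sheaf $B_k:=B\bigl(\lim_n(GL_1(K^\et/2^n)[2^k])_{\leq 1}\bigr)$ using the comparison $\tau_{\geq 0}(K^\et)_p\simeq\tau_{\geq 0}L_{K(1)}K$. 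For odd $p$ this explicit model yields the stronger \Cref{thm:injoddprimes} (an actual retraction, not just injectivity), which your truncation argument does not recover. At $p=2$ the paper lifts all of $H^0_\et(X,\Z/2)$ via the constant sheaf map $BGL_1(\Sph)_X[2^k]\to B_k$; your ``$-1$ lifts'' is exactly the same mechanism, but you should phrase it as a map of sheaves $\Sigma\udl{\Z/2}\to\mathcal G$ (coming from $-1\in GL_1(\Sph)$, not from $\Gm(X)$ along a nonexistent ring map $\O_X\to\mathcal K$) so that \emph{every} element of $H^0_\et(X,\Z/2)$, not just the global constant, visibly lifts.

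Two small corrections: your appeal to \Cref{cor:catfinitary} is misplaced, since that result concerns $\Gpic$ rather than $\PPic$. What you actually need is much easier: for $j\geq 1$, $\pi_j\PPic(\Mod_{\mathcal K}(\Sh(U_\et,\Sp_{K(1)})))[p^\infty]$ depends only on $\End(\one)=L_{K(1)}K(U)$, which is finitary and satisfies Gabber rigidity, so the stalks are read off from $\KU_p$ directly. Also, the parenthetical ``available since $2\in\Gm(X)$'' is unnecessary: $-1$ is a unit in any ring.
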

In fact, when $p$ is an odd prime, we prove the following stronger statement: 
\begin{thm}\label{thm:injoddprimes}
    Let $X$ be a qcqs scheme such that $p\in\Gm(X)$. Assume that $p$ is odd.  The canonical map $$\BBr(X)[p^\infty]\to \PPic(\Mod_{\mathcal K}(\Sh(X_\et, \Sp_{K(1)})))[p^\infty]$$ factors through a full subspace of the target, on which it has a retraction.
\end{thm}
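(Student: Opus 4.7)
The plan is to adapt the sheaf-theoretic strategy from Theorem~\ref{thm:nostrictBrtext}, with the key strengthening that for odd $p$ the homotopy sheaf $\pi_1$ of the target vanishes, enabling a canonical Postnikov-type retraction. Set $\mathcal F := \BBr(-)[p^\infty]$ and $\mathcal G := \PPic(\Mod_{\mathcal K}(\Sh((-)_\et, \Sp_{K(1)})))[p^\infty]$; both are truncated étale hypersheaves on $X_\et$ by Lemma~\ref{lm:etcpct} and Example~\ref{ex:Picsub}. As the ``full subspace'' through which the map factors, I would take the identity component $\mathcal U := \Omega^\infty\tau_{\geq 1}\mathcal G$; the factorization through $\mathcal U$ is automatic since $\mathcal F \simeq B^2\mu_{p^\infty}$ is $2$-connective.

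The first main step is a stalk analysis. By Gabber rigidity and subfinitariness (Corollary~\ref{cor:catfinitary}), the stalk of $\mathcal G$ at a geometric point is $\PPic(\KU_p)[p^\infty]$. An explicit long-exact-sequence computation using $\pi_*\KU_p = \Z_p[\beta^{\pm 1}]$ shows that for $p$ odd this spectrum has $\pi_{2k} = \Z/p^\infty$ for $k \geq 0$ and $\pi_{2k+1} = 0$; crucially, for $p=2$ an additional $\mu_2$ would appear in $\pi_1$ (coming from $\mu_2 \subset \Z_2^\times$), which is the reason for restricting to odd $p$. In particular $\pi_1\tau_{\geq 1}\mathcal G$ vanishes stalkwise, hence as a hypersheaf.

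The second main step constructs the retraction. Since $\tau_{\geq 1}\mathcal G$ has $\pi_0 = \pi_1 = 0$ as a sheaf, the canonical Postnikov projection yields a natural map of hypersheaves of spectra $\tau_{\geq 1}\mathcal G \to \Sigma^2\pi_2\mathcal G$. Applying $\Omega^\infty$ and identifying $\pi_2\mathcal G \cong \mu_{p^\infty}$ via the Bott class of $\KU_p$ (whose $\Q_p/\Z_p$-quotient is exactly $\mu_{p^\infty}$ by Snaith/chromatic Fourier reasoning), we obtain a natural retraction candidate $\mathcal U \to B^2\mu_{p^\infty} \simeq \mathcal F$.

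The main obstacle will be the routine but nontrivial verification that the identification $\pi_2\mathcal G \cong \mu_{p^\infty}$ aligns precisely with the map $\mathcal F \to \mathcal G$ built from Construction~\ref{cons:pshmap} and Lemma~\ref{lm:cellularKunneth}, so that the composite $\mathcal F \to \mathcal U \to \mathcal F$ is the identity of $B^2\mu_{p^\infty}$ rather than some automorphism of $\mu_{p^\infty}$. I expect this to follow by chasing the image of a $p$-torsion Brauer class in $K(1)$-local $K$-theory: tensoring by the corresponding Azumaya algebra twists the unit by the Bott-period shift of the Brauer class, matching the $\pi_2$-identification used above. This last verification is the same kind of compatibility check that powered the proof of Theorem~\ref{thm:nostricttext} via \cite[Remark 8.18]{ChroNS}, and I would import the analogous statement here.
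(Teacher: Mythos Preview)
Your overall intuition—build a Postnikov-type retraction once you know $\pi_1$ of the target sheaf vanishes at odd primes—is sound, but the argument as written has a genuine gap in the stalk analysis, and the paper's route is different in an instructive way.

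\textbf{The gap.} You invoke Corollary~\ref{cor:catfinitary} to identify the stalk of $\mathcal G$ with $\PPic(\KU_p)[p^\infty]$. But that corollary is about the \emph{strict} Picard spectrum $\Gpic$, not $\PPic$; and the remark immediately following Theorem~\ref{thm:finitary} explicitly warns that $\PPic(L_{K(1)}(-))$ is \emph{not} subfinitary (the example there is exactly height~$1$). So the passage from the filtered colimit $\colim_U\mathcal G(U)$ to $\PPic(\KU_p)[p^\infty]$ is unjustified. Your citations for $\mathcal G$ being a hypersheaf are also off: Example~\ref{ex:Picsub} only gives separatedness (inclusion of components), and Lemma~\ref{lm:etcpct} concerns compact generation, not descent.

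\textbf{How the paper proceeds instead.} The paper does not try to control all of $\PPic$. It observes that the Brauer map factors through the \emph{locally trivial} Picard subspace $\mathbf L\PPic$, and uses the elementary identification $\mathbf L\PPic(\Mod_{\mathcal K}(\Sh(X_\et,\Sp_{K(1)})))\simeq \Gamma(X_\et,BGL_1(\mathcal K))$. Now one is working with the honest sheaf $BGL_1(\mathcal K)$, and the problem becomes constructing a retraction $BGL_1(\mathcal K)[p^\infty]\to B^2\mu_{p^\infty}$ of sheaves. Using \cite[Theorem~1.1]{CM} to replace $\mathcal K$ by $(K^\et)_p$ on connective covers, one passes to finite coefficients and computes $GL_1(K^\et/p^n)[p^k]_{\leq 1}$ from the known homotopy sheaves $\pi_0=\Z/p^n$, $\pi_1=0$, $\pi_2=\mu_{p^n}$. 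The odd-$p$ hypothesis enters precisely as $\lim_n\mu_{p^k}(\Z/p^n)=\mu_{p^k}(\Z_p)=0$, which kills the obstruction in $\pi_0$ and makes the truncation map a genuine retraction onto $B\mu_{p^k}$.

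\textbf{Relation between the two.} Your approach can in principle be repaired: for $k\geq 1$ one has $\pi_k\PPic(C)=\pi_{k-1}GL_1(\one_C)$, so the homotopy sheaves $\pi_{\geq 1}\mathcal G$ depend only on $\mathcal K$, not on the categorical Picard functor. Computing them amounts to analyzing $GL_1(\mathcal K)[p^\infty]$—which is exactly what the paper does after passing to $\mathbf L\PPic$. In other words, once the gap is fixed your argument collapses to the paper's, with $\mathbf L\PPic$ playing the role of your $\tau_{\geq 1}\mathcal G$ and the finite-coefficient trick replacing the direct stalk computation you attempted.
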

\begin{rmk}
    It is plausible that this stronger statement is also true at the prime $2$, but our current approach only gives the injectivity statement. 
\end{rmk}
The proof of the weaker version at the prime $2$ works essentially the same as that of the stronger one, except at the end where a complication appears and results in the weaker injectivity statement. 

\begin{proof}[Proof of \Cref{thm:injoddprimes}]
    By design, our map $$\Br(X)[p^\infty]\to \PPic(\Mod_{\mathcal K}(\Sh(X_\et, \Sp_{K(1)})))[p^\infty]$$ actually lands in the locally trivial Picard spectrum of the sheaf category, $\mathbf{L}\PPic$ (this is the full subspace spanned by those sheaves that are locally equivalent to $\mathcal K$). This is the subspace on which we will construct a retraction. While we are only working at odd primes for this proof, the reader is invited to pretend at the beginning that $p$ can be $2$, since the beginning of the proof of \Cref{thm:nostrictBrtext} will be the same at that prime. 
    
We construct our retraction as follows -- the construction is more elementary and closer in spirit to the one from \cite{companionMot}. 
We have an equivalence: 
$$\mathbf{L}\PPic(\Mod_{\mathcal K}(\Sh(X_\et,\Sp_{K(1)}))\simeq \Gamma(X_\et,BGL_1(\mathcal K)).$$ We will produce a map $BGL_1(\mathcal K)[p^\infty]\to B^2\mu_{p^\infty}$ which is a retraction for the map $B^2\mu_{p^\infty}\to BGL_1(\mathcal K)$, and from there we will be done.

For this, we note that by \cite[Theorem 1.1]{CM}, the map $(K^\et)_p\to L_{K(1)}K$ is an equivalence on connective covers. It follows that it suffices to do the same for $(K^\et)_p$ in place of $\mathcal K$, and clearly it suffices to construct compatible maps $GL_1((K^\et)_p)[p^k]\to B\mu_{p^k}$ which are $\mathbb E_1$-retractions for the canonical map $B\mu_{p^k}\to GL_1((K^\et)_p)[p^k]$. We do this in the lemma below. 
\end{proof} 
\begin{lm}\label{lm:retroddprimes}
    Let $p$ be an odd prime. The canonical map $$GL_1((K^\et)_p)[p^k]\simeq \lim_n GL_1(K^\et/p^n)[p^k]\to \lim_n(GL_1(K^\et/p^n)[p^k])_{\leq 1}$$ is a retraction of the map $B\mu_{p^k}\to GL_1(K^\et/p^n)[p^k]$, in particular the target is equivalent to $B\mu_{p^k}$.
\end{lm}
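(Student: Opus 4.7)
The plan is to compute $\lim_n (GL_1(K^\et/p^n)[p^k])_{\leq 1}$ directly on homotopy sheaves, identify it with $B\mu_{p^k}$, and check that the composite through $GL_1((K^\et)_p)[p^k]$ is the identity on $\pi_1$.

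First I would compute $\pi_0$ and $\pi_1$ of $GL_1(K^\et/p^n)[p^k]$ as \'etale sheaves. By Suslin--Gabber rigidity, the stalks of $K^\et/p^n$ are $\KU/p^n$, with $\pi_0 = \mathbb{Z}/p^n$, $\pi_1 = 0$ and $\pi_2 \cong \mathbb{Z}/p^n(1) = \mu_{p^n}$ via the Bott class. Since $\pi_i GL_1(R) = \pi_i R$ for $i \geq 1$ and $\pi_0 GL_1(R) = (\pi_0 R)^\times$, the fiber sequence $GL_1[p^k] \to GL_1 \xrightarrow{p^k} GL_1$ yields $\pi_0 GL_1(K^\et/p^n)[p^k] = (\mathbb{Z}/p^n)^\times[p^k]$ and $\pi_1 GL_1(K^\et/p^n)[p^k] = \mu_{p^n}/p^k \cong \mu_{p^k}$. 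For $p$ odd, $(\mathbb{Z}/p^n)^\times$ is cyclic of order $p^{n-1}(p-1)$, so its $p^k$-torsion is cyclic of order $p^{\min(k,n-1)}$, canonically identified with the image of $\mu_{p^k} \hookrightarrow \mathcal{O}^\times$.

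Next I analyze the transition maps in the inverse system $K^\et/p^{n+1} \to K^\et/p^n$. On $\pi_0$: an element $1 + p^{n+1-k}v$ of the $p^k$-torsion of $(\mathbb{Z}/p^{n+1})^\times$ reduces to $1 + p^{n-k}(pv) \bmod p^n$, so the transition is multiplication by $p$ on $\mathbb{Z}/p^k$; iterating $k$ times gives zero, so $\lim_n$ vanishes and $\lim^1_n = 0$ by Mittag--Leffler. On $\pi_1$: the transition $\mu_{p^{n+1}}/p^k \to \mu_{p^n}/p^k$ is the identity on $\mu_{p^k}$, compatible with the Bott class identification. Applying the Milnor short exact sequences for a $\lim$ of $1$-truncated spectra (using that $\pi_2$ of each term is zero), I conclude that $\lim_n (GL_1(K^\et/p^n)[p^k])_{\leq 1}$ has $\pi_0 = 0$ and $\pi_1 = \mu_{p^k}$, hence is equivalent to $B\mu_{p^k}$.

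Finally I check that the composite $B\mu_{p^k} \to GL_1((K^\et)_p)[p^k] \to \lim_n(\cdots)_{\leq 1} \simeq B\mu_{p^k}$ is the identity. Since both source and target are $1$-truncated with only $\pi_1 = \mu_{p^k}$, it suffices to check that the composite is the identity on $\pi_1$; this follows from the fact that the canonical map $B\mu_{p^k} \to GL_1(K^\et/p^n)[p^k]$ is built from the Bott identification $\mu_{p^n} \cong \pi_2 K^\et/p^n$, and the subsequent projections match $\mu_{p^k}$ with its image under $\mu_{p^n}/p^k \cong \mu_{p^k}$.

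The main obstacle is the transition computation on $\pi_0$: the multiplication-by-$p$ behaviour is the arithmetic fact that crucially requires $p$ odd, since for $p=2$ the extra $\{\pm 1\}$ factor in $(\mathbb{Z}/2^n)^\times$ gives a nonvanishing summand in $\lim_n \pi_0$, obstructing the identification of the target with $B\mu_{2^k}$. The weaker statement of \Cref{thm:nostrictBrtext} at $p=2$ would then rely only on the $\pi_1$-injectivity, which remains valid.
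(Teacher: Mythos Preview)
Your proposal is correct and follows essentially the same approach as the paper. Both compute $\pi_0$ and $\pi_1$ of $GL_1(K^\et/p^n)[p^k]$ as $\mu_{p^k}(\Z/p^n)$ and $\mu_{p^k}$ respectively, then show the inverse limit is $B\mu_{p^k}$ by arguing the $\pi_0$ contribution vanishes for $p$ odd; your explicit transition-map computation (multiplication by $p$, hence eventually zero) is simply an unpacking of the paper's one-line invocation of $\mu_{p^k}(\Z_p)=0$, and your Mittag--Leffler remark makes explicit what the paper absorbs by working in connective spectra.
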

\begin{proof}
    Recall that $\pi_0(K^\et/p^n)\cong \Z/p^n$ (a constant sheaf), $\pi_1(K^\et/p^n)=0$ and $\pi_2(K^\et/p^n)\cong \mu_{p^n}$ (see e.g. \cite[Corollary 6.12]{CM}). Thus $\pi_0(GL_1(K^\et/p^n)[p^k])\cong \mu_{p^k}(\Z/p^n)$ (as a constant sheaf) and $\pi_1(GL_1(K^\et/p^n)[p^k])\cong \mu_{p^n}/p^k \cong \mu_{p^k}$ for $n\geq k$. Thus we have a fiber sequence of sheaves of connective spectra/spaces $$B\mu_{p^k}\to GL_1(K^\et/p^n)[p^k]_{\leq 1}\to \mu_{p^k}(\Z/p^n)$$
    and the map $B\mu_{p^k}\to GL_1(K^\et/p^n)[p^k]_{\leq 1}$ is the canonical map (followed by the truncation map). 

    Taking limits over $n$, we are left with verifying that $\lim_n \underline{\mu_{p^k}(\Z/p^n)} = 0$ where for an abelian group $A$ we let $\underline{A}$ denote the corresponding constant étale sheaf. Note that here we are considering étale sheaves of \emph{connective} spectra, and since we are considering constant sheaves on abelian groups, we might as well consider étale sheaves of abelian groups (the inclusion of abelian groups into connective spectra preserves limits and filtered colimits). Now for an abelian group $A$, $H^0_\et(X,A) \cong C(|X|,A)$, continuous functions from the underlying topological space of $X$ into $A$. Therefore, the map $H^0_\et(X,\lim_n A_n)\to \lim_n H^0_\et(X,A_n)$ is injective for any inverse system of abelian groups $\{A_n\}$, and so the claim follows from $\mu_{p^k}(\Z_p)$ being $0$, which is true since $p$ is odd.
    \end{proof}
At the prime $2$, most of the above discussion works until the end: $\lim_n \udl{\mu_{2^k}(\Z/2^n)}$ is not zero, its global sections are $C^0(|X|,\mu_{2^k}(\Z_2))$, continuous functions from $|X|$ to $\mu_{2^k}(\Z_2)\cong \Z/2$, spanned by $\{\pm 1\}$. Thus, instead we find $\lim_n \udl{\mu_{2^k}(\Z/2^n)}\cong \udl{\Z/2}$.

We thus have to fight a bit more to prove \Cref{thm:nostrictBr} at $p=2$.
\begin{proof}[Proof of \Cref{thm:nostrictBrtext}]
    For odd primes, the result follows from \Cref{thm:injoddprimes}, so we fix $p=2$. We use ideas from the proof of \Cref{thm:injoddprimes} as well as \Cref{lm:retroddprimes}. 

    Let $B_k$ be $B(\lim_n (GL_1(K^\et/2^n)[2^k]_{\leq 1}))$. Recall that we have a sequence of maps $$B^2\mu_k\simeq \BBr(-)[2^k]\to \mathbf{L}\PPic(\Mod_{\mathcal K}(\Sh(X_\et,\Sp_{K(1)})))[2^k]\to B_k$$
    
    As in the analysis from \Cref{lm:retroddprimes}, $B_k$ is a $2$-truncated sheaf with $\pi_2\cong \mu_{2^k}$ and $\pi_1\cong \udl{\Z/2}$, and the composite map from $B^2\mu_{2^k}=\BBr(-)[2^k]$ induces an isomorphism on $\pi_2$. 

Taking $\pi_*$ of global sections of the fiber sequence $B^2\mu_{2^k}\to B_k\to B\Z/2$, we get an exact sequence of the form: $$H^0_\et(X,\Z/2)\to H^2_\et(X,\mu_{2^k})\to \pi_0(B_k(X))$$
To conclude, it thus suffices to verify that the map $H^0_\et(X,\Z/2)\to H^2_\et(X,\mu_{2^k})$ is $0$, or equivalently, to verify that $\pi_1(B_k(X))\to H^0_\et(X,\Z/2)$ is surjective. But in fact, every element in $H^0_\et(X,\Z/2)$ is in the image on $\pi_1$ of the map $BGL_1(\Sph)_X[2^k] \to B_k$, where $(-)_X$ indicates the constant étale sheaf on $X$, which proves the claim. 

\end{proof}

\section{Examples}\label{section:ex}
\subsection{Quaternions and Clifford algebras}\label{ex:quaternion}
In this example, we study the strict Picard group of $\KO$ at the prime $2$ using our results. 

On the one hand, we use our result to sketch a proof of the well-known fact (see e.g. \cite[Chapter VI, Table 3.1.1 and Theorem 3.2]{weibel})  that $L_{K(1)}K(\mathbb H) \simeq \Sigma^4\KO_2$ at the prime $2$, and in particular we produce a strict $2$-torsion structure on $\Sigma^4\KO_2$. We do this following an unpublished calculation of Ben Antieau, whom we thank for allowing us to reproduce some of the details here. As usual, all errors and inaccuracies are due to the author. 

On the other hand, after this is done, we use this calculation to obstruct the existence of a spectral lift of a certain invariant of quadratic forms.
\subsubsection{Calculation of $\Gpic$}
We work at the prime $p=2$ and consider $X=\Spec(\mathbb R)$. To simplify notation, we let $\overline{R}$ denote $R\otimes \Sph_{W(\overline{\mathbb F_2})}$ in what follows. 
\newcommand{\KKO}{\overline{\KO}}
\newcommand{\KKU}{\overline{\KU}}

 \Cref{thm:BrwithK} tells us that $$\BBr(\mathbb R)[2^\infty]\simeq \Gpic(\KKO_2)[2^\infty],$$

using that $K(\mathbb R)_2\simeq \mathrm{ko}_2$ by Suslin rigidity and so $L_{K(1)}K(\mathbb R)\simeq \KO_2$, and similarly, $$\BBr(\mathbb R)[2^\infty]^{S^1}\simeq \Gpic(\KO_2)[2^\infty]$$

We can therefore already read off the homotopy groups of both: 
\begin{cor}
    $$\Gpic(\KO_2)[2^\infty]\simeq \Z/2\oplus \Sigma\Z/2\oplus\Sigma^2\Z/2$$ and 
    $$\Gpic(\overline{\KO}_2)[2^\infty]\simeq \Z/2\oplus\Sigma^2\Z/2$$ and the map between them is an isomorphism on $\pi_0, \pi_2$. 
\end{cor}
Now, we would like to identify the relevant $\Gpic$'s without the $2$-power torsion, \emph{and} we would like to identify explicitly the $\pi_0$-classes. We begin with the $\pi_0$-classes: 
\begin{lm}\label{lm:pi0classisSigma4}
    The nonzero element in $\Gpic(\overline{\KO}_2)[2^\infty]$ (and hence in $\Gpic(\KO_2)[2^\infty]$) is a (unique up to equivalence) strict structure on $\Sigma^4\overline{\KO}_2$ (resp. $\Sigma^4\KO_2$). 
\end{lm}
\begin{proof}
    By Galois descent, we have $$\Gpic(\KKO_2)[2^\infty]\simeq~\tau_{\geq 0}(\Gpic(\KKU_2)[2^{\infty}]^{hC_2}),$$ compatibly with the forgetful maps to $\PPic$. By \cite[Theorem H]{ChroNS}, we have $\Gpic(\KKU_2)[2^\infty]\simeq~\Sigma^2\Z/2^\infty$, and the conjugation action is given by $-1$.

In particular, we see that our unique nontrivial element in $\pi_0(\BBr(\mathbb R)[2^\infty])\cong \Z/2$ is detected in the descent spectral sequence by the unique nonzero-class in $$H^2(BC_2,\pi_2\Gpic(\KKU_2)[2^\infty])\cong~H^2(BC_2, \Z/2^\infty)\cong~\Z/2.$$

Since $\pi_2\Gpic(\KKU_2)[2^\infty]\to\pi_2\PPic(\KKU_2)[2^\infty]\cong W(\overline{\mathbb F_2})\otimes\Z/2^\infty$ is injective on $H^2(BC_2,-)$, we can now try to analyze what this class corresponds to in $\pi_0\PPic(\KKO_2)[2^\infty]$. Luckily, this has been done by Gepner--Lawson in \cite{gepnerlawson}. Specifically, one sees in Figure 7.2 in \textit{loc. cit.} that the nonzero element of $H^3(BC_2,\pi_3\PPic(\KU))$ (which corresponds to our $H^2(BC_2,\pi_2\PPic(\KU)[2^\infty])$ and lives in Adams grading $(0,3)$ in that figure) survives and moreover is the ``top'' piece of a $3$-step filtration of $\Pic(\KO)\cong\Z/8$ by $\Z/2$'s, and hence it must be given by the element $4\in\Z/8$, corresponding to $\Sigma^4\KO$. 

Of course, they do this for $\KO$, but the same fact follows directly for $\KO_2$ and also $\KKO_2$, and this proves the claim.

\end{proof}
\begin{cor}
    We have equivalences 
    $L_{K(1)}K(\mathbb H)\otimes\Sph_{W(\overline{\mathbb F_2})}\simeq \Sigma^4\overline{\KKO}_2$ and $L_{K(1)}K(\mathbb H)\simeq\Sigma^4\KO_2$
\end{cor}

In total, this discussion shows: 
\begin{cor}\label{cor:GpicKO}
    $\Gpic(\KO_2)\cong\Z/2\oplus \Sigma\Z/2\oplus\Sigma^2\Z/2$, with $\pi_0$ generated by a unique strict structure on $\Sigma^4\KO_2$. 

    Similarly, $\Gpic(\overline{\KO}_2)\simeq \Z/2\oplus\Sigma\overline{\mathbb F_2}^\times \oplus \Sigma^2\Z/2$, and the map between them is an isomorphism on $\pi_0$ and $\pi_2$
\end{cor}
\begin{proof}
The abstract homotopy group calculations are direct group cohomology calculations using \cite[Theorem 8.17]{ChroNS} which computes $\Gpic(\overline{\KU}_2)$, using Hilbert's Theorem 90 to compute the $\Z$-fixed points of the Galois action on $\overline{\mathbb F}_2^\times$. Since $\Gpic$ is an Eilenberg--MacLane spectrum, this also describes the result as specta. 

The only thing to add is that \Cref{lm:pi0classisSigma4} shows that the generator of $\pi_0$ is indeed given by $\Sigma^4\KO_2$ (resp. $\Sigma^4\overline{\KO}_2$) as opposed to an abstract cohomology class. 
\end{proof}
\subsubsection{Clifford algebras}
We now exploit our results to obstruct the existence of a spectrum level ``Clifford algebra'' map from the $L$-theory of $\mathbb R$ to its Brauer--Wall spectrum. 

Recall the following construction - here, super vector space means $\Z/2$-graded vector space and super algebra means algebra in super vector spaces. Here, the tensor product is the usual graded tensor product with the Koszul sign rule for the symmetry isomorphism\footnote{The symmetry isomorphism is irrelevant for the notion of algebra, but it is relevant for the tensor product of algebras!}. 
\begin{cons}\label{cons:Cliff}
    Let $(V,q)$ be a quadratic form over the real numbers, and let $\mathrm{Cl}(V,q)$ be the free associative superalgebra on $V$ in degree $1$, modulo the relation $v^2= q(v)$ for all $v\in V$. 

    The construction $(V,q)\mapsto \mathrm{Cl}(V,q)$ is a symmetric monoidal functor on the groupoid $\mathrm{Quad}(\mathbb R)$ of quadratic forms equipped with the (orthogonal) direct sum symmetric monoidal structure, and the tensor product of algebras in the target. 

    Furthermore, $\mathrm{Cl}(V,q)$ is always an Azumaya algebra in super vector spaces and thus we obtain, by group completion, a map of connective spectra $\mathrm{GW}^s(\mathbb R)\to \mathbf{BW}(\mathbb R)$, where the source is\footnote{symmetric, to fix things, but this is irrelevant since $2\in\mathbb R^\times$.} the connective Grothendieck-Witt theory of $\mathbb R$, and the target is the Brauer--Wall spectrum of $\mathbb R$, defined as the Brauer spectrum of the category of super vector spaces. 
\end{cons}
The following is an easy observation, a special case of the more general result from \cite{knus1991clifford}: 
\begin{obs}
    Let $V$ be a real vector space, and consider the quadratic form $(V\oplus V^\vee, q)$ where $q(v,f) = f(v)$. 

    The super Azumaya algebra $\mathrm{Cl}(V\oplus V^\vee)$ is Morita equivalent to the unit. Indeed, the $\mathrm{Cl}(-)$ construction is symmetric monoidal and so one reduces to $V=\mathbb R$ where one verifies directly that $\mathrm{Cl}(\mathbb R\oplus\mathbb R^\vee) \cong \End(\mathbb R\oplus\mathbb R[1])$. 
\end{obs}
\begin{rmk}
    In fact, in general, $\mathrm{Cl}(V\oplus V^\vee)\cong \End(\Lambda^* V)$, cf. \cite[Definition 1.5]{karoubi2019real}. 
\end{rmk}
In particular, it follows that the \emph{after applying} $\pi_0$, the map of connective spectra $\mathrm{GW}^s(\mathbb R)\to \mathbf{BW}(\mathbb R)$ factors, through $\pi_0L(\mathbb R)$, aka the Witt group $W(\mathbb R)$. This was observed already by Wall when he introduced the Brauer--Wall group, cf. \cite{wall1964graded}. It is thus reasonable to ask whether this factorization also exists at the level of spectra, i.e. whether $\mathrm{GW}^s(\mathbb R)\to \mathbf{BW}(\mathbb R)$ factors through $L(\mathbb R)$. We now explain why this cannot be the case. First, we note that such a factorization would provide us with many strict elements in the Brauer--Wall spectrum: 
\begin{lm}
The $2$-localization $L(\mathbb R)_{(2)}$ is an Eilenberg--MacLane spectrum, and hence, any map $L(\mathbb R)\to \mathbf{BW}(\mathbb R)$ lifts to a map to $\Map(\mathbb Z,\mathbf{BW}(\mathbb R))$. 
\end{lm}
\begin{proof}
    The first part is \cite[Proposition 6.3]{land2018relation}. The second part follows since $\mathbf{BW}(\mathbb R)$ is $2$-local: its homotopy groups are $\Z/8$ in degree $0$, $\Z/2$ in degree $1$ and $\mathbb R^\times\cong \Z/2\times \mathbb R$ in degree $2$. 
\end{proof}
\begin{cor}\label{cor:iffactthenstr}
    If the map $\mathrm{GW}^s(\mathbb R)\to \mathbf{BW}(\mathbb R)$ factors through $L(\mathbb R)$, then every element in $\mathbf{BW}(\mathbb R)$ admits a strict structure.

    More generally, this is so if the map $\pi_0L(\mathbb R)\to \pi_0\mathbf{BW}(\mathbb R)$ admits a lift to a map of spectra. 
\end{cor}
\begin{proof}
    Indeed, the $\pi_0$-map in question is surjective by the standard calculation of $\mathrm{BW}(\mathbb R)$. 

    Therefore, the claim follows from the previous lemma. 
\end{proof}

Now there is a symmetric monoidal functor from super vector spaces to $\mathbb R[u^{\pm 1}]$-modules, where $|u|=2$. We therefore obtain a map of spectra $\mathbf{BW}(\mathbb R)\to \BBr(\mathbb R[u^{\pm 1}])$ (in fact, this map is an isomorphism on $2$-truncations but we will not need this). 

Note that $\mathrm{BW}(\mathbb R)\cong \mathbb Z/8$, whereas $\mathrm{BW}(\mathbb C)\cong \mathbb Z/2$, and the map from the former to the latter is surjective, and thus it kills $\mathbb Z/4\subset \mathbb Z/8$. We thus obtain: 
\begin{prop}\label{prop:superK}
    Let $A$ be a super Azumaya algebra representative of any element of $\mathbb Z/4\subset \mathbb Z/8\cong \mathrm{BW}(\mathbb R)$. 

    For any $\mathbb R[u^{\pm 1}]$-linear category $C$, we have a Künneth formula in $K(1)$-local $K$-theory at the prime $2$: $K(A)\otimes_{K(\mathbb R[u^{\pm 1}])}K(C)\to K(A\otimes_{\mathbb R[u^{\pm 1}]}C)$ is a $K(1)$-local equivalence. 

    Therefore, on the union of connected components $\widetilde{\mathbf{BW}}(\mathbb R)$ corresponding to $\Z/4$, $K(1)$-local $K$-theory defines a map of spectra $\widetilde{\mathbf{BW}}(\mathbb R)\to \PPic(L_{K(1)}K(\mathbb R[u^{\pm 1}]))$.  
\end{prop}
\begin{proof}
    By the discussion preceding the proposition, any such $A$ is trivialized by the $C_2$-Galois extension $\mathbb C[u^{\pm 1}]/\mathbb R[u^{\pm 1}]$, and thus the claim follows from (the same proof as in) \Cref{lm:GaloisKünneth}. 
\end{proof}

The idea now is that topological $K$-theory takes the $\Z/8$ appearing in the Brauer--Wall group of $\mathbb R$ to the $\Z/8$ in $\Pic(\KO)$. Therefore, the above proposition seems to produce a strict structure on the generator of $\Z/4$, namely $\Sigma^2\KO$. But our calculation of $\Gpic(\KO_2)$ shows that there is no such thing. We could be done here and there by actually using topological $K$-theory of graded $C^*$-algebras, but instead we go a slightly more complicated route to stick to $K(1)$-local $K$-theory. 

Thus, let us now make the above idea precise in that context. 

\begin{obs}
    The quaternion algebra $\mathbb H$, viewed as a superalgebra in grading $0$, is the element $4\in\mathbb Z/8\cong \mathrm{BW}(\mathbb R)$.

As an element in $\BBr(\mathbb R[u^{\pm 1}])$, it is basechanged from $\mathbb R$. 
\end{obs}
Our goal is now to observe that, at least after basechange to $\Sph_{W(\overline{\mathbb F}_2)}$, the map $K(\mathbb R)\to K(\mathbb R[u^{\pm 1}])$ has a $K(1)$-local splitting, which will produce a direct contradiction since in $\mathbb Z/4\subset \mathrm{BW}(\mathbb R)$, $\mathbb H$ is divisible by $2$, but in $\Gpic(\overline{\KO}_2)$, $L_{K(1)}K(\mathbb H)\otimes\Sph_{W(\overline{\mathbb F}_2)}$ is not. 

 To explain this $K(1)$-local splitting, we first recall: 
\begin{cons}\label{cons:strSigma}
Since $\mathbb Z\cong \pi_0\PPic(\mathbb Z)\to\pi_1\PPic(\mathbb Z)\cong \mathbb Z^\times$ is the unique surjection, \cite[Proposition 3.23]{Cyclochro} shows that $\Sigma^2\mathbb Z$ has a canonical strict structure. It thus induces specific strict structure on $1=[\Sigma^2\mathbb Z]\in K(\mathbb Z)$. 
\end{cons}

\begin{prop}\label{prop:isthom}
    After completion at any prime, $K(\mathbb R[u^{\pm 1}])$ is an $\mathbb E_\infty$-Thom spectrum over $K(\mathbb R)$ associated with the map $B\mathbb Z\to BGL_1(K(\mathbb R))\subset \PPic(K(\mathbb R))$ given by the above construction. 
\end{prop}
\begin{proof}
    Before taking $K$-theory, this is the case at the category level by \cite[Theorem 7.13]{CCRY}: $\Perf(\mathbb R[u^{\pm 1}])$ is a Thom object over $\Perf(\mathbb R)$ associated to the $\mathbb E_\infty$ map $B\mathbb Z\to B\PPic(\mathbb R)$ classifying $\Sigma^2\mathbb R$ as a strict picard object. 

    Thus, after applying $K$-theory we obtain a canonical comparison of $\mathbb E_\infty-K(\mathbb R)$-algebras $$\colim_{B\mathbb Z}K(\mathbb R)\to K(\mathbb R[u^{\pm 1}])$$ from the Thom spectrum. One can then ask whether this is an equivalence, and for this one may simply forget the multiplicative structure. In fact, it is not an equivalence, but becomes so after $p$-completing for any $p$. 

    This can be proved for example by using the fiber sequence $K(\mathbb R)\to K(\mathbb R[u])\to  K(\mathbb R[u^{\pm 1}])$ coming from the theorem of the heart, and using Goodwillie's theorem to prove that $K(\mathbb R[u])\to K(\mathbb R)$ is a mod $p$ equivalence. 
\end{proof}
It is thus an interesting question to know whether this strict structure on $[\Sigma^2 \mathbb R]$ is trivial, since that would make the relevant Thom object an ordinary group ring. We claim that at least its image in $\Gm(\overline{\KO}_2)$ is trivial. 
\begin{lm}
$\Gm(\overline{\KO}_2)\simeq \overline{\mathbb F}_2^\times \oplus \Sigma\mathbb Z/2 $ and the map to $GL_1$ is given by the multiplicative lifts $\overline{\mathbb F}_2^\times\to W(\overline{\mathbb F}_2)^\times$ on $\pi_0$, and in particular is injective on $\pi_0$. 
\end{lm}
\begin{proof}
    This follows from \Cref{cor:GpicKO}. 
\end{proof}

\begin{cor}\label{cor:strtriv}
    The image of the strict element $[\Sigma^2\mathbb Z]\in \Gm(K(\mathbb Z))$ from \Cref{cons:strSigma} is trivial in $\Gm(\overline{\KO}_2)$. 
\end{cor}
\begin{proof}
    The previous Lemma shows that $\Gm(\overline{\KO}_2)\to GL_1(\overline{\KO}_2)$ is injective on $\pi_0$. Since $[\Sigma^2\Z]\in GL_1(K(\mathbb Z))$ is already equivalent to $1$, the claim follows. 
\end{proof}
The following result is probably not optimal, but suffices for us:
\begin{cor}
    After $2$-completion and tensoring with $\Sph_{W(\overline{\mathbb F}_2)}$, the map $K(\mathbb R)\to K(\mathbb R[u^{\pm 1}])$ witnesses the target as a group ring over the source, and in particular admits an $\mathbb E_\infty$-splitting. 
\end{cor}
\begin{proof}
    This follows by combining \Cref{prop:isthom} and \Cref{cor:strtriv}. 
\end{proof}
\begin{cor}\label{cor:nofact}
    The map $W(\mathbb R)\to \mathrm{BW}(\mathbb R)$ does not lift to a map of spectra $L(\mathbb R)\to \mathbf{BW}(\mathbb{R})$. 
\end{cor}
\begin{proof}
By \Cref{prop:superK}, the map $A\mapsto L_{K(1)}K(A)$ at the prime $2$ defines a map of spectra from $\widetilde{\mathbf{BW}}(\mathbb R)$ to $\PPic(L_{K(1)}K(\mathbb R[u^{\pm 1}]))$, where the source is the union of components of $\mathbf{BW}(\mathbb R)$ corresponding to $\mathbb Z/4\subset \mathrm{BW}(\mathbb R)$. 

 Fixing any $\mathbb E_\infty$-splitting of the map $K(\mathbb R)\to K(\mathbb R[u^{\pm 1}])$ after $K(1)$-localizing and tensoring with $\Sph_{W(\overline{\mathbb F}_2)}$, we get a map $\widetilde{\mathbf{BW}}(\mathbb R)\to \PPic(\overline{\KO}_2)$ such that the composite $\BBr(\mathbb R)\to \widetilde{\mathbf{BW}}(\mathbb R)\to \PPic(\overline{\KO}_2)$ is our standard map. 

 Now consider the element $1\in\mathbb Z/4\subset \mathrm{BW}(\mathbb R)$. On the one hand, it doubles to the class of $\mathbb H$ and so is sent to $2\in\mathbb Z/8\subset \Pic(\overline{\KO_2})$. 
 
On the other hand, if there exists a lift to a map of spectra of the Clifford algebra assignment, then, by \Cref{cor:iffactthenstr}, this element admits a strict structure, and therefore so does $\Sigma^2\overline{\KO}_2$. By \Cref{cor:GpicKO}, this is not the case, and so there exists no such factorization. 
\end{proof}
\subsection{Number rings}\label{section:numberring}
In this section, we use our results together with the Lichtenbaum-Quillen conjecture (a theorem of Voevodsky and Rost) to access $\Gm(K(R))[p^\infty]$ for certain commutative rings $R$, namely those  ``weakly of Lichtenbaum-Quillen dimension $\leq 2$''.

\begin{defn}
    Let $X$ be a qcqs scheme. We say $X$ is weakly of Lichtenbaum-Quillen (LQ) dimension $\leq d$ at the prime $p$ if the cofiber of the map $K(X)_p\to L_{K(1)}K(X)$ is in degrees $\leq d-2$. 
\end{defn}
\begin{rmk}
    In \cite{EldenLQ}, the authors introduce a related notion of Lichtenbaum-Quillen dimension, and the above definition was stolen from theirs. Their notion differs from ours in two ways: first, they look at all primes at the same time; and second, they look at a more refined version of our condition (which is why we appended the word ``weakly'') since instead of comparing directly $K(X)_p \to L_{K(1)}K(X) = L_{T(1)}K(X)$, they require explicit connectivity estimates for the Bott map $\Sigma^2 K(X)/p^r\to K(X)/p^r$ which one inverts to obtain $L_{T(1)}K(X)/p^r$.
\end{rmk}
\begin{ex}\label{ex:ringsofintegers}
By class field theory, number fields are of virtual mod $p$ cohomological dimension $\leq 2$ for all primes $p$, and finite fields are of mod $p$ cohomological dimension $1$ for all primes $p$. Combining this with \cite[Theorems 1.1 and 1.2]{CM} we find that if $X$ is the spectrum of a ring of $S$-integers $\O_F[\frac{1}{p}, p\in S]$ for some number field $F$, or the spectrum of a finite field, then $X$ is weakly of LQ dimension $\leq 2$ at all primes $p$ invertible in $X$. 
\end{ex}
One can of course make this definition for a stable category as well, but beyond those coming from algebraic geometry, it does not seem like we have access to anything like a Lichtenbaum-Quillen conjecture, and thus to an understandable LQ dimension (see also \cite[Section 1.2]{EldenLQ}). 

\begin{lm}
    Let $X$ be a qcqs scheme in which $p$ is invertible, and weakly of LQ dimension $\leq 2$ at the prime $p$. In this case the following two maps are equivalences: $$\PPic(X)[p^\infty]\to \Gm(K(X)_p\otimes\Sph_{W(\overline{\mathbb F}_p)})[p^\infty]\to \Gm(L_{K(1)}K(X)\otimes\Sph_{W(\overline{\mathbb F}_p)})[p^\infty]$$
\end{lm}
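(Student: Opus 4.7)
The plan is to leverage the corollary of \Cref{thm:generalblahG}, which already identifies the composite
$$\PPic(X)[p^\infty]\xrightarrow{\alpha}\Gm(K(X)_p\otimes\Sph_{W(\overline{\mathbb F_p})})[p^\infty]\xrightarrow{\beta}\Gm(L_{K(1)}K(X)\otimes\Sph_{W(\overline{\mathbb F_p})})[p^\infty]$$
as an equivalence. By two-out-of-three, it will then suffice to show that $\beta$ is an equivalence, and $\alpha$ will follow automatically. In fact I will show that $\beta$ is injective on every $\pi_i$; since the composite is an equivalence, this forces $\beta$ to also be surjective on $\pi_*$ and hence be an equivalence.

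The weak LQ hypothesis unpacks as the statement that the cofiber $C$ of the comparison $K(X)_p\to L_{K(1)}K(X)$ satisfies $\pi_iC=0$ for $i\geq 1$. Tensoring with $\Sph_{W(\overline{\mathbb F_p})}$ (which, as a $p$-complete spherical Witt ring, is a filtered colimit of finite free $\Sph_p$-modules) preserves this connectivity bound, so the cofiber of the tensored map is again in degrees $\leq 0$. The long exact sequence in homotopy then gives that the induced map $\pi_i(K(X)_p\otimes\Sph_{W(\overline{\mathbb F_p})})\to\pi_i(L_{K(1)}K(X)\otimes\Sph_{W(\overline{\mathbb F_p})})$ is an isomorphism for $i\geq 1$ and an injection on $\pi_0$.

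Passing this to $\Gm(-)[p^\infty]$, I use that for any $\mathbb E_\infty$-ring $R$ one has $\pi_i\Gm(R)\cong\pi_iR$ in degrees $i\geq 1$ (from $\Map_{\Sp^{\mathrm{cn}}}(\Z,-)$ acting as the identity on 1-connective spectra, applied to the connective cover of $\mathrm{gl}_1R$), so $\pi_i\Gm(R)[p^\infty]\cong\pi_i(R)[p^\infty]$ for $i\geq 1$. In degree zero, $\pi_0\Gm(R)[p^\infty]$ sits inside $\mu_{p^\infty}(\pi_0R)$, so an injection on $\pi_0$ of the underlying ring spectra yields an injection on this torsion subgroup. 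Together these give $\beta$ injective on all homotopy groups, completing the proof.

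The main obstacle is the careful handling of $\pi_0$ on the non-connective side $L_{K(1)}K(X)\otimes\Sph_{W(\overline{\mathbb F_p})}$: one must verify that $\pi_0\Gm(-)[p^\infty]$ really depends injectively on $\pi_0$ of the ring, without ``extra'' strict $p$-power torsion units being produced by the negative homotopy. Since we are only asking for injectivity on $\pi_0$, and since the composite equivalence guarantees that every element in the target already has a natural source in $\PPic(X)[p^\infty]$ — and hence lifts through the intermediate stage — this should not require a fully general study of strict units for non-connective rings.
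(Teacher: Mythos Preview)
Your overall strategy is exactly the paper's: the composite is an equivalence by the corollary of \Cref{thm:generalblahG}, so it suffices to show that $\beta$ is an ``inclusion of components'' (iso on $\pi_{\geq 1}$, mono on $\pi_0$), and then two-out-of-three finishes. The paper records this tersely as ``taking $\Gm(-)[p^\infty]$ remains an inclusion of components'', and your attempt to make this explicit is where problems creep in.

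Two of your intermediate claims are false. First, $\Map_{\Sp^{\mathrm{cn}}}(\Z,-)$ does \emph{not} act as the identity on $1$-connective spectra: the cofiber of $\Sph\to\Z$ is $\Sigma\tau_{\geq 1}\Sph$, and for instance $\Map(\Z,\Sigma H\Z/2)\not\simeq\Sigma H\Z/2$ since $[\tau_{\geq 1}\Sph,\Sigma H\Z/2]\neq 0$. So the asserted identification $\pi_i\Gm(R)\cong\pi_i R$ for $i\geq 1$ is unjustified. Second, $\pi_0\bigl(\Gm(R)[p^\infty]\bigr)$ does not sit inside $\mu_{p^\infty}(\pi_0 R)$: from the long exact sequence for the fiber of $E\to E[1/p]$, one finds that $\pi_0(E[p^\infty])$ is an extension of $(\pi_0 E)[p^\infty]$ by $\pi_1(E)\otimes\Z/p^\infty$, so there is an extra piece you have not accounted for.

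The conclusion you want is nonetheless correct, and the clean way to see it (implicit in the paper) avoids computing individual homotopy groups. The ring map has fiber in degrees $\leq -1$; this passes to $GL_1$. Applying $\Map(\Z,-)$ preserves having coconnective fiber, since $\Z$ is connective and mapping a connective object into something in degrees $\leq -1$ stays in degrees $\leq -1$. For $(-)[p^\infty]$, use the five-lemma on the short exact sequences $0\to\pi_{i+1}(E)\otimes\Z/p^\infty\to\pi_i(E[p^\infty])\to(\pi_i E)[p^\infty]\to 0$: since $\pi_i$ and $\pi_{i+1}$ of the source and target agree for $i\geq 1$, and $\pi_0$ injects while $\pi_1$ is an isomorphism, $\beta$ is indeed an inclusion of components. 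Your final paragraph about negative homotopy is a red herring: $GL_1$ only depends on the connective cover, so non-connectivity of $L_{K(1)}K(X)$ plays no role.
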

\begin{proof}
    We already know that the total composite is an equivalence by \Cref{thm:BrwithK}, so it suffices to observe that $K(X)_p\otimes\Sph_{W(\overline{\mathbb F}_p)}\to L_{K(1)}K(X)\otimes\Sph_{W(\overline{\mathbb F}_p)}$ is an inclusion of components because this is the case by assumption without $\Sph_{W(\overline{\mathbb F}_p)}$, and the latter is $p$-completely a direct sum of copies of $\Sph_p$. Now taking $\Gm(-)[p^\infty]$ remains an inclusion of components, and since it factors an equivalence, it must be an equivalence too. 
\end{proof}
\begin{cor}\label{cor:GmKp}
     Let $X$ be a qcqs scheme in which $p$ is invertible, and weakly of LQ dimension $\leq 2$ at the prime $p$. There is an equivalence $$\Gm(K(X)_p)[p^\infty]\simeq \PPic(X)[p^\infty]^{S^1}$$
\end{cor}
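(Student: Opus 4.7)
The plan is to extract this corollary from the previous lemma by exploiting the $\mathbb Z$-equivariant ``pro-Galois'' descent along the spherical Witt extension $\Sph_p\to \Sph_{W(\overline{\mathbb F_p})}$ described in \Cref{rmk:ZactSphW}. The slogan is that the previous lemma identifies $\Gm(K(X)_p\otimes\Sph_{W(\overline{\mathbb F_p})})[p^\infty]$ with a purely $X$-theoretic object, and the Frobenius descent then recovers the $\Sph_{W(\overline{\mathbb F_p})}$-free statement by taking $h\mathbb Z$-fixed points.

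First I would observe that the equivalence
\[
\PPic(X)[p^\infty]\xrightarrow{\simeq} \Gm(K(X)_p\otimes\Sph_{W(\overline{\mathbb F_p})})[p^\infty]
\]
provided by the previous lemma is canonically $\mathbb Z$-equivariant, where $\mathbb Z$ acts trivially on the source and acts on the target through the locally unipotent Frobenius action on $\Sph_{W(\overline{\mathbb F_p})}$. Indeed, this map is obtained by applying the functor $\Gm(-)[p^\infty]$ to the unit map $K(X)_p\to K(X)_p\otimes\Sph_{W(\overline{\mathbb F_p})}$, which is visibly $\mathbb Z$-equivariant for the trivial action on its source.

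Second, I would take $h\mathbb Z$-fixed points on both sides. On the right-hand side, the descent equivalence recalled in \Cref{rmk:ZactSphW} gives a $p$-complete identification of symmetric monoidal module categories, and since $\PPic$ preserves limits of symmetric monoidal categories and $\Gm(-)[p^\infty]$ is a composite of $\Omega$, truncation and a fiber (all of which preserve limits), I would conclude
\[
\bigl(\Gm(K(X)_p\otimes\Sph_{W(\overline{\mathbb F_p})})[p^\infty]\bigr)^{h\mathbb Z}\simeq \Gm(K(X)_p)[p^\infty].
\]
On the left-hand side, the action is trivial, so
\[
\PPic(X)[p^\infty]^{h\mathbb Z}\simeq \PPic(X)[p^\infty]^{B\mathbb Z}\simeq \PPic(X)[p^\infty]^{S^1},
\]
which, using $\Sigma^\infty_+ S^1\simeq \Sph\oplus \Sigma\Sph$, is just $\PPic(X)[p^\infty]\oplus \Gm(X)[p^\infty]$ (matching the statement of \Cref{cor:GmLQ} in the introduction).

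The main technical point is the rigorous justification that $\Gm(-)[p^\infty]$ genuinely carries the Frobenius descent of $K(X)_p\otimes\Sph_{W(\overline{\mathbb F_p})}$ to a limit diagram, i.e.\ that the descent equivalence at the level of $p$-complete modules really does give the displayed equivalence above. However, this is essentially built into the formulation of \Cref{rmk:ZactSphW} that we are treating as a black box, so no serious new obstacle arises beyond unpacking it. Once this packaging is in place, the argument is a formal fixed-point manipulation.
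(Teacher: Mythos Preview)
Your proposal is correct and follows essentially the same approach as the paper's proof: observe that the equivalence from the previous lemma is $\Z$-equivariant for the trivial action on the source and the Frobenius action on the target, then take $\Z$-fixed points and invoke \Cref{rmk:ZactSphW}. One small inaccuracy: $\Gm$ is not built from a truncation but from $\Map(\Z,-)$ applied to $\PPic$ followed by $\Omega$; this is harmless since $\Map(\Z,-)$ also preserves limits, so your conclusion stands.
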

\begin{proof}
    The map $\PPic(X)[p^\infty]\to \Gm(K(X)_p\otimes \Sph_{W(\overline{\mathbb F}_p)})[p^\infty]$ is clearly $\widehat{\Z}$-equivariant for the trivial action on the source, and the Galois action on the target. Taking $\Z$-fixed points, using \Cref{rmk:ZactSphW}, gives the result.
\end{proof}
\begin{lm}\label{lm:primetop}
    Let $R$ be a commutative ring spectrum. The forgetful maps $\Gm(R[\frac{1}{p}])[p^\infty]\to GL_1(R[\frac{1}{p}])[p^\infty]\to GL_1(\pi_0(R)[\frac{1}{p}])[p^\infty]$ are equivalences. 
\end{lm}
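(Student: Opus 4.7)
Set $R':=R[\frac{1}{p}]$, so that $p$ acts invertibly on $\pi_*R'$. The key mechanism is elementary: if $p$ acts invertibly on a connective spectrum $E$, then $E\to E[\frac{1}{p}]$ is an equivalence and hence $E[p^\infty]=0$. Moreover, the mapping spectrum $\Map_{\Sp^{\mathrm{cn}}}(\Z,E)$ inherits a $\Sph[\frac{1}{p}]$-module structure whenever $E$ does, so the same vanishing also applies to $\Map_{\Sp^{\mathrm{cn}}}(\Z,E)[p^\infty]$ under the same hypothesis.

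The plan is to apply this to the Postnikov fiber sequence of connective spectra
\[\tau_{\geq 1}(R')\to GL_1(R')\to \pi_0(R')^\times,\]
where I use the standard identification $\tau_{\geq 1}GL_1(R')\simeq\tau_{\geq 1}R'$, coming from the fact that $GL_1(R')\hookrightarrow\Omega^\infty R'$ is an isomorphism on positive homotopy groups. Since the leftmost term has $p$ invertible, and $[p^\infty]$ preserves fiber sequences of connective spectra (being a functorial fiber), I obtain an equivalence $GL_1(R')[p^\infty]\simeq\pi_0(R')^\times[p^\infty]=GL_1(\pi_0R')[p^\infty]$, which handles the second forgetful map.

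For the first, I would apply the limit-preserving functor $\Map_{\Sp^{\mathrm{cn}}}(\Z,-)$ to the same fiber sequence and then $[p^\infty]$. The leftmost term again vanishes by the central observation, while the rightmost term collapses to $\pi_0(R')^\times$ itself via the elementary identification $\Map_{\Sp^{\mathrm{cn}}}(\Z,A)\simeq A$ for any abelian group $A$ (viewed as a connective spectrum): its $\pi_0$ is $\Hom_\Z(\Z,A)=A$, while higher $\pi_i$ vanish since $\Sigma^i\Z$ is $i$-connected for $i\geq 1$ whereas $A$ is $0$-truncated. Hence $\Gm(R')[p^\infty]\simeq\pi_0(R')^\times[p^\infty]$, and naturality of the Postnikov truncation, combined with the tautology that for a discrete ring the forgetful map $\Gm\to GL_1$ is the identity, assembles everything into a commutative square in which the two ``truncation'' verticals are equivalences, forcing both forgetful maps in the statement to be equivalences as well. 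No serious obstacle is anticipated: the argument boils down to the two elementary observations above plus standard functoriality of mapping spectra and fiber sequences.
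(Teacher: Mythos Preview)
Your proof is correct and follows essentially the same approach as the paper's. Both arguments hinge on the same two observations: the fiber $\tau_{\geq 1}GL_1(R')\simeq\tau_{\geq 1}R'$ has $p$ invertible and hence dies under $[p^\infty]$, and a discrete abelian group is fixed by $\Map_{\Sp^{\mathrm{cn}}}(\Z,-)$; the paper simply states these two facts in two sentences while you unpack the Postnikov fiber sequence explicitly. One minor caution: the parenthetical ``$[p^\infty]$ preserves fiber sequences of connective spectra (being a functorial fiber)'' is not literally true in general, since the fiber in $\Sp_{\geq 0}$ is $\tau_{\geq 0}$ of the spectral fiber; however, in your situation the spectral $[p^\infty]$ of the leftmost term is already zero (not merely after truncation), so the conclusion $GL_1(R')[p^\infty]\simeq\pi_0(R')^\times[p^\infty]$ follows anyway.
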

\begin{proof}
    $p$ is invertible in the fiber of $GL_1(R[\frac{1}{p}])\to GL_1(\pi_0(R)[\frac{1}{p}])$, so that mapping in from $\Z/p^n$ produces an equivalence for every $n$, and thus also in the colimit. This proves the result about the second map. But now the target of the second map is in degree $0$, i.e. an ordinary abelian group, so that taking the (connective!) mapping spectrum from $\Z$ leaves it unchanged, which proves the statement. 
\end{proof}

Our main example is then as follows: 
\begin{cor}\label{cor:GmLQ}
     Let $X$ be a noetherian scheme of Krull dimension $\leq 1$ in which $p$ is invertible. Suppose that the underived Picard group $\Pic^\heartsuit(X)$ is finite and $X$ is weakly of LQ dimension $\leq 2$ at the prime $p$. Suppose finally that $K_{-1}(X)=0$.

     In this case, there is an equivalence $$\Gm(K(X))[p^\infty]\simeq \PPic(X)[p^\infty]\oplus \Gm(X)[p^\infty]$$
\end{cor}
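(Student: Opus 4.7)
The plan is to deduce the corollary from \Cref{cor:GmKp} via the arithmetic fracture square for $K(X)$. First, I observe that $K(X)$ is connective: the Kerz--Strunk--Tamme resolution of Weibel's vanishing conjecture gives $K_{-n}(X)=0$ for $n > \dim X$, and combined with $\dim X \leq 1$ and the hypothesis $K_{-1}(X) = 0$, all negative $K$-groups vanish. Thus the arithmetic fracture square
\[
K(X) \simeq K(X)_p \times_{K(X)_p[\tfrac{1}{p}]} K(X)[\tfrac{1}{p}]
\]
is a pullback in $\CAlg(\Sp)$. I will apply $\Gm(-)[p^\infty]$ to this square. This functor preserves pullbacks: $\Gm$ is a right adjoint, and $[p^\infty]$ is defined as a fiber involving the smashing localization $(-)\otimes\Sph[\tfrac{1}{p}]$, which commutes with all limits. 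Hence
\[
\Gm(K(X))[p^\infty] \simeq \Gm(K(X)_p)[p^\infty] \times_{\Gm(K(X)_p[\tfrac{1}{p}])[p^\infty]} \Gm(K(X)[\tfrac{1}{p}])[p^\infty].
\]

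The first factor is exactly $\PPic(X)[p^\infty] \oplus \Gm(X)[p^\infty]$ by \Cref{cor:GmKp}, after unpacking the identification $E^{S^1} \simeq E \oplus \Omega E$. By \Cref{lm:primetop}, the other two factors reduce to Eilenberg--MacLane spectra concentrated in degree $0$, namely on the abelian groups $\mu_{p^\infty}(K_0(X)[\tfrac{1}{p}])$ and $\mu_{p^\infty}(K_0(X)_p^\wedge[\tfrac{1}{p}])$. It therefore suffices to verify that the natural map between these two finite groups is an isomorphism; once this is done, the pullback collapses to its first factor, giving the corollary.

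For this final comparison, I will use the hypotheses to write $K_0(X) \simeq H^0(X,\Z) \oplus \widetilde{K}_0(X)$, where $\widetilde{K}_0(X)$ is a nilpotent (square-zero in the principal example $X = \Spec(\O_F[\tfrac{1}{p}])$) ideal controlled by the finite group $\Pic^\heartsuit(X)$. After inverting $p$, this ideal becomes uniquely $p$-divisible torsion, so its elements contribute no $p$-power roots of unity; after $p$-completing and then inverting $p$, it vanishes entirely. A $p$-power root of unity is thus forced to live in the rank part, which is $\Z[\tfrac{1}{p}]$ on one side and $\mathbb{Q}_p$ on the other; both contribute $\{\pm 1\}[p^\infty]$, and the comparison map is the identity.

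The hard part will be this last ring-theoretic step: the required decomposition of $K_0(X)$ and the resulting computation of $\mu_{p^\infty}$ are transparent in the main example $X = \Spec(\O_F[\tfrac{1}{p}])$, but need additional care under the full generality of the corollary's hypotheses to guarantee that the reduced $K_0$ is sufficiently well-behaved (in particular, nilpotent after inverting $p$).
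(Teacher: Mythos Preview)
Your approach is essentially the same as the paper's: apply $\Gm(-)[p^\infty]$ to the arithmetic fracture square, use \Cref{lm:primetop} to reduce the two ``rational'' corners to ordinary abelian groups $\mu_{p^\infty}(K_0(X)[\tfrac{1}{p}])$ and $\mu_{p^\infty}(\pi_0(K(X)_p)[\tfrac{1}{p}])$, check that the map between them is an isomorphism, and conclude via \Cref{cor:GmKp}. Two points deserve correction.

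First, your justification that $[p^\infty]$ preserves the pullback is wrong as written: the smashing localization $(-)\otimes\Sph[\tfrac{1}{p}]$ does \emph{not} commute with all limits (infinite products, for instance). What is true, and what you actually need, is that it commutes with \emph{finite} limits: in $\Sp$ it is exact, and since it is $t$-exact it also commutes with pullbacks in $\Sp^{\mathrm{cn}}$. Hence $E\mapsto E[p^\infty]=\mathrm{fib}(E\to E[\tfrac{1}{p}])$ preserves pullbacks of connective spectra, which suffices.

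Second, the ``hard part'' you flag is not hard under the stated hypotheses, and you should not hedge on it. For $X$ noetherian of Krull dimension $\leq 1$, one has $K_0(X)\cong H^0(X,\Z)\oplus \Pic^\heartsuit(X)$ with $\Pic^\heartsuit(X)$ a square-zero ideal in general, not only in the principal example; this is \cite[Corollaries II.2.6.2 and II.2.6.3]{weibel}, exactly as the paper cites. Combined with finiteness of $\Pic^\heartsuit(X)$ and $K_{-1}(X)=0$, the map $K_0(X)\to\pi_0(K(X)_p)$ is the $p$-completion map $H^0(X,\Z)\oplus\Pic^\heartsuit(X)\to H^0(X,\Z_p)\oplus\Pic^\heartsuit(X)_p$, and after inverting $p$ the comparison of $\mu_{p^\infty}$'s reduces to $\mu_{p^\infty}(\Z[\tfrac{1}{p}])\cong\mu_{p^\infty}(\mathbb Q_p)$ on each connected component, exactly as you sketch.
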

\begin{ex}
    By combining \Cref{ex:ringsofintegers} with finiteness of the class number for rings of $S$-integers in number fields as well as vanishing of negative $K$-theory for those (e.g. by regularity), we find that if $F$ is a number field, $X=\Spec(F)$ and $\Spec(\O_F[\frac{1}{p}])$ satisfy these assumptions. 
\end{ex}
\begin{ex}
    By \Cref{ex:ringsofintegers}, these assumptions are also satisfied by finite fields, and so we recover the calculation of the prime-to-characteristic torsion part of \cite[Theorem 6.2.1]{shacharkiran}. For the non-torsion part (and the at-the-characteristic torsion part), we simply note that all the higher homotopy groups of $K(F)$, for a finite field $F$, are prime-to-characteristic torsion, and hence $\Map(\Z_{(\mathrm{char}(F))}, GL_1(K(F)))= \Map(\Z_{(\mathrm{char}(F))}, GL_1(K_0(F))=  \hom(\Z_{(\mathrm{char}(F))}, \Z/2)$, so we recover the whole calculation this way. 
\end{ex}
\begin{proof}
    Since $X$ is noetherian of dimension $1$, we have by \cite[Corollary II.2.6.3]{weibel} that $K_0(X) \cong H^0(X,\Z)\oplus\Pic^\heartsuit(X)$. It follows from finiteness of $\Pic^\heartsuit(X)$ and vanishing of $K_{-1}(X)$ that the map $K_0(X)\to \pi_0(K(X)_p)$ is simply the $p$-completion map $$H^0(X,\Z)\oplus\Pic^\heartsuit(X)\to H^0(X,\Z_p)\oplus\Pic^\heartsuit(X)_p $$
Upon inverting $p$, it becomes the map $$H^0(X,\Z[\frac{1}{p}])\oplus\Pic^\heartsuit(X)_{p'}\to H^0(X,\mathbb Q_p)$$
where for a finite abelian group $A$, $A_{p'}$ denotes the prime-to-$p$ part of $A$. Note that by \cite[Corollary II.2.6.2]{weibel}, the source of this morphism is a trivial square zero-extension of $H^0(X,\Z[\frac{1}{p}])$ by $\Pic^\heartsuit(X)_{p'}$, so that upon taking $p$-power roots of unity, we get the map $$H^0(X,\mu_{p^\infty}(\Z[\frac{1}{p}]))\to H^0(X,\mu_{p^\infty}(\mathbb Q_p))$$ which is an isomorphism. 

Combining this with \Cref{lm:primetop}, we find that $\Gm(K(X)[\frac{1}{p}])[p^\infty]\to \Gm(K(X)_p[\frac{1}{p}])[p^\infty]$ is an equivalence (of abelian groups). 

Using the arithmetic fracture square: 
\[\begin{tikzcd}
	{K(X)} & {K(X)[\frac{1}{p}]} \\
	{K(X)_p} & {K(X)_p[\frac{1}{p}]}
	\arrow[from=1-1, to=1-2]
	\arrow[from=1-1, to=2-1]
	\arrow[from=1-2, to=2-2]
	\arrow[from=2-1, to=2-2]
\end{tikzcd}\]
we therefore find that $\Gm(K(X))[p^\infty]\to \Gm(K(X)_p)[p^\infty]$ is an equivalence too, so we conclude with \Cref{cor:GmKp}. 
    
\end{proof}

\bibliographystyle{alpha}
\bibliography{Biblio.bib}

\end{document}